\documentclass[12pt,a4paper]{article}
\usepackage[utf8]{inputenc}
\usepackage{amsmath}
\usepackage{amsfonts}
\usepackage{amssymb}
\usepackage{amsthm}
\usepackage{hyperref}
\usepackage[left=1.9cm,right=1.9cm,top=2cm,bottom=2cm]{geometry}
\usepackage{xcolor}
\usepackage[all,cmtip]{xy}
\usepackage{tikz}
\usetikzlibrary{decorations.markings}
\usepackage{enumerate}

\newcommand{\U}{\mathrm{U}}

\newcommand{\mft}{\mathfrak{t}}

\newcommand{\RR}{\mathbb{R}}
\newcommand{\CC}{\mathbb{C}}
\newcommand{\ZZ}{\mathbb{Z}}
\newcommand{\PP}{\mathbb{P}}

\newtheorem{thm}{Theorem}[section]
\newtheorem{prop}[thm]{Proposition}
\newtheorem{cor}[thm]{Corollary}
\newtheorem{lem}[thm]{Lemma}

\theoremstyle{definition}
\newtheorem{rem}[thm]{Remark}
\newtheorem{defn}[thm]{Definition}
\newtheorem{ex}[thm]{Example}

\begin{document}

\title{Realization of GKM fibrations and \\ new examples of Hamiltonian non-K\"ahler actions}
\author{Oliver Goertsches\footnote{Philipps-Universit\"at Marburg, email:
goertsch@mathematik.uni-marburg.de}, Panagiotis Konstantis\footnote{Universität zu Köln,
email: pako@mathematik.uni-koeln.de}, and Leopold
Zoller\footnote{Ludwig-Maximilians-Universit\"at M\"unchen, email: leopold.zoller@mathematik.uni-muenchen.de}}

\maketitle
\begin{abstract}
We classify fibrations of abstract $3$-regular GKM graphs over $2$-regular ones, and show that all fiberwise signed fibrations of this type are realized as the projectivization of equivariant complex rank $2$ vector bundles over quasitoric $4$-folds or $S^4$. We investigate the existence of invariant (stable) almost complex, symplectic, and K\"ahler structures on the total space. In this way we obtain infinitely many K\"ahler manifolds with Hamiltonian non-K\"ahler actions in dimension $6$ with prescribed one-skeleton, in particular with prescribed number of isolated fixed points.\\[.5cm]
\end{abstract}

\tableofcontents

\section{Introduction}

While, as shown by Karshon \cite{Karshon}, any effective Hamiltonian circle action on a compact symplectic $4$-fold with finite fixed point set extends to a toric action, an analogous statement in higher dimensions is no longer true. In fact, Tolman \cite{Tolman} gave the first example of a Hamiltonian $T^2$-action on a compact $6$-dimensional symplectic manifold with finite fixed point set that does not admit an invariant K\"ahler structure. Her proof relied solely on the shape of the image of the moment map, or rather the x-ray which also contains the subpolytopes given by the images of the lower-dimensional orbit type strata. In the toric setting, the moment image contains the entire information of the x-ray and by Delzant's theorem \cite{Delzant} there is a one-to-one correspondence between Delzant polytopes and toric manifolds. If one had variants of this correspondence outside of the toric case, this would in theory enable the construction of Hamiltonian non-K\"ahler actions just by drawing specific x-rays. This is the core idea of the present article.

Our language of choice is not the x-ray but the GKM graph \cite{GKM} which encodes the one-skeleton of the space in a labelled graph. In our setting, which is $T^2$-actions in dimension $6$ with finite fixed point set (in particular, these are actions of complexity one \cite{KarshonTolman} which are not tall), this contains, up to lengths of edges, the same information as the x-ray. GKM graphs have the advantage of not being bound to Hamiltonian actions but rather being able to model arbitrary GKM actions while further geometric structures (almost complex, symplectic or K\"ahler) are reflected in properties of the graph (see Section \ref{sec:defsec}). Regarding beginnings of a Delzant-type correspondence, we proved in \cite{1903.11684v1} that in dimension 6, for GKM actions with connected stabilizers on simply-connected manifolds (these conditions are automatic in the toric setting), the GKM graph does encode the non-equivariant diffeomorphism type.
From this we deduced that Tolman's original example is diffeomorphic to Eschenburg's twisted flag manifold ${\mathrm{SU}}(3)/\!/T^2$ \cite{EschenburgHabil, Eschenburg}, which is the projectivization of a complex $T^2$-equivariant rank $2$ vector bundle over $\CC \PP^2$. This implied in particular that Tolman's example is K\"ahler, although of course not in an equivariant fashion.

From the point of view of GKM theory, the fact that Tolman's example is a projectivized equivariant bundle is reflected in the fact that its GKM graph fibers over the GKM graph of $\CC \PP^2$,
\begin{center}
\begin{tikzpicture}
\draw[very thick] (1,1) -- ++(-3,0) -- ++(4,-4) -- ++(0,3);
\draw[very thick] (2,0) -- ++(-1,1);
\draw[very thick] (2,-3)--++(-1,+2);
\draw[very thick] (1,-1)--++(0,2)++(0,-2)--++(-1,1)--++(2,0);
\draw[very thick] (0,0) -- ++(-2,1);

  \node at (1,1)[circle,fill,inner sep=2pt]{};

  \node at (-2,1)[circle,fill,inner sep=2pt]{};

  \node at (1,-1)[circle,fill,inner sep=2pt]{};

  \node at (0,0)[circle,fill,inner sep=2pt]{};

  \node at (2,0)[circle,fill,inner sep=2pt]{};

  \node at (2,-3)[circle,fill,inner sep=2pt]{};

\draw[->, very thick] (3,-1) -- ++(1.5,0);

\draw[very thick] (5.2,0) -- ++(2,0);
\draw[very thick] (7.2,0) -- ++(0,-2);
\draw[very thick] (7.2,-2) -- ++(-2,2);

  \node at (5.2,0)[circle,fill,inner sep=2pt]{};
  \node at (7.2,0)[circle,fill,inner sep=2pt]{};
  \node at (7.2,-2)[circle,fill,inner sep=2pt]{};

\end{tikzpicture}
\end{center}
see also Example \ref{ex:twflag}. Formally, we make use of the notion of a fibration of abstract GKM graphs, introduced by Guillemin--Sabatini--Zara \cite{GKMFiberBundles}, which we review in Section \ref{sec:GKMfibrations}.

In this paper, we extend this viewpoint on Tolman's example to the more general setting of $6$-dimensional GKM $T^2$-manifolds with an arbitrary (finite) number of fixed points. As stated initially, the purpose here is twofold: on the one hand we contribute to the realization problem of abstract GKM graphs by showing that many GKM fibrations in dimension 6 are in fact realizable. On the other hand we closely investigate how Tolman's original example embeds in this context which uncovers a rich variety of new examples of Hamiltonian non-Kähler actions for any possible number of fixed points.
In a little more detail, our results can be summarized as follows:\\

\noindent {\bf Realization:} We prove that every fiberwise signed GKM fibration with values in $\ZZ^2$ (see Definition \ref{defn:fiberwise signed}) of a 3-regular abstract GKM graph over an effective 2-regular abstract GKM graph can be realized geometrically by a fibration of GKM $T^2$-manifolds (see Theorem \ref{thm:mainthm}). The realization is given as the ($6$-dimensional) projectivization of a rank $2$ complex $T^2$-vector bundle over a $4$-dimensional $T^2$-manifold. The first step to construct the bundle is to do so separately over each invariant two-sphere in the base such that its projectivization is a specific Hirzebruch surface. We proceed to glue those to obtain a bundle over the entire one-skeleton and finally extend the bundle to the whole base with the use of equivariant obstruction theory.\\

\noindent {\bf Geometric Structures:} We go on to show that certain properties of the graphs lead to corresponding geometric structures on the realizations: a signed GKM structure on the base graph lets us choose almost complex realizations and if furthermore the base graph is the boundary of a Delzant polytope, then our realizations are Hamiltonian actions. In the latter case, the $T^2$-invariant symplectic form on the total space also admits a compatible complex structure (in fact the manifold is even projective). However, the complex structure is not necessarily $T^2$-invariant.\\

\noindent {\bf (Non-)Existence of Invariant K\"ahler Structures:} Regarding the $T^2$-invariance of the complex structures which are present in the Hamiltonian case, we prove the following: if the GKM fibration is graph theoretically a M\"obius band and $n-1$ of the $2n$ fixed points ($n\neq 4$) map to the interior of the moment image, then there can not exist a $T^2$-invariant K\"ahler structure (Theorem \ref{thm:signed-classification}). This is done by classifying all possible signed GKM structures that a $T^2$-invariant almost complex structure could induce on the underlying GKM graph and proving individually that they do not come from a K\"ahler action. In combination with the previous results this yields that every fibration of abstract GKM graphs of the the above type gives rise to a $T^2$-action in the spirit of Tolman's original example, i.e.\ a Hamiltonian action with isolated fixed points on a simply-connected compact $6$-dimensional manifold such that no $T^2$-invariant K\"ahler structure exists, while the symplectic form itself is non-equivariantly K\"ahler. Whether the stabilizers of the realizations are connected, as they are in Tolman's example, depends on the respective GKM graph (see Theorem \ref{thm:mainthm}).
Contrary to the Tolman type scenario, in case the GKM fibration is graph theoretically of product type there always exists a $T^2$-invariant complex structure (Section \ref{sec:product type fibrations}).\\

\noindent {\bf Classification of GKM fibrations:} Finally, we quantify the new examples by classifying fiberwise signed GKM fibrations over a fixed
base graph. Up to isomorphism they correspond bijectively to $((\mathbb{Z}-0)^n/\pm)\times
\{0,1\}$ (Proposition \ref{prop:correspondence}). In case $B$ is the boundary of a Delzant polytope
the Hamiltonian non-Kähler actions described above correspond to the elements of the form
$([k_1,\ldots,k_n],1)$ such that $k_i$ and $k_{i-1}$ have the same sign for all but one
$i\in\{1,\ldots,n\}$, where we set $k_0=-k_n$ (Proposition \ref{prop:interior}). In particular every
such tuple in combination with a $2$-dimensional Delzant polytope gives rise to an exotic
Hamiltonian action in the previous sense. We prove that different tuples (up to signs and
permutation) produce examples of different equivariant homotopy type (Cor.\ \ref{cor:distinguish
eq HT}). Additionally we compute the (non-equivariant) cohomology rings as well as the Chern
classes and verify that even in the case of manifolds with $6$ fixed points, which fiber over
$\mathbb{CP}^2$, our method produces infinitely many pairwise not homotopy equivalent examples of
manifolds carrying such an exotic Hamiltonian action (Section \ref{sec:charclasses}).

\paragraph*{Acknowledgements.} We wish to thank Sönke Rollenske for some helpful discussions.

\section{GKM theory and geometric structures}
\label{sec:defsec}

The purpose of this section is to review the basics of GKM theory and see how different geometric structures on manifolds leave their mark on the GKM graph.

\subsection{GKM manifolds}

For an action of a compact torus $T$ on a connected, compact manifold $M$, we consider its fixed point set $M^T=\{p\in M\mid T\cdot p =\{p\}\}$ as well as its \emph{one-skeleton} $M_1=\{p\in M\mid \dim T\cdot p\leq 1\}$. In \emph{GKM theory}, named after Goresky--Kottwitz--MacPherson \cite{GKM} one puts certain assumptions on the action that allow to encode the structure of the one-skeleton in a labelled graph. More precisely, we say that the action \emph{satisfies the GKM conditions} if $M$ is orientable, $M^T$ is a finite set of points, and $M_1$ a finite union of $T$-invariant $2$-spheres.

In this setting, the orbit space of the one-skeleton $M_1/T$ has the structure of a graph $\Gamma$, with one vertex for each fixed point, and one edge for each invariant $2$-sphere. The vertex set of a graph $\Gamma$ will be denoted by $V(\Gamma)$, and the set of edges by $E(\Gamma)$. Formally, we include in the edge set $E(\Gamma)$ of a graph $\Gamma$ each edge twice, once with every possible orientation. For an oriented edge $e\in E(\Gamma)$ we denote its initial vertex by $i(e)$ and its terminal vertex by $t(e)$; the edge $e$ with the opposite orientation will be denoted $\bar{e}$. We write $E(\Gamma)_v$ for the set of edges $e\in E(\Gamma)$ emanating from $v$. At each fixed point $p$ of the action the isotropy representation decomposes into $n$ two-dimensional summands, where $2n$ is the dimension of $M$. The $n$ weights of these irreducible submodules are elements of $\ZZ_\mft^*/\pm 1$, where $\ZZ_\mft^*\subset \mft^*$ is the weight lattice of $T$. Any such weight corresponds uniquely to an invariant $2$-sphere containing $p$, and we put it as a label to the corresponding edge of $\Gamma$. In total, we obtain a map $\alpha:E(\Gamma)\to \ZZ_\mft^*/\pm 1$ which we call an \emph{axial function}, following \cite{GuilleminZaraI}. The graph $\Gamma$, together with the axial function $\alpha$, will be called the \emph{GKM graph} of the $T$-action.

\begin{rem}
Oftentimes, one includes the vanishing of the odd-dimensional cohomology groups of $M$ into the GKM conditions, in order to make the connection between the GKM graph and (equivariant) cohomology. The focus of this paper is the realization of certain GKM graphs and their geometrical properties. All our examples will automatically satisfy this condition.
\end{rem}

Independent of this geometric setting, one can define abstract GKM graphs \cite{GuilleminZaraII}. The graphs one considers have finite vertex and edge sets; we allow multiple edges between vertices, but no loops, i.e., edges that connect a vertex to itself.
\begin{defn}
A \emph{connection} on a graph $\Gamma$ consists of a bijective map $\nabla_e:E(\Gamma)_{i(e)}\to E(\Gamma)_{t(e)}$ for each $e\in E(\Gamma)$, such that
\begin{enumerate}
\item $\nabla_e e = \bar{e}$ and
\item $(\nabla_e)^{-1} = \nabla_{\bar{e}}$ for all $e\in E(\Gamma)$.
\end{enumerate}
\end{defn}
\begin{defn}

An \emph{(abstract) GKM graph} $(\Gamma,\alpha)$ consists of an $n$-valent connected graph $\Gamma$ and a map $\alpha:E(\Gamma)\to \ZZ^m/\pm 1$, called \emph{axial function}, such that there exists a connection $\nabla$ on $\Gamma$ for which the following hold:
\begin{enumerate}
\item For every $v\in V(\Gamma)$ and distinct edges $e,f\in E(\Gamma)_v$ we have that $\alpha(e)$ and $\alpha(f)$ are linearly independent.
\item The connection $\nabla$ is \emph{compatible with $\alpha$}, i.e., for every $v\in V(\Gamma)$ and edges $e,f\in E(\Gamma)_v$ we have
\[
\alpha(\nabla_ef) = \pm \alpha(f) + c\alpha(e)
\]
for some $c\in \ZZ$.
\item For every $e\in E(\Gamma)$ we have $\alpha(\bar{e}) = \alpha(e)$.
\end{enumerate}
\end{defn}
Note that the linear independence of $\alpha(e)$ and $\alpha(f)$ in the first condition, which is defined via preimages under $\ZZ^m\to \ZZ^m/\pm 1$, is well-defined.
\begin{rem}
In the definition of an abstract GKM graph, no acting torus is fixed. Whenever a GKM graph is associated to an action of a torus $T^m$, we identify its Lie algebra $\mft$ with $\RR^m$ in such a way that $\ZZ_\mft^*$ corresponds to $\ZZ^m \subset \RR^m\cong (\RR^m)^*$.
\end{rem}

\begin{rem}
Given an action of a torus $T$ on a connected, compact manifold $M$ satisfying the GKM conditions, the GKM graph of the action admits a compatible connection, see \cite[Proposition 2.3]{GoertschesWiemelerNonNeg}, or \cite{GuilleminZaraII}.
Note that there exist different conventions in the literature of whether the connection is part of the structure of an abstract GKM graph or not. In general, the connection is not unique and there is no canonical choice, which is why we chose to not fix a connection for the abstract object in order to keep the passage from geometry to graphs canonical. Additionally an equivariant diffeomorphism induces a map on graph level (see the notion of isomorphism below) which is not necessarily compatible with specific choices of connections. This is why it is not handy to have connections as part of the abstract data in particular for classification purposes.
\end{rem}

\begin{defn}
Let $(\Gamma,\alpha)$ be a GKM graph and $\Gamma'$ a connected regular subgraph of $\Gamma$. If $(\Gamma',\alpha|_{E(\Gamma')})$ is a GKM graph (i.e.\ it admits a compatible connection) then we call it a GKM subgraph of $(\Gamma,\alpha)$.
\end{defn}

Following \cite{FranzYamanaka} we make the following

\begin{defn}\label{defn:GKM-iso}
An isomorphism $(\Gamma,\alpha)\rightarrow (\Gamma',\alpha')$ between GKM graphs consists of bijections $f$ and $g$ between the vertex and edge sets and an automorphism $\varphi$ of $\mathbb{Z}^m$ such that for any $e\in E(\Gamma)$ we have
\begin{enumerate}
\item $f(i(e))=i(g(e))$
\item $f(t(e))=t(g(e))$
\item $\varphi(\alpha(e))=\alpha'(g(e))$.
\end{enumerate}
\end{defn}

\begin{defn}
We call a GKM graph with labels in $\mathbb{Z}^m$ effective if at one (and hence every) vertex, the labels of the outgoing edges lift to a generating set of $\mathbb{Z}^m$.
\end{defn}

The terminology is justified by the fact that a GKM $T^m$-action on a manifold is effective if and only if the corresponding GKM graph is effective: for any vertex $v$ of the GKM graph, the kernel of the action is given as the intersection of the  kernels of the $\alpha(e)$, where $e$ varies over all edges emanating from $v$. Here, we interpret the $\alpha(e)$ as homomorphisms $T^m\to S^1$.

\subsection{Almost complex structures}

Given a $T$-invariant almost complex structure on $M$, the weights of the isotropy representation have a well-defined sign, i.e., are elements of $\ZZ_\mft^*$. We will speak of the \emph{signed GKM graph} of the action when we consider the graph $\Gamma$ with these weights as labels. Formally, the axial function becomes a map $\alpha:E(\Gamma)\to \ZZ_\mft^*$, by associating to $e$ the weight of the corresponding summand of the isotropy representation at $i(e)$. Abstractly, we define
\begin{defn}
An \emph{abstract signed GKM graph} $(\Gamma,\alpha)$ consists of an $n$-valent connected graph $\Gamma$ and a map $\alpha:E(\Gamma)\to \ZZ^m$, called \emph{axial function},  such that there exists a compatible connection $\nabla$ on $\Gamma$ for which the following hold:
\begin{enumerate}
\item For every $v\in V(\Gamma)$ and distinct edges $e,f\in E(\Gamma)_v$ we have that $\alpha(e)$ and $\alpha(f)$ are linearly independent.
\item The connection $\nabla$ is \emph{compatible with $\alpha$}, i.e., for every $v\in V(\Gamma)$ and edges $e,f\in E(\Gamma)_v$ we have
\[
\alpha(\nabla_ef) = \alpha(f) + c\alpha(e)
\]
for some $c\in \ZZ$.
\item For every $e\in E(\Gamma)$ we have $\alpha(\bar{e}) = -\alpha(e)$.
\end{enumerate}
\end{defn}

\begin{rem}
By composing the axial function of a signed GKM graph with the projection $\mathbb{Z}^m\rightarrow \mathbb{Z}^m/\pm$ one obtains an (unsigned) GKM graph. Note that a connection which is compatible with the signed GKM graph is in particular compatible with the (unsigned) GKM graph. We also call the signed graph a \emph{compatible signed structure} of the underlying GKM graph. There might be different signed structures compatible with a single GKM graph, corresponding to the existence of different homotopy classes of almost complex structures compatible with the action. The existence of a compatible signed structure is clearly an obstruction to the existence of an invariant almost complex structure. E.g.\ it is easy to check that the GKM graph
\begin{center}
\begin{tikzpicture}

\node (a) at (0,0)[circle,fill,inner sep=2pt] {};
\node (b) at (3,0)[circle,fill,inner sep=2pt]{};
\node at (1.5,0.6) {$(1,0)$};
\node at (1.5,-0.6) {$(0,1)$};

\draw (a) to[very thick, in=165, out=15, ] (b);
\draw (a) to[very thick, in=195, out=-15] (b);

\end{tikzpicture}
\end{center}
of the standard $T^2$-action on $S^4\subset \mathbb{C}^2\oplus \mathbb{R}$ does not carry a compatible signed structure.
\end{rem}

The definitions of GKM subgraphs and isomorphisms of GKM graphs carry over to the signed setting in an obvious fashion.

\subsection{Symplectic structures}\label{subsec:introsymplec}

Just as the existence of complex structures impacts the GKM graph of a manifold, the existence of a compatible symplectic structure forces certain properties onto the GKM graph. More precisely, we consider for a Hamiltonian GKM action of a torus $T$ on a manifold $M$ a moment map $\mu\colon M\rightarrow \mathfrak{t}^*$ into the dual of the Lie algebra of $T$. An invariant two sphere $S^2\subset M$ gets mapped under $\mu$ to an affine linear interval in $\mathfrak{t}^*$ whose boundary points are the images of the fixed points. By piecing those together we obtain a linear realisation of the underlying graph of the GKM graph, i.e.\ an edge-wise affine linear map from the topological realization of $\Gamma$ to $\mathfrak{t}^*$ (not an embedding!), whose image is $\mu(M_1)$. If $(\Gamma,\alpha)$ is the signed GKM graph associated to an almost complex structure which is compatible with the symplectic form, then the labels are encoded in the linear realization as follows: if $e$ is an oriented edge, then the slope of the corresponding affine linear segment in $\mathfrak{t}^*$ is given by $\alpha(e)\in \mathbb{Z}_\mft^*\subset \mft^*$. Thus the linear realization determines $(\Gamma,\alpha)$ up to multiples of the weights. The labels are uniquely determined if we add the assumption that they are primitive.

\begin{rem}\label{rem:symplecticobstruction}
Having such a linear realisation is a non-trivial obstruction for a signed GKM graph to come from a Hamiltonian action. For example the signed structures of type III in Theorem \ref{thm:signed-classification} do not since the cone spanned by the weights of the outgoing edges at every point is all of $\mft^*$.

The convexity theorem, which implies that the image $\mu(M)$ of the moment map is the convex hull of $\mu(M_1)$, gives additional obstructions. The preimage of an outer edge of the resulting polytope is contained in $M_1$. Thus every outer edge of the convex hull $\mu(M)$ must be an edge in $\mu(M_1)$. As a counterexample, the linear realization

\begin{center}
\begin{tikzpicture}
\draw[step=1, dotted, gray] (-3.5,-3.5) grid (2.5,2.5);

\draw[very thick] (-2,0) -- ++(2,0) -- ++(0,-1) -- ++ (-1,0) -- ++(0,2) -- ++ (2,0) --++(0,-3)-- ++(-3,0) -- ++ (0,2);

  \node at (-2,0)[circle,fill,inner sep=2pt]{};

  \node at (-1,1)[circle,fill,inner sep=2pt]{};

  \node at (1,1)[circle,fill,inner sep=2pt]{};

  \node at (1,-2)[circle,fill,inner sep=2pt]{};

  \node at (-2,-2)[circle,fill,inner sep=2pt]{};

  \node at (0,-1)[circle,fill,inner sep=2pt]{};
  \node at (-1,-1)[circle,fill,inner sep=2pt]{};

  \node at (0,0)[circle,fill,inner sep=2pt]{};
\end{tikzpicture}

\end{center}
does not have this property (the grid represents the standard basis of $\mathbb{Z}^2$) and it follows from the results in Section \ref{subsec:case II} that, more generally, the signed GKM graph which is uniquely defined by having this realization and primitive weights does not admit any linear realization with the above convexity property. Hence it can not come from a Hamiltonian action.
\end{rem}

Note that the convexity obstruction from the above remark does not concern the inner edges of the linear realization. However, in order for a GKM graph to come from a Hamiltonian $T$-action additional obstructions for inner edges do exist: for any subtorus $T'\subset T$, the action of $T$ on the submanifold $M^{T'}$ is again Hamiltonian, so certain subgraphs of the original GKM graph do again satisfy the convexity criterion explained in Remark \ref{rem:symplecticobstruction}. These subtleties will not play a role in our low dimensional considerations so we refrain from introducing a precise definition outside of the case below. Note that GKM graphs of $4$-dimensional Hamiltonian manifolds do indeed satisfy the following

\begin{defn}
We say that a signed $2$-regular GKM graph with labels in $\mathbb{Z}^m$ is of polytope type if, graph theoretically, it is given by the edges of a simple convex $2$-polytope in $\mathbb{R}^m$ and the labels of the oriented edges are given by integral representatives of the slopes of the edges.
\end{defn}

\subsection{K\"ahler structures}

As a last step in the hierarchy we state a certain property of signed GKM graphs coming from invariant K\"ahler structures which goes beyond the previous obstructions for Hamiltonian actions. In \cite[Lemma 3.5]{Tolman} the following is shown: consider a Hamiltonian $T$-action on $(M,\omega)$ with $\omega$ K\"ahler and a collection $V\subset T_pM$ of irreducible summands of the isotropy representation at a fixed point $p$ such that the weights of those summands form a minimal generating set of a convex cone in $\mft^*$. Then there is a Hamiltonian submanifold of $M$ containing $p$ whose tangent space at $p$ is $V$. As a special case, this implies the following

\begin{cor}\label{cor:Tolmanextension}
For any pair of adjacent edges in the signed GKM graph associated to a Hamiltonian action of GKM type on a Kähler manifold, there is a $2$-regular GKM subgraph of polytope type containing those edges.
\end{cor}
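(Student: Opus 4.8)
The statement is: given a Hamiltonian GKM $T$-action on a Kähler manifold $M$, for any two adjacent edges $e, f \in E(\Gamma)_p$ in the signed GKM graph, there is a $2$-regular GKM subgraph of polytope type containing $e$ and $f$. I need to deduce this from the quoted \cite[Lemma 3.5]{Tolman}, which supplies a Hamiltonian submanifold $N \subset M$ with $p \in N$ and $T_pN = V$ whenever the weights of a collection $V$ of isotropy summands at $p$ form a minimal generating set of a convex cone.

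So here is the plan. First I would take $V \subset T_pM$ to be the sum of the two $2$-dimensional isotropy summands corresponding to $e$ and $f$. By axiom (1) of a signed GKM graph, the weights $\alpha(e)$ and $\alpha(f)$ are linearly independent, hence they are automatically the minimal generating set of the $2$-dimensional convex cone they span in $\mft^*$; thus Tolman's lemma applies and yields a $T$-invariant Hamiltonian submanifold $N$ with $T_pN = V$, in particular $\dim N = 4$. Second, I would observe that a Hamiltonian submanifold is itself a compact symplectic $T$-manifold on which the restricted action is Hamiltonian and, being a union of invariant spheres through fixed points of $M$, still of GKM type (the fixed points of $T$ on $N$ are a subset of $M^T$, hence finite; the one-skeleton of $N$ is a union of invariant $2$-spheres). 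The one-skeleton of $N$ is contained in the one-skeleton of $M$, and a compatible connection for $M$ restricts to one for $N$ on the relevant subgraph; hence the GKM graph $\Gamma'$ of $N$ is a $2$-regular GKM subgraph of $(\Gamma, \alpha)$ in the sense of the definition in the excerpt, and it contains $e$ and $f$ since $T_pN = V$ picks out exactly these two edges at $p$.

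Third, I need that this $2$-regular GKM subgraph is \emph{of polytope type}, i.e.\ graph-theoretically realized by the edges of a simple convex $2$-polytope whose edge slopes are the labels. This is where the $4$-dimensional Hamiltonian theory does the work: by the remark preceding the definition of polytope type in the excerpt, GKM graphs of $4$-dimensional Hamiltonian manifolds are of polytope type. Concretely, the moment map $\mu|_N$ realizes $\Gamma'$ as a linear graph in $\mft^*$; by the Atiyah--Guillemin--Sternberg convexity theorem $\mu(N)$ is the convex hull of the finitely many images $\mu(N^T)$, and in dimension $4$ the one-skeleton maps onto the full boundary of this polygon (every boundary edge of the moment polytope is covered by an invariant sphere, since its preimage consists of points with at most $1$-dimensional orbit). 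Because $\Gamma'$ is $2$-regular and connected, this exhibits it as a single cycle mapping bijectively onto the edges of the convex polygon $\mu(N)$, with the label of each oriented edge equal to an integral representative of the slope of the corresponding segment — exactly the definition of polytope type.

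The main obstacle is the third step: one must be careful that the moment image of $N$ really is a simple polygon and that the one-skeleton of $N$ surjects onto its entire boundary, with each boundary edge hit exactly once; in particular one should rule out that two distinct edges of $\Gamma'$ collapse onto the same segment or that $\mu(N)$ degenerates to a lower-dimensional set. This is handled by effectiveness of the $T$-action on $N$ on the relevant subtorus together with the standard structure theory of Hamiltonian $4$-manifolds, but it is the point that genuinely uses dimension $4$ rather than being a formal consequence of Tolman's lemma. The first two steps are essentially bookkeeping: checking that the hypotheses of \cite[Lemma 3.5]{Tolman} hold for a pair of adjacent edges, and that passing to a Hamiltonian submanifold stays within the GKM world and produces a GKM \emph{sub}graph in the sense defined above.
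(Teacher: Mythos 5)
Your argument is exactly the one the paper intends: the corollary is stated as an immediate consequence of Tolman's Lemma 3.5 applied to the two isotropy summands of the adjacent edges (their weights are linearly independent, hence a minimal generating set of the cone they span), producing a $4$-dimensional Hamiltonian submanifold whose GKM graph is the desired $2$-regular subgraph, and the polytope-type property is the standard fact about $4$-dimensional Hamiltonian GKM manifolds noted just before the definition of polytope type. Your proposal simply fills in the bookkeeping the paper leaves implicit, and does so correctly.
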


An example where this fails while the obstructions for general Hamiltonian actions hold is given by the GKM graph with primitive labels and the linear realization

\begin{center}
\begin{tikzpicture}
\draw[step=1, dotted, gray] (-3.5,-3.5) grid (2.5,2.5);

\draw[very thick] (-2,0) -- ++(2,0) -- ++(0,-1) -- ++ (-1,0) -- ++(0,2) -- ++ (2,0) --++(0,-3)-- ++(-3,0) -- ++ (0,2);

\draw[very thick] (-2,0) -- ++(1,1);
\draw[very thick] (0,0) -- ++(1,1);
\draw[very thick] (-2,-2) -- ++(1,1);
\draw[very thick] (1,-2) -- ++(-1,1);

  \node at (-2,0)[circle,fill,inner sep=2pt]{};

  \node at (-1,1)[circle,fill,inner sep=2pt]{};

  \node at (1,1)[circle,fill,inner sep=2pt]{};

  \node at (1,-2)[circle,fill,inner sep=2pt]{};

  \node at (-2,-2)[circle,fill,inner sep=2pt]{};

  \node at (0,-1)[circle,fill,inner sep=2pt]{};
  \node at (-1,-1)[circle,fill,inner sep=2pt]{};

  \node at (0,0)[circle,fill,inner sep=2pt]{};
\end{tikzpicture}

\end{center}
since the only signed GKM subgraph containing the edges between the three inner vertices is the graph from Remark \ref{rem:symplecticobstruction}. See also Section \ref{subsec:case I} where we will consider this obstruction to the K\"ahler property for a large class of graphs.

\section{GKM fibrations}
\label{sec:GKMfibrations}

Let us review the definition of a GKM fibration, introduced by Guillemin--Sabatini--Zara in \cite{GKMFiberBundles}.

A \emph{morphism} of graphs $\pi:\Gamma\to B$ consists of a map sending vertices of $\Gamma$ to vertices of $B$, as well as a map sending an edge between vertices $p,q\in V(\Gamma)$ with $\pi(p)\neq \pi(q)$ to an edge between $\pi(p)$ and $\pi(q)$. Edges in $\Gamma$ between $p,q\in V(\Gamma)$ with $\pi(p)=\pi(q)$ are called \emph{vertical}; the other edges are called \emph{horizontal}. For $p\in V(\Gamma)$ the set of horizontal edges emanating from $p$ is denoted by $H_p$.
\begin{rem}
In \cite{GKMFiberBundles} morphisms of graphs are defined only on vertices, not on edges. As we allow multiple edges between vertices, we need to specify images of edges as well.
\end{rem}
The morphism $\pi$ is a \emph{graph fibration} if for all $p\in V(\Gamma)$ the map $\pi:H_p\longrightarrow E(B)_{\pi(p)}$ is a bijection. In other words, graph fibrations have a unique path-lifting property: given a base vertex $p\in \Gamma$ and an edge $e\in E(B)$ with $i(e)=\pi(p)$, there exists a unique horizontal edge which lies over $e$ and starts at $p$.

Of course a fibration of GKM graphs should be compatible with the additional structure. There are analogous versions of this notion for the signed and the unsigned case:
\begin{defn}\label{defn:GKMfibration}Let $(\Gamma,\alpha)$ and $(B,\alpha_B)$ be (signed) GKM graphs.
A graph fibration $\pi:\Gamma\to B$ is a \emph{(signed) GKM fibration} if there exist connections $\nabla$ and $\nabla^B$ which are compatible with the (signed) GKM structures on $\Gamma$ and $B$ such that additionally the following hold:
\begin{enumerate}
\item For any edge $e$ of $B$ and any lift $\tilde{e}$ of $e$ we have $\alpha_B(e) = \alpha(\tilde{e})$.
\item For every edge $e$ of $\Gamma$, the connection $\nabla_e$ sends vertical edges to vertical edges (and thus horizontal edges to horizontal edges).
\item For two edges $e,e'$ of $B$ with $i(e) = i(e')$ and lifts $\tilde{e}, \tilde{e}'$ of $e$ and $e'$ with $i(\tilde{e})=i(\tilde{e'})$, the edge $\nabla_{\tilde{e}} \tilde{e}'$ is the lift of $(\nabla^B)_e e'$ at $t(\tilde{e})$.
\end{enumerate}
\end{defn}

\begin{rem}
Note that our definition deviates from that in \cite{GKMFiberBundles} in that we do not fix connections as part of the data of GKM fibrations.
For the sake of completeness we also note that there is the stronger notion of a GKM fiber bundle, which was introduced in \cite{GKMFiberBundles} in the signed case. However, our main interest in this article lies in dimension $6$ and the GKM fibrations we consider will automatically fulfil the stronger requirements of GKM fiber bundles. Thus, there is no need for us to introduce this more restrictive notion.
\end{rem}

The GKM fibrations we can realize geometrically through our main result will have almost complex fibers. However the base will not need to have an almost complex structure. The natural setting for this is given by the following definition which is an intermediate notion between unsigned and signed GKM fibrations.

\begin{defn}\label{defn:fiberwise signed}
Let $\pi\colon(\Gamma,\alpha)\rightarrow (B,\alpha_B)$ be a GKM fibration (of unsigned graphs). Let $F\subset E(\Gamma)$ be the set of vertical edges and $\tilde{\alpha}\colon F\rightarrow \mathbb{Z}^m$ a lift of $\alpha\colon E(\Gamma)\rightarrow \mathbb{Z}^m/\pm$ satisfying $\tilde{\alpha}(e)=-\tilde{\alpha}(\overline{e})$. Then we call $\pi$ together with $\tilde{\alpha}$ a \emph{fiberwise signed fibration} if the connections $\nabla$ and $\nabla^B$ as in Definition \ref{defn:GKMfibration} can be chosen in a way such that $\tilde{\alpha}(\nabla_{e}e')\equiv \tilde{\alpha}(e')\mod \alpha(e)$ for any $e'\in F$ and $e\in E(\Gamma)$.
\end{defn}

\begin{lem}\label{lem:fiberwise signed and signed}
Every signed fibration of signed GKM graphs gives rise to a fiberwise signed fibration of the underlying GKM graphs. Conversely if $(\pi,\tilde{\alpha})$ is a fiberwise signed fibration as above, then any signed structure $(B,\tilde{\alpha}_B)$ compatible with the base graph gives rise to a unique signed structure on $\Gamma$ which extends $\tilde{\alpha}$ such that $\pi$ becomes a signed fibration.
\end{lem}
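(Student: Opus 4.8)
The plan is to verify both directions by tracking how the signs on the axial functions interact with the horizontal/vertical decomposition and the connection.

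\emph{First direction.} Given a signed fibration $\pi\colon(\Gamma,\alpha)\to(B,\alpha_B)$ of signed GKM graphs, I would simply restrict the (signed) axial function $\alpha$ to the set $F$ of vertical edges, obtaining $\tilde\alpha := \alpha|_F$. Then $\tilde\alpha(e) = -\tilde\alpha(\bar e)$ holds by the third axiom of a signed GKM graph, so $\tilde\alpha$ is a legitimate lift of the unsigned vertical labels. For the compatibility with the connection, choose the same connections $\nabla,\nabla^B$ that witness the signed fibration. Axiom (2) of Definition \ref{defn:GKMfibration} says $\nabla_e$ preserves verticality, and signed compatibility of $\nabla$ with $\alpha$ gives $\alpha(\nabla_e e') = \alpha(e') + c\,\alpha(e)$ for vertical $e'$ and some $c\in\ZZ$; reducing mod $\alpha(e)$ (equivalently, mod the unsigned label, since the ideal generated is the same) yields $\tilde\alpha(\nabla_e e')\equiv\tilde\alpha(e')$, which is exactly the fiberwise-signed condition. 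So this direction is a direct unwinding of definitions.

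\emph{Second direction.} Now suppose $(\pi,\tilde\alpha)$ is fiberwise signed and $(B,\tilde\alpha_B)$ is a signed structure lifting $\alpha_B$. I want to produce a signed structure $\tilde\alpha_\Gamma$ on $\Gamma$ extending $\tilde\alpha$ on vertical edges and making $\pi$ a signed fibration. The definition is forced: on a vertical edge set $\tilde\alpha_\Gamma := \tilde\alpha$; on a horizontal edge $\tilde e$ lying over $e\in E(B)$, axiom (1) of a signed fibration forces $\tilde\alpha_\Gamma(\tilde e) := \tilde\alpha_B(e)$, using that every horizontal edge has a well-defined image under $\pi$ and that path-lifting is unique. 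This proves uniqueness immediately; it remains to check it is a well-defined signed GKM graph. Well-definedness of the lift of the unsigned $\alpha$ is clear since both $\tilde\alpha$ and $\tilde\alpha_B$ lift the corresponding unsigned labels. The sign-reversal axiom $\tilde\alpha_\Gamma(\bar{\tilde e}) = -\tilde\alpha_\Gamma(\tilde e)$ holds on vertical edges by hypothesis on $\tilde\alpha$, and on horizontal edges because $\overline{\tilde e}$ lies over $\bar e$ (as $\pi$ is a graph morphism) together with the sign-reversal axiom for $\tilde\alpha_B$ on $B$. Linear independence of labels at a vertex is unchanged since it only depends on the underlying unsigned labels.

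\emph{The main obstacle} is the compatibility of a connection on $\Gamma$ with the \emph{signed} structure $\tilde\alpha_\Gamma$, i.e.\ verifying $\tilde\alpha_\Gamma(\nabla_e f) = \tilde\alpha_\Gamma(f) + c\,\alpha(e)$ with $c\in\ZZ$ (no $\pm$ sign in front of $f$). I would take the connections $\nabla,\nabla^B$ witnessing the fiberwise signed fibration and split into cases according to whether $f$ (and $e$) are horizontal or vertical. If $f$ is vertical: then $\nabla_e f$ is vertical by axiom (2), and the fiberwise-signed hypothesis gives precisely $\tilde\alpha_\Gamma(\nabla_e f)\equiv\tilde\alpha_\Gamma(f)\bmod\alpha(e)$; one must still pin down that the integer $c$ works with the correct sign, but since the unsigned compatibility already gives $\tilde\alpha_\Gamma(\nabla_e f) = \pm\tilde\alpha_\Gamma(f) + c\alpha(e)$ and the mod-$\alpha(e)$ congruence rules out the minus sign, we are done. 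If $f$ is horizontal, say lying over $f_B\in E(B)$ with $i(f_B)=i(e_B)$ where $e$ lies over $e_B$ (the case $e$ vertical being handled by noting $\nabla_e$ fixes the fiber direction): axiom (3) of Definition \ref{defn:GKMfibration} identifies $\nabla_e f$ as the lift of $\nabla^B_{e_B} f_B$, so $\tilde\alpha_\Gamma(\nabla_e f) = \tilde\alpha_B(\nabla^B_{e_B} f_B)$, and signed compatibility of $\nabla^B$ with $\tilde\alpha_B$ downstairs gives $\tilde\alpha_B(\nabla^B_{e_B} f_B) = \tilde\alpha_B(f_B) + c\,\alpha_B(e_B) = \tilde\alpha_\Gamma(f) + c\,\alpha(e)$, using axiom (1) to translate labels back upstairs. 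The only genuinely delicate point is the remaining case in which $e$ is vertical and $f$ horizontal (or both horizontal but not over a common vertex configuration covered by axiom (3)); here I would observe that $\nabla_e$ still sends the horizontal edge $f$ to a horizontal edge over the same base edge (since $\pi\circ\nabla_e = \nabla^B_{\pi(e)}\circ\pi$ on horizontal edges, and $\pi(e)$ is a trivial/degenerate edge when $e$ is vertical), forcing $\nabla_e f = f$ or at worst an edge with the same base label, and conclude via axiom (1) again. Assembling these cases shows $\nabla$ is compatible with $\tilde\alpha_\Gamma$, and axioms (1)–(3) of Definition \ref{defn:GKMfibration} for the \emph{signed} fibration hold by construction, completing the proof.
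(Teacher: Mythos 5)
Your first direction and the uniqueness part of the second direction are correct and coincide with the paper's proof (which treats the first statement as clear and observes that $\tilde\alpha(e)=\tilde\alpha_B(\pi(e))$ is forced on horizontal edges). The gap is in your verification that a compatible connection exists. You work throughout with the connections $\nabla,\nabla^B$ witnessing the \emph{fiberwise signed} structure, but Definition \ref{defn:fiberwise signed} only guarantees that $\nabla^B$ is compatible with the \emph{unsigned} graph $(B,\alpha_B)$; the chosen signed structure $(B,\tilde\alpha_B)$ admits \emph{some} compatible connection, which need not be this $\nabla^B$. A connection compatible with the unsigned labels may satisfy $\alpha_B(\nabla^B_e f)=-\alpha_B(f)+c\,\alpha_B(e)$, which violates signed compatibility; only for a $2$-valent base is the connection unique and the issue vacuous, whereas the lemma is stated for general GKM fibrations. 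So your key step ``signed compatibility of $\nabla^B$ with $\tilde\alpha_B$ downstairs'' assumes something not in the hypotheses. Likewise, in the case $e$ vertical and $f$ horizontal, nothing in Definition \ref{defn:GKMfibration} forces $\nabla_e f$ to lie over the same base edge as $f$ --- axiom (3) only controls transport along horizontal lifts --- so ``forcing $\nabla_e f=f$ or at worst an edge with the same base label'' is unjustified for the original $\nabla$.

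The missing idea, and what the paper actually does, is to \emph{build a new connection} $\nabla'$ on $\Gamma$ rather than reuse $\nabla$ wholesale: keep $\nabla'_e=\nabla_e$ on vertical edges (so the fiberwise congruence still yields signed compatibility there), and on horizontal edges define $\nabla'_e$ as $\pi^{-1}\circ\nabla^{B}_{\pi(e)}\circ\pi$ when $e$ is horizontal --- where now $\nabla^{B}$ is a connection compatible with the \emph{signed} base --- and as $\pi^{-1}\circ\pi$ (the identity on base edges) when $e$ is vertical. With this definition, signed compatibility on horizontal edges is inherited from the base by construction, the two problematic cases above become immediate, and axioms (2) and (3) of Definition \ref{defn:GKMfibration} hold by fiat. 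Your case analysis then goes through essentially verbatim once it is run for $\nabla'$ instead of $\nabla$.
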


\begin{proof}
The first statement is clear. For the second statement, note that an extension of $\tilde{\alpha}$ to $E(\Gamma)$ such that $\pi\colon (\Gamma,\tilde{\alpha})\rightarrow  (B,\tilde{\alpha}_B)$ is a signed fibration is unique: on every horizontal edge $e\in E(\Gamma)$ we need to define $\tilde{\alpha}(e)=\tilde{\alpha}_B(\pi(e))$. So it remains to check the existence of a compatible connection. Let $\nabla$ be a connection on $\Gamma$ as in Definition \ref{defn:fiberwise signed} and $\nabla^B$ be a connection compatible with the signed graph $(B,\tilde{\alpha}_B)$. We define a new connection $\nabla'$ as follows: For any edge $e\in E(\Gamma)$ we set $\nabla'_e$ as $\nabla'_e(e')=\nabla_e(e')$ if $e'$ is vertical. On horizontal edges, we define $\nabla'_e$ as
\[H_{i(e)}\xrightarrow{\pi} B_{i(\pi(e))}\xrightarrow{\nabla^B_\pi(e)} B_{t(\pi(e))}\xrightarrow{\pi^{-1}} H_{t(e)}\]
if $e$ is horizontal and as
\[H_{i(e)}\xrightarrow{\pi} B_{i(\pi(e))}= B_{t(\pi(e))}\xrightarrow{\pi^{-1}} H_{t(e)}\]
if $e$ is vertical. Then $\nabla'$ and $\nabla^B$ satisfy the requirements for connections compatible with signed fibrations.
\end{proof}

\begin{defn}
We call two (signed) GKM fibrations $\pi\colon (\Gamma,\alpha)\rightarrow (B,\alpha_B)$ and $\pi'\colon (\Gamma',\alpha')\rightarrow (B,\alpha_B)$ equivalent if there is an isomorphism $(f,g,\varphi)\colon (\Gamma,\alpha)\rightarrow (\Gamma',\alpha')$
of (signed) GKM graphs as in Definition \ref{defn:GKM-iso} with $\varphi=\mathrm{id}_{\mathbb{Z}^m}$, which respects the decomposition into vertical and horizontal edges and commutes with the fibrations on vertices and horizontal edges.
Two fiberwise signed fibrations $(\pi,\tilde{\alpha})$ and $(\pi',\tilde{\alpha}')$ are called equivalent if there is an equivalence $(f,g,{\mathrm{id}_\mathbb{Z}^m})$ of the underlying GKM fibrations such that additionally $\tilde{\alpha}'(g(e))=\tilde{\alpha}(e)$ for every vertical edge $e\in E(\Gamma)$.
\end{defn}

\section{GKM fibrations in dimension 6}\label{sec:graphs in dim 6}

In this section we consider GKM fibrations $\Gamma\rightarrow B$ where $\Gamma$ is 3-regular, $B$ is $2$-regular and weights (up to sign) are in $\mathbb{Z}^2$ (corresponding to an equivariant fibration of a 6-dimensional $T^2$-manifold over a 4-dimensional $T^2$-manifold). All fibrations will be assumed to be of this form even if not explicitly stated. Note that graph theoretically there is not much variety to what can happen: $B$ is necessarily an $n$-gon (since it is 2-regular and connected).

\begin{defn}
If the lifts of a path around the $n$-gon $B$ are closed in $\Gamma$, then we say $\Gamma$ is of product type. If not, then we say $\Gamma$ is of twisted type.
\end{defn}

\begin{ex}
The following are examples of linear realizations of total spaces of GKM fibrations of twisted type over a $5$- respectively $6$-gon.

\begin{center}
\begin{tikzpicture}
\draw[step=1, dotted, gray] (-3.5,-4.5) grid (3.5,2.5);

\draw[very thick] (-2,0) -- ++(1,1) -- ++(3,0) -- ++(0,-2) -- ++(-2,-2) -- ++(-2,0) -- ++(0,3);
\draw[very thick] (-2,0) -- ++(3,0) -- ++(0,-1) -- ++(-1,-1) -- ++(-1,0) -- +(0,3);
\draw[very thick] (-2,-3)--++(1,1);
\draw[very thick] (1,0)--++(1,1);
\draw[very thick] (1,-1)--++(1,0);

\draw[very thick] (0,-2)--++(0,-1);

  \node at (-2,0)[circle,fill,inner sep=2pt]{};

  \node at (-1,1)[circle,fill,inner sep=2pt]{};

  \node at (2,1)[circle,fill,inner sep=2pt]{};

  \node at (2,-1)[circle,fill,inner sep=2pt]{};

  \node at (0,-3)[circle,fill,inner sep=2pt]{};

  \node at (-2,-3)[circle,fill,inner sep=2pt]{};
  \node at (-1,-2)[circle,fill,inner sep=2pt]{};

  \node at (0,-2)[circle,fill,inner sep=2pt]{};
  \node at (1,-1)[circle,fill,inner sep=2pt]{};
  \node at (1,0)[circle,fill,inner sep=2pt]{};

\end{tikzpicture}
\begin{tikzpicture}
\draw[step=1, dotted, gray] (-3.5,-3.5) grid (3.5,3.5);

\draw[very thick] (-2,0) -- ++(2,2) -- ++(2,0) -- ++(0,-2) -- ++(-2,-2) -- ++(-1,0) -- ++(-1,1) -- ++(0,1);
\draw[very thick] (-2,-1) -- ++(2,0) -- ++(1,1) -- ++(0,1) -- ++(-1,0) -- ++(-1,-1) -- ++(0,-2);
\draw[very thick] (-2,0) -- ++(1,0);
\draw[very thick] (0,1) -- ++(0,1);
\draw[very thick] (1,1) -- ++(1,1);
\draw[very thick] (1,0) -- ++(1,0);
\draw[very thick] (0,-1) -- ++(0,-1);

  \node at (-2,0)[circle,fill,inner sep=2pt]{};

  \node at (0,2)[circle,fill,inner sep=2pt]{};

  \node at (2,2)[circle,fill,inner sep=2pt]{};

  \node at (2,0)[circle,fill,inner sep=2pt]{};

  \node at (0,-2)[circle,fill,inner sep=2pt]{};

  \node at (-1,-2)[circle,fill,inner sep=2pt]{};
  \node at (-2,-1)[circle,fill,inner sep=2pt]{};

  \node at (0,-1)[circle,fill,inner sep=2pt]{};
  \node at (1,0)[circle,fill,inner sep=2pt]{};
  \node at (1,1)[circle,fill,inner sep=2pt]{};

  \node at (0,1)[circle,fill,inner sep=2pt]{};
  \node at (-1,0)[circle,fill,inner sep=2pt]{};

\end{tikzpicture}

\end{center}
\end{ex}

It is not hard to see that the underlying graph of $\Gamma$ is determined up to isomorphism by whether it is of product or of twisted type: it either looks like a M\"obius band or the product of a circle with an interval. However if we add the additional structure of the labels to the picture, the situation becomes more interesting.

\begin{lem}\label{lem:different signed structures}
If $\pi\colon (\Gamma,\alpha)\rightarrow (B,\alpha_B)$ admits a compatible structure
of a fiberwise signed fibration, then there are precisely two possible choices for the lift $\tilde{\alpha}\colon F\rightarrow \mathbb{Z}^2$ of $\alpha$ on vertical edges. Both choices are equivalent as fiberwise signed fibrations.
\end{lem}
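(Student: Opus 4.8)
The plan is to exploit the unique path-lifting property of GKM fibrations to show that a fiberwise signed structure is rigid up to one global sign, and then to realize the two resulting structures as equivalent by the ``fiber-swapping'' automorphism. For the setup, recall that since $B$ is $2$-regular and connected it is an $n$-gon; write its vertices as $v_1,\dots,v_n$ and its edges as $\epsilon_i=\overline{v_iv_{i+1}}$ (indices mod $n$). As $\Gamma$ is $3$-regular, every vertex of $\Gamma$ lies on exactly $3-2=1$ vertical edge, so the fiber over $v_i$ is a single edge, which I fix with an orientation and call $e_i$. A lift $\tilde\alpha\colon F\to\mathbb{Z}^2$ of $\alpha$ then amounts precisely to a choice of sign for each of the $n$ representatives $\tilde\alpha(e_i)$ of $\alpha(e_i)\in\mathbb{Z}^2/\pm$, so a priori there are $2^n$ of them.

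\emph{Rigidity step.} Fix a fiberwise signed structure $\tilde\alpha$ (one exists by hypothesis), together with admissible connections $\nabla,\nabla^B$ as in Definition~\ref{defn:fiberwise signed}. For each $i$ let $h$ be the horizontal lift of $\epsilon_i$ starting at $i(e_i)$; its terminal vertex lies over $v_{i+1}$, and I orient $e_{i+1}$ so that it emanates from $t(h)$. Since $\nabla_h$ is a bijection carrying vertical edges to vertical edges, and the unique vertical edge emanating from $t(h)$ is $e_{i+1}$, we get $\nabla_h e_i=e_{i+1}$ --- a consequence of the definition of a GKM fibration alone, valid for any admissible connection. Hence the fiberwise signed condition forces
\[
\tilde\alpha(e_{i+1})\equiv\tilde\alpha(e_i)\pmod{\alpha(h)},\qquad \alpha(h)=\alpha_B(\epsilon_i).
\]
Now this congruence pins down $\tilde\alpha(e_{i+1})$ among its two lifts once $\tilde\alpha(e_i)$ is known: if both $\pm\tilde\alpha(e_{i+1})$ were congruent to $\tilde\alpha(e_i)$ modulo $\alpha(h)$, then $2\tilde\alpha(e_{i+1})$ would be a multiple of $\alpha(h)$, contradicting the linear independence of $\alpha(e_{i+1})$ and $\alpha(\overline{h})=\alpha(h)$ at the vertex $t(h)$ (a GKM-graph axiom). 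Since the $\epsilon_i$ form a closed walk through all vertices of $B$, iterating this around the $n$-gon shows that $\tilde\alpha$ is determined by its value on $e_1$, so there are at most two fiberwise signed structures. Conversely, negating every congruence (keeping the same connections) shows that $-\tilde\alpha$ is again fiberwise signed, and $-\tilde\alpha\neq\tilde\alpha$ because $\alpha(e_1)\neq 0$ (the axioms force the label of $e_1$ to be linearly independent from that of another edge at $i(e_1)$, which exists as $\Gamma$ is $3$-regular). Hence there are exactly two.

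\emph{Equivalence step.} Let $\sigma$ be the involution of $\Gamma$ swapping the two vertices of each fiber. The two horizontal lifts of every $\epsilon_i$ join the two vertices over $v_i$ to two \emph{distinct} vertices over $v_{i+1}$ (again by bijectivity of $H_p\to E(B)_{\pi(p)}$), so after a coherent labelling of the fibers $\sigma$ exchanges the two lifts of each $\epsilon_i$ and reverses each $e_i$; this works in both the product and the twisted case. By construction $\sigma$ fixes $\pi$ on vertices, preserves the horizontal/vertical splitting, commutes with $\pi$ on horizontal edges, and preserves $\alpha\in\mathbb{Z}^2/\pm$ (on $e_i$ by $\alpha(\overline{e_i})=\alpha(e_i)$; on horizontal edges since both lifts of $\epsilon_i$ carry the label $\alpha_B(\epsilon_i)$). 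Finally $\sigma(e_i)=\overline{e_i}$ gives $(-\tilde\alpha)(\sigma(e_i))=-\tilde\alpha(\overline{e_i})=\tilde\alpha(e_i)$, so $\sigma$ is an equivalence of fiberwise signed fibrations from $(\pi,\tilde\alpha)$ to $(\pi,-\tilde\alpha)$, finishing the argument.

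I expect the two points needing care to be: (i) checking that $\sigma$ is genuinely a graph automorphism compatible with $\pi$, which requires choosing the vertex labellings inside the fibers coherently --- harmless, but it is where the product/twisted distinction must be seen to be irrelevant; and (ii) in the rigidity step, being careful that $\nabla_h e_i=e_{i+1}$ holds for \emph{every} admissible connection, so that two fiberwise signed structures defined via possibly different connections are still forced to agree up to a single global sign.
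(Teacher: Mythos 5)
Your proof is correct and follows essentially the same route as the paper: the congruence $\tilde\alpha(e_{i+1})\equiv\tilde\alpha(e_i)\bmod\alpha_B(\epsilon_i)$ forced by the connection (which must carry the unique vertical edge at $i(h)$ to the unique vertical edge at $t(h)$) propagates the sign choice around the $n$-gon, and the fiber-swapping involution realizes the equivalence of $\tilde\alpha$ and $-\tilde\alpha$. You merely make explicit two points the paper leaves implicit, namely why the congruence pins down the sign (linear independence of adjacent labels rules out $2\tilde\alpha(e_{i+1})$ being a multiple of $\alpha_B(\epsilon_i)$) and why the relevant connection step is independent of the choice of admissible connection.
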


\begin{proof}
Suppose we have two lifts $\tilde{\alpha},\tilde{\alpha}'\colon F\rightarrow \mathbb{Z}^2$ of $\alpha$ where $F\subset E(\Gamma)$ are the vertical edges. Let $e\in E(\Gamma)$ be a horizontal edge. If $e'$ and $e''$ are the unique vertical edges emanating from $i(e)$ and $t(e)$ then a compatible connection $\nabla$ necessarily satisfies $\nabla_e(e')=e''$. Thus $\tilde{\alpha}(e')\equiv \tilde{\alpha}(e'')\mod \alpha(e)$ and $\tilde{\alpha}'(e')\equiv \tilde{\alpha}'(e'')\mod \alpha(e)$. It follows that $\tilde{\alpha}$ and $\tilde{\alpha}'$ either agree or disagree on both, $e'$ and $e''$. Inductively, this extends to all vertical edges. Conversely if $\tilde{\alpha}$ defines a fiberwise signed structure, then $-\tilde{\alpha}$ clearly does as well. An equivalence of $(\pi,\tilde{\alpha})$ and $(\pi,-\tilde{\alpha})$ is given by the isomorphism that interchanges the vertices in each fiber.
\end{proof}

It follows from the lemma above that two fiberwise signed fibrations are equivalent as such if and only if they are equivalent as unsigned GKM fibrations. Thus  equivalence classes of fiberwise signed fibrations naturally form a subset of equivalence classes of (unsigned) GKM fibrations.

\begin{rem} There is an involution on the set of equivalence classes of GKM fibrations:
given $\Gamma\rightarrow B$, choose two basic edges covering the same edge in the base graph. In one of the fibers we detach the edges from their vertex and reglue them but with the fiber vertices interchanged. The labels of the edges stay the same.
This construction is of course self inverse. It maps GKM fibrations which admit the structure of a fiberwise signed fibration to GKM fibrations which do not carry such a structure and vice versa.
\end{rem}

In the light of the above remark, a classification of fiberwise signed fibrations extends to a classification of all GKM fibrations. In view of Lemma \ref{lem:fiberwise signed and signed} it also extends to a classification of signed fibrations over a signed base graph.

\begin{prop}\label{prop:correspondence}
Let $B$ be an effective $2$-valent GKM graph with $n$ vertices. Then there is a bijective correspondence
\[\text{fiberwise signed 3-valent GKM fibrations over B}/\sim\quad\longleftrightarrow\quad ((\mathbb{Z}-0)^n/\pm)\times\{0,1\},\]
where $\sim$ denotes equivalence of fibrations. For a fixed signed structure on $B$ this induces a bijection
\[\text{signed 3-valent GKM fibrations over B}/\sim\quad\longleftrightarrow\quad ((\mathbb{Z}-0)^n/\pm)\times\{0,1\}.\]

\end{prop}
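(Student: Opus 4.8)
The plan is to set up explicit coordinates on the base $n$-gon $B$ and then show that a fiberwise signed fibration is completely determined, up to equivalence, by (a) the collection of "twisting integers" $c_e$ attached to the horizontal edges, recorded up to sign and up to the natural gauge freedom, together with (b) a single $\{0,1\}$-valued invariant detecting product versus twisted type. First I would fix a cyclic labelling of the vertices of $B$ as $v_1,\dots,v_n$ with edges $e_i$ from $v_i$ to $v_{i+1}$ (indices mod $n$), and fix representatives of the axial function $\alpha_B(e_i)$. Given a fiberwise signed fibration $\pi\colon(\Gamma,\alpha)\to(B,\alpha_B)$, effectiveness of $B$ lets me choose, at each vertex $v_i$, a lift to a basis of $\ZZ^2$ of the two outgoing labels, so that the vertical edge label $\tilde\alpha$ over $v_i$ is determined by a single integer (its coordinate transverse to $\alpha_B(e_i)$); by Lemma~\ref{lem:different signed structures} this is well defined up to a global sign. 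The compatibility condition $\tilde\alpha(\nabla_e e')\equiv\tilde\alpha(e')\bmod\alpha(e)$ of Definition~\ref{defn:fiberwise signed} says precisely that, as one walks along the horizontal edges $e_1,\dots,e_n$ of the unique horizontal lift starting at a chosen vertex in the fiber over $v_1$, the vertical label changes by a prescribed integer multiple $k_i$ of $\alpha_B(e_i)$; these $k_i\in\ZZ$ are the data I want to extract, and the fact that the fiber labels $\tilde\alpha$ at each vertex are nonzero (linear independence in the GKM axioms, third condition of the fiberwise signed definition via $\alpha(\overline e)$) forces $k_i\neq 0$.

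Next I would make the twisted/product dichotomy into the $\{0,1\}$ factor. Lifting the closed path $e_1\cdots e_n$ around $B$ starting at one of the two vertices over $v_1$, the endpoint is either the same vertex (product type) or the other vertex in the fiber (twisted type); this is independent of which starting vertex one picks (the two lifts are swapped), so it is a genuine invariant of the equivalence class, giving the element of $\{0,1\}$. To build the map to $((\ZZ-0)^n/\pm)\times\{0,1\}$ I record $([k_1,\dots,k_n],\varepsilon)$; I must check this is well defined, i.e.\ independent of the auxiliary basis choices at each vertex and of the compatible connection. Changing the basis at a single vertex $v_i$ replaces the transverse coordinate of the vertical label by itself plus a multiple of $\alpha_B$, which shifts $k_{i-1}$ and $k_i$ in a way that is absorbed once we pass to the correct equivalence—here I should be a little careful about exactly which quotient of $\ZZ^n$ appears; I expect the honest statement is that after normalizing the vertical labels to be primitive (as in Section~\ref{subsec:introsymplec}) the $k_i$ are rigid and only the global sign (Lemma~\ref{lem:different signed structures}) and the cyclic relabelling of $B$ remain, and that the target should be read accordingly. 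Independence of the connection follows because conditions (2),(3) of Definition~\ref{defn:GKMfibration} pin down $\nabla_e$ on vertical edges at the endpoints of a horizontal edge, which is all that enters the computation of $k_i$, exactly as in the proof of Lemma~\ref{lem:different signed structures}.

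For surjectivity I would, conversely, construct from any $([k_1,\dots,k_n],\varepsilon)$ an explicit $3$-valent graph $\Gamma$: take $2n$ vertices $w_i^\pm$ lying over $v_i$, vertical edges $w_i^+w_i^-$, and horizontal edges from $w_i^{\pm}$ to $w_{i+1}^{\pm}$ for $i<n$ and from $w_n^\pm$ to $w_1^{\pm}$ if $\varepsilon=0$ or to $w_1^{\mp}$ if $\varepsilon=1$; then define $\tilde\alpha$ on the vertical edges by the recursion dictated by the $k_i$ (with the cyclic consistency automatically satisfied because the total twist is realized by the sign change of the label rather than by a constraint on $\sum k_i$), extend $\alpha$ on horizontal edges by pullback of $\alpha_B$, and exhibit the connection as in Lemma~\ref{lem:fiberwise signed and signed}. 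One checks the GKM axioms for $\Gamma$ hold using effectiveness of $B$ and $k_i\neq 0$ (this is what guarantees linear independence of adjacent labels and is the reason $0$ is excluded). Injectivity is then the statement that two fibrations with the same invariant admit an equivalence: build it vertex-by-vertex along the lifted path, using unique path lifting to transport the identification, with the final consistency at $v_n$ guaranteed precisely by equality of the $\varepsilon$-invariants. Finally, the second displayed bijection is immediate from Lemma~\ref{lem:fiberwise signed and signed}: fixing a compatible signed structure $(B,\tilde\alpha_B)$ makes every fiberwise signed fibration into a genuine signed fibration in a unique way, and conversely every signed fibration restricts to a fiberwise signed one, so the two sets are identified and carry the same parametrization.

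The main obstacle I anticipate is bookkeeping the equivalence relation correctly: tracking how the $k_i$ transform under (i) the global fiberwise sign flip, (ii) rechoosing bases at the vertices of $B$ / normalizing to primitive vertical labels, and (iii) cyclic symmetry of the $n$-gon, so that the quotient one lands in is exactly $((\ZZ-0)^n/\pm)\times\{0,1\}$ and not some coarser or finer quotient. Getting the twisted case's sign conventions consistent with the recursion for $\tilde\alpha$ around the full cycle is the delicate point; everything else is a routine unwinding of Definitions~\ref{defn:GKMfibration} and~\ref{defn:fiberwise signed} along the unique horizontal lift of the boundary path of $B$.
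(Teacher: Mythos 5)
Your overall strategy (extract integers from the vertical labels along a horizontal lift of the boundary path, record product/twisted type as the $\{0,1\}$ bit, reverse the construction, and reduce the signed case to the fiberwise signed one via Lemma \ref{lem:fiberwise signed and signed}) matches the paper's. But there is a genuine gap at the heart of the argument: you never pin down which $n$ integers actually classify the fibration, and the two candidates you use interchangeably are not the same. A vertical label $\tilde\alpha_i$ is a vector in $\ZZ^2$, so a priori it carries \emph{two} coordinates in the basis $\{\gamma_{i-1},\gamma_i\}$; the ``coordinate transverse to $\alpha_B(e_i)$'' (the $\gamma_{i-1}$-coefficient $k_i$) does not determine $\tilde\alpha_i$. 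The crucial computation missing from your proposal is that the connection forces the other coefficient to be $-k_{i-1}$: writing $\alpha_i=k_i\gamma_{i-1}+l_i\gamma_i$ and combining $\alpha_{i+1}=\alpha_i+d_i\gamma_i$ with $\gamma_{i+1}=-\gamma_{i-1}+d_i'\gamma_i$ (the sign convention of Remark \ref{rem:correspondencedata}), one finds $l_{i+1}=-k_i$, i.e.\ Equation \eqref{eq:correspondencefiberwiseinbasis}. Only after this does one integer per vertex suffice, and only then does $k_i\neq0$ follow (from linear independence of $\alpha_i$ and $\gamma_i$ at $i(g_i)$, not from nonvanishing of the label). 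Your alternative reading of $k_i$ as ``the integer multiple of $\alpha_B(e_i)$ by which the vertical label changes along $e_i$'' is a different quantity $d_i$; those increments can vanish, do not determine the fibration without also recording $\tilde\alpha_1$, and satisfy a two-dimensional closing constraint around the cycle, so they cannot serve as free coordinates in $(\ZZ-0)^n$.

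Two further points. Normalizing the vertical labels to be primitive, as you suggest, would be fatal: $\alpha_i=k_i\gamma_{i-1}-k_{i-1}\gamma_i$ is typically imprimitive (e.g.\ $k_{i-1}=k_i=2$), and dividing out the content loses exactly the information the bijection records. The quotient really is $((\ZZ-0)^n/\pm)\times\{0,1\}$ with no dihedral identification: equivalences of fibrations cover the identity of $B$, and the correspondence is only asserted relative to the fixed data of Remark \ref{rem:correspondencedata} (the dependence on that data is handled separately after Proposition \ref{prop:isooftotalspaces}). Finally, in the converse direction the cyclic consistency is not automatic; it is precisely the verification $\alpha_n\equiv(-1)^\eta\alpha_1 \bmod \gamma_n$, which only works once the conventions $k_0=(-1)^{\varepsilon_1+\eta}k_n$ and $\gamma_i=(-1)^{\varepsilon_i}\gamma_{i+n}$ are built into the labels.
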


\begin{rem}\label{rem:correspondencedata}
The above correspondence is not canonical and depends on a fixed choice of data in the GKM graph $B$ which we state here separately for later reference. Let $v_1,\ldots,v_n$ be the vertices of $B$ and $e_1,\ldots,e_n$ its edges, where $e_i$ goes from $v_i$ to $v_{i+1}$. We extend the notation for all $i\in\mathbb{Z}$ by setting $v_{i+n}:=v_i$ and $e_{i+n}=e_i$.

If $B$ comes with a signed structure, then we have unique signs for the weights $\gamma_i$ associated to the $e_i$ and we use these to define the correspondence. Without the signed structure there are choices to make:
let $\gamma_1,\gamma_2\in \mathbb{Z}_\mathfrak{t}^*$ be the weights associated to $e_1$ and $e_2$ in $B$, where we choose the signs arbitrarily. Now we choose representatives for the weights $\gamma_i\in \mathbb{Z}_\mft^*$ of all $e_i$ with the unique sign such that\[\gamma_i\equiv -\gamma_{i+2}\mod \gamma_{i+1}.\]
This is possible thanks to the existence of a compatible connection. Again, it turns out handy to extend the notation for all $i\in\mathbb{Z}$ such that $\gamma_i$ and $\gamma_{i+n}$ correspond to the same edge and thus agree up to sign. These will occasionally play a role and will be denoted through the equation $\gamma_i=(-1)^{\varepsilon_i}\gamma_{i+n}$. Note that the value of $\varepsilon_i$ only depends on whether $i$ is even or odd. The $\varepsilon_i$ vanish if and only if the $\gamma_i$ come from a signed compatible structure.

We will define the correspondence explicitly in the course of the proof; briefly, we map
\[
(\pi:\Gamma\to B)\longmapsto ([k_1,\ldots,k_n],\eta),
\]
where $\eta\in \{0,1\}$ describes if $\pi$ is of product or of twisted type, and the numbers $k_i$ are determined by Equation \eqref{eq:correspondencefiberwiseinbasis}, i.e., given as the coefficients of the expansion of the fiber weights in the bases given by the weights of the adjacent horizontal edges.
\end{rem}

\begin{proof}[Proof of Proposition \ref{prop:correspondence}] Note first that the statement on signed fibrations follows directly from the statement on fiberwise signed fibrations with the help of Lemma \ref{lem:fiberwise signed and signed}.

We begin by associating an element on the right hand side to a fiberwise signed GKM fibration $\Gamma\rightarrow B$. The $\{0,1\}$ component is determined by the graph structure of $\Gamma$: we set it to be $0$ if $\Gamma$ is of product type and $1$ if it is of twisted type. The association of the $(\mathbb{Z}-0)^n/\pm$ component depends on the fixed data from Remark \ref{rem:correspondencedata}.
Now choose an orientation of the edge in the fiber over $v_1$ and let $\alpha_1\in \mathbb{Z}_\mft^*$ be the associated weight (with unique sign). A compatible connection allows us to inductively choose orientations for the fiber edges over $v_i$ in a compatible way such that the associated weights satisfy
\[\alpha_i\equiv \alpha_{i+1} \mod \gamma_i,\]
for $i\in\mathbb{Z}$. Note however that if $\Gamma$ is of twisted type, then transporting a vertical edge around $\Gamma$ once reverses its orientation and thus the orientation used for the definition of $\alpha_i$ might differ from the one of $\alpha_{i+n}$. We have $\alpha_i=(-1)^\eta\alpha_{i+n}$.

By assumption, the weights of two adjacent edges in $B$ form a basis of $\mathbb{Z}_\mft^*$. Thus, for $i\in {\mathbb{Z}}$, there are unique integers $k_i,l_i$ such that $\alpha_i=k_{i}\gamma_{i-1}+l_{i}\gamma_i$. We claim that $k_i=-l_{i+1}$. Transporting the vertical edges along the horizontal ones we find integers $d_i,d_i'$, $i=1,\ldots,n$, such that $\alpha_{i+1}=\alpha_i+d_i\gamma_i$ and $\gamma_{i+1}=-\gamma_{i-1}+d_i'\gamma_i$. We obtain $\alpha_i=(k_{i+1}-d_i+l_{i+1}d_i')\gamma_i-l_{i+1}\gamma_{i-1}$. Uniqueness of the $k_i$ and $l_i$ yields the claim. In particular we have
\begin{equation} \label{eq:correspondencefiberwiseinbasis}
\alpha_i= k_i\gamma_{i-1}-k_{i-1}\gamma_i.
\end{equation}
for $i\in {\mathbb{Z}}$. The $(\mathbb{Z}-0)^n/\pm$ component on the right hand side of the correspondence is now defined by the equivalence class of $(k_1,\ldots,k_n)$. Recall that in the construction of the $k_i$ we made a choice for the orientation of the edge over $v_1$ giving rise to $\alpha_1$. A different choice would lead to a global sign change for the $k_i$ so we obtain a well defined element of $(\mathbb{Z}-0)^n/\pm$.
This association is easily seen to be invariant under equivalences of GKM fibrations.

Conversely we check that the construction can be reversed. Given an element on the right hand side of the correspondence, choose a representative $(k_1,\ldots,k_n,\eta)\in(\mathbb{Z}-0)^n\times\{0,1\}$.
If $\eta=1$ let $\Gamma$ be the unique $3$-regular abstract graph of twisted type that fibers over $B$. Otherwise let $\Gamma$ be the unique such graph of product type. We need to construct labels for the edges of $\Gamma$ that turn the graph fibration $\pi:\Gamma\to B$ into a GKM fibration. To do this set $k_0=(-1)^{\varepsilon_1+\eta} k_n$, where $\varepsilon_1$ is defined as in Remark \ref{rem:correspondencedata}. Then we assign labels to $\Gamma$ as follows: the basic edges over $e_i$ are labelled by $\gamma_i$. For the fiber edges, consider a lift of a path that goes around the $n$-gon $B$ once and let $p_i$ be the vertex in that path which lies over $v_i$. To the directed fiber edge emanating from $p_i$ we associate the weight
\[\alpha_i= k_i\gamma_{i-1}-k_{i-1}\gamma_i\]
for $i=1,\ldots,n$. Let $\nabla^B$ be the unique connection on $B$ and define a compatible connection $\nabla$ on $\Gamma$ as follows: if $e\in E(\Gamma)$ is vertical, then $\nabla_e$ is defined as
\[H_{i(e)}\xrightarrow{\pi} B_{i(\pi(e))}= B_{t(\pi(e))}\xrightarrow{\pi^{-1}} H_{t(e)}.\]
Transport along horizontal edges is uniquely defined by the condition that it respects vertical and horizontal edges.
The connections $\nabla$ and $\nabla^B$ are easily seen to be compatible with the labels, the most interesting step being to verify that for a lift $\tilde{e}_n$ of $e_n$ the connection $\nabla_{\tilde{e}_n}$ satisfies the congruence relations for the labels. To do this recall that the orientation of an edge, when transported along the lift of a path around $B$, gets reversed if and only if $\Gamma$ is of twisted type. Thus we need to have $\alpha_n=(-1)^\eta\alpha_1\mod\gamma_n$. The left hand side however is given by \[\alpha_n = k_n\gamma_{n-1}-k_{n-1}\gamma_n\equiv -k_n\gamma_{n+1}\equiv(-1)^{\eta+1} k_0\gamma_1\equiv(-1)^\eta\alpha_1\mod\gamma_n.\]
The connections are also clearly compatible with the fibration.

The equivalence class does not depend on the chosen lift of the path around $B$ as the other lift will result in the same labels but with a global sign change for the $\alpha_i$. By Lemma \ref{lem:different signed structures} these two fibrations are equivalent. The same is accomplished by a global sign change of the $k_i$ so the construction factors through $((\mathbb{Z}-0)^n/\pm)\times\{0,1\}$.
\end{proof}

\tikzset{middlearrow/.style={
        decoration={markings,
            mark= at position 0.5 with {\arrow{#1}} ,
        },
        postaction={decorate}
    }
}

\begin{ex}\label{ex:flag} Consider the $T^2$-equivariant $\CC \PP^1$-fibration ${\mathrm{SU}}(3)/T^2\to {\mathrm{SU}}(3)/{\mathrm{S}}({\mathrm{U}}(2)\times {\mathrm{U}}(1))=\CC \PP^2$. Its GKM fibration is as follows, see \cite[Section 2.1]{GKMFiberBundles}: \\[-.2cm]
\begin{center}
\begin{tikzpicture}

\draw[very thick] (1,1) -- ++(-3,0) -- ++(4,-4) -- ++(0,3);
\draw[middlearrow={>}, very thick] (2,0) -- ++(-1,1);
\draw[middlearrow={>},very thick] (2,-3)--++(-1,0);
\draw[very thick] (1,-3)--++(0,4);
\draw[very thick] (1,-3) -- ++(-3,3) -- ++ (4,0);
\draw[middlearrow={>},very thick] (-2,0) -- ++(0,1);
\node at (-2.4,.5){$\alpha_1$};
\node at (1.8,.7){$\alpha_2$};
\node at (1.5,-3.35) {$\alpha_3$};
  \node at (1,1)[circle,fill,inner sep=2pt]{};

  \node at (-2,1)[circle,fill,inner sep=2pt]{};

  \node at (1,-3)[circle,fill,inner sep=2pt]{};

  \node at (-2,0)[circle,fill,inner sep=2pt]{};

  \node at (2,0)[circle,fill,inner sep=2pt]{};

  \node at (2,-3)[circle,fill,inner sep=2pt]{};

\draw[->, very thick] (3,-1) -- ++(1.5,0);

\draw[middlearrow={>}, very thick] (5.2,0) -- ++(2,0);
\draw[middlearrow={>}, very thick] (7.2,0) -- ++(0,-2);
\draw[middlearrow={>}, very thick] (7.2,-2) -- ++(-2,2);

  \node at (5.2,0)[circle,fill,inner sep=2pt]{};
  \node at (7.2,0)[circle,fill,inner sep=2pt]{};
  \node at (7.2,-2)[circle,fill,inner sep=2pt]{};
  \node at (6.2,.3){$\gamma_1$};
  \node at (7.55,-1){$\gamma_2$};
  \node at (5.9,-1.3){$\gamma_3$};

\end{tikzpicture}
\end{center}
With the indicated fiber orientation, we have $\alpha_1 = \gamma_3 + \gamma_1$, $\alpha_2 = -\gamma_1 - \gamma_2$, and $\alpha_3 = \gamma_2 + \gamma_3$. Thus this fibration corresponds to $([k_1,k_2,k_3],\eta) = ([1,-1,1],1)$.
\end{ex}

\begin{ex}\label{ex:twflag}
Tolman's example \cite{Tolman}, Woodward's variant \cite{Woodward}, and Eschen\-burg's twisted flag manifold ${\mathrm{SU}}(3)//T^2$ \cite{EschenburgHabil, Eschenburg, 1812.09689v1}, which are (non-equivariantly) diffeomorphic by \cite{1903.11684v1},  fiber equivariantly over $\CC \PP^2$. Their associated GKM fibrations are the following:\\[-.2cm]
\begin{center}
\begin{tikzpicture}

\draw[very thick] (1,1) -- ++(-3,0) -- ++(4,-4) -- ++(0,3);
\draw[middlearrow={>}, very thick] (2,0) -- ++(-1,1);
\draw[middlearrow={>},very thick] (2,-3)--++(-1,+2);
\draw[very thick] (1,-1)--++(0,2)++(0,-2)--++(-1,1)--++(2,0);
\draw[middlearrow={>},very thick] (0,0) -- ++(-2,1);
\node at (-.6,.6){$\alpha_1$};
\node at (1.8,.7){$\alpha_2$};
\node at (1.7,-1.7) {$\alpha_3$};
  \node at (1,1)[circle,fill,inner sep=2pt]{};

  \node at (-2,1)[circle,fill,inner sep=2pt]{};

  \node at (1,-1)[circle,fill,inner sep=2pt]{};

  \node at (0,0)[circle,fill,inner sep=2pt]{};

  \node at (2,0)[circle,fill,inner sep=2pt]{};

  \node at (2,-3)[circle,fill,inner sep=2pt]{};

\draw[->, very thick] (3,-1) -- ++(1.5,0);

\draw[middlearrow={>}, very thick] (5.2,0) -- ++(2,0);
\draw[middlearrow={>}, very thick] (7.2,0) -- ++(0,-2);
\draw[middlearrow={>}, very thick] (7.2,-2) -- ++(-2,2);

  \node at (5.2,0)[circle,fill,inner sep=2pt]{};
  \node at (7.2,0)[circle,fill,inner sep=2pt]{};
  \node at (7.2,-2)[circle,fill,inner sep=2pt]{};
  \node at (6.2,.3){$\gamma_1$};
  \node at (7.55,-1){$\gamma_2$};
  \node at (5.9,-1.3){$\gamma_3$};

\end{tikzpicture}
\end{center}
Here we have $\alpha_1 = \gamma_3 - \gamma_1$, $\alpha_2 = -\gamma_1 - \gamma_2$, and $\alpha_3 = -\gamma_2 + \gamma_3$. Thus this fibration corresponds to $([k_1,k_2,k_3],\eta) = ([1,-1,-1],1)$.
\end{ex}

\begin{defn}\label{defn:interiorvertex}
We call a vertex of a $3$-valent signed integer GKM graph an \emph{interior vertex} if the cone spanned by the labels of the three edges emanating from it is equal to $\RR^2$, otherwise it is an \emph{exterior vertex}.
\end{defn}
This notation is motivated by the fact that if we are given a Hamiltonian $T^2$-action with this GKM graph, the momentum image of a fixed point is in the interior of the momentum image if and only if the corresponding vertex of the GKM graph is interior.

\begin{prop}\label{prop:interior}
Let $B$ be a signed $2$-valent GKM graph and $\Gamma\rightarrow B$ the signed GKM fibration associated to $([k_1,\ldots,k_n],\eta)\in((\mathbb{Z}-0)^n/\pm)\times\{0,1\}$ as in Proposition \ref{prop:correspondence}. Then for $i=2,\ldots,n$ the fiber over $v_i$ contains exactly one interior vertex of $\Gamma$ if and only if $k_{i-1}$ and $k_i$ have the same sign. Otherwise both vertices in the fiber are exterior. The fiber over $v_1$ contains exactly one interior vertex if and only if $k_n$ and $(-1)^\eta k_1$ have the same sign. Otherwise both vertices are exterior.
\end{prop}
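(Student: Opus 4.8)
The plan is to compute, at each of the $2n$ vertices of $\Gamma$, whether the cone spanned by the three outgoing edge labels equals $\RR^2$ (Definition \ref{defn:interiorvertex}), thereby reducing the question to a single two--dimensional sign computation.

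First I would record the local picture. Fix $i$ and let $p,q$ be the two vertices of $\Gamma$ lying over $v_i$. Since $\pi$ is a graph fibration, $p$ (and likewise $q$) has a unique horizontal lift of each edge emanating from $v_i$, and by Definition \ref{defn:GKMfibration}(1) together with the signed relation $\alpha(\bar e)=-\alpha(e)$ these two horizontal edges carry the labels $\gamma_i$ (the forward lift of $e_i$) and $-\gamma_{i-1}$ (the lift of $\overline{e_{i-1}}$). The vertical edge between $p$ and $q$ carries the label $\alpha_i$ at one of the two vertices and $-\alpha_i$ at the other, where $\alpha_i=k_i\gamma_{i-1}-k_{i-1}\gamma_i$ by \eqref{eq:correspondencefiberwiseinbasis}. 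As $e_{i-1}$ and $e_i$ are adjacent edges of $B$, their labels are linearly independent, so $\{-\gamma_{i-1},\gamma_i\}$ is a basis of $\RR^2$, and in it $\alpha_i=(-k_i)(-\gamma_{i-1})+(-k_{i-1})\gamma_i$ and $-\alpha_i=k_i(-\gamma_{i-1})+k_{i-1}\gamma_i$, both coordinates being nonzero since $k_{i-1},k_i\neq 0$.

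The computation rests on the elementary observation that for a basis $\{u,w\}$ of $\RR^2$ and $v=au+bw$ with $a,b\neq 0$ one has $\mathrm{cone}\{u,w,v\}=\RR^2$ if and only if $a<0$ and $b<0$: after normalizing $u=(1,0)$, $w=(0,1)$ this is immediate, since if $a>0$ (resp.\ $b>0$) all three vectors have nonnegative first (resp.\ second) coordinate and the cone is contained in a proper half--plane, while if $a,b<0$ the origin lies in the interior of the convex hull of $u,w,v$, so the cone is all of $\RR^2$. Applying this with $u=-\gamma_{i-1}$, $w=\gamma_i$, the vertex carrying $\alpha_i$ is interior precisely when $k_i>0$ and $k_{i-1}>0$, and the vertex carrying $-\alpha_i$ precisely when $k_i<0$ and $k_{i-1}<0$. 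These two conditions are mutually exclusive, so for $i=2,\dots,n$ the fiber over $v_i$ contains exactly one interior vertex when $k_{i-1}$ and $k_i$ have the same sign, and otherwise both of its vertices are exterior, which is the assertion.

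For the fiber over $v_1$ the computation is identical once the wrap--around conventions of the proof of Proposition \ref{prop:correspondence} are used: the backward horizontal label is now $-\gamma_n$, and $\alpha_1=k_1\gamma_0-k_0\gamma_1$ with $\gamma_0=\gamma_n$ and $k_0=(-1)^{\eta}k_n$ (here $\varepsilon_1=0$ because $B$ carries a signed structure, in the notation of Remark \ref{rem:correspondencedata}), so that $\alpha_1=(-k_1)(-\gamma_n)+(-(-1)^{\eta}k_n)\gamma_1$ in the basis $\{-\gamma_n,\gamma_1\}$. As before, the fiber over $v_1$ has exactly one interior vertex iff $k_1$ and $(-1)^{\eta}k_n$ have the same sign, equivalently iff $k_n$ and $(-1)^{\eta}k_1$ do, and otherwise both vertices are exterior. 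I expect the $i=1$ bookkeeping --- orienting the lift over $e_n$ correctly, importing $k_0=(-1)^{\varepsilon_1+\eta}k_n$ with $\varepsilon_1=0$, and remembering that one loop around the base reverses the chosen vertical orientation (the factor $(-1)^{\eta}$) --- to be the only delicate point; everything else reduces to the cone observation above.
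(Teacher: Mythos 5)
Your proof is correct and follows essentially the same route as the paper's: identify the three outgoing labels at each vertex of the fiber as $\{-\gamma_{i-1},\gamma_i,\pm\alpha_i\}$, expand $\pm\alpha_i$ in the basis $\{-\gamma_{i-1},\gamma_i\}$, and apply the observation that the cone spanned by a basis $u,w$ and $au+bw$ is all of $\RR^2$ iff $a,b<0$, with the same $\varepsilon_1=0$ bookkeeping at the wrap-around fiber. The only difference is that you supply a short justification of the cone criterion, which the paper states without proof.
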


\begin{proof}
Let $p_i$ and $q_i$ be the vertices in the fiber over $v_i$ for $i\in\{1,\ldots,n\}$. Assume that $\gamma_i$ has been chosen as the weight of the directed edge from $v_i$ to $v_{i+1}$.
Then without loss of generality the set of weights of the edges emanating from $p_i$ and $q_i$ are \[\{-\gamma_{i-1},~\gamma_i,~k_i\gamma_{i-1}-k_{i-1}\gamma_i\}\quad\text{and}\quad\{-\gamma_{i-1},~\gamma_i,~-k_i\gamma_{i-1}+k_{i-1}\gamma_i\}.\]
In general, if $e_1,e_2\in\mathbb{Z}^2$ is a basis then the cone spanned by $e_1$, $e_2$, and $ae_1+be_2$ is $\RR^2$ if and only if $a,b<0$. Thus if $k_{i-1}$ and $k_i$ have the same sign, then exactly one of the set of weights of $p_i$ and $q_i$ has this property. For the statement on $v_1$ recall that in the construction of the fibration we had $k_0=(-1)^{\eta+\varepsilon_1} k_n$. Since $B$ is signed, it follows that $\varepsilon_1=0$, hence $k_0$ and $k_1$ have the same sign if and only if $k_n$ and $(-1)^\eta k_1$ do.
\end{proof}

\begin{cor}\label{cor:nrofintvertices}
Let $\Gamma\rightarrow B$ be a fibration of signed GKM graphs of twisted type as above, where the $2$-valent GKM graph $B$ is effective and has $n$ vertices. If $n$ is odd, then the number of interior vertices of $\Gamma$ is an even number between $0$ and $n-1$. If $n$ is even, then it is an odd number between $1$ and $n-1$.
\end{cor}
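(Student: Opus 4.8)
The plan is to read off the statement directly from Proposition~\ref{prop:interior} and then reduce it to the elementary fact that a cyclic sequence of signs has an even number of sign changes. Since $\Gamma\to B$ is of twisted type we have $\eta=1$, and since $B$ carries a signed structure we have $\varepsilon_1=0$ (as observed in the proof of Proposition~\ref{prop:interior}). With these values, Proposition~\ref{prop:interior} says: for $i=2,\dots,n$ the fiber over $v_i$ contains exactly one interior vertex precisely when $k_{i-1}$ and $k_i$ have the same sign, and no interior vertex otherwise; and the fiber over $v_1$ contains exactly one interior vertex precisely when $k_n$ and $-k_1$ have the same sign, i.e.\ when $k_n$ and $k_1$ have \emph{opposite} signs, and no interior vertex otherwise. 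Writing $s_i\in\{\pm1\}$ for the sign of $k_i$, the total number $N$ of interior vertices of $\Gamma$ is therefore
\[
N=\#\{\,i\in\{2,\dots,n\}\mid s_{i-1}=s_i\,\}+[\,s_n\neq s_1\,],
\]
where $[\,\cdot\,]$ denotes the indicator of a condition.

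Next I would rewrite this in terms of sign changes. Put $D=\#\{\,i\in\{2,\dots,n\}\mid s_{i-1}\neq s_i\,\}$, so that the first summand equals $(n-1)-D$ and hence $N=(n-1)-D+[\,s_n\neq s_1\,]$. Now $D+[\,s_n\neq s_1\,]$ is precisely the number of sign changes of the cyclic word $s_1s_2\cdots s_n$, and this number is even for any cyclic sequence with values in $\{\pm1\}$. Consequently $N\equiv(n-1)-\bigl(D+[\,s_n\neq s_1\,]\bigr)\equiv n-1\pmod 2$, which is the parity assertion: $N$ is even when $n$ is odd and odd when $n$ is even.

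For the range: $N\ge0$ is trivial, and $N=n$ is impossible, since it would force all consecutive pairs among $s_1,\dots,s_n$ to agree, hence $s_1=\dots=s_n$ and in particular $s_n=s_1$, contradicting $s_n\neq s_1$; thus $0\le N\le n-1$. Combining with the parity statement, $N$ is an even number in $\{0,\dots,n-1\}$ when $n$ is odd, and an odd number in $\{1,\dots,n-1\}$ when $n$ is even. There is no real obstacle here: the corollary is a direct counting consequence of Proposition~\ref{prop:interior}, and the only point requiring a moment's care is feeding the values $\eta=1$ and $\varepsilon_1=0$ correctly into its wrap-around case.
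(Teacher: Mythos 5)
Your proof is correct and follows exactly the route the paper intends: the corollary is stated without proof as an immediate consequence of Proposition~\ref{prop:interior}, and your argument is precisely that deduction spelled out, reducing the count to the evenness of the number of sign changes in the cyclic word $s_1\cdots s_n$. The handling of the wrap-around case via $\eta=1$ and $\varepsilon_1=0$ matches the paper's conventions.
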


\section{Realization of GKM fibrations}\label{sec:geometric realization}

The following is the main theorem of this paper. As before, $T=T^2$ is a two-dimensional compact torus with Lie algebra $\mft$.

\begin{thm} \label{thm:mainthm}
\begin{enumerate}
\item Let $\pi:\Gamma\to B$ be a fiberwise signed GKM fibration, where $\Gamma$ is an $3$-valent integer GKM graph and $B$ an effective $2$-valent integer GKM graph (both with respect to $\mathbb{Z}_\mft^*$). Then $\pi$ is geometrically realized as the projectivization $\PP(E)$ of a $T$-equivariant complex vector bundle $E\to X$ over a four-dimensional $T$-manifold $X$ which can be taken to be $S^4$ if $n=2$ and quasitoric if $n\geq 3$. Furthermore, $\PP(E)$ and $X$ have $T$-invariant stably almost complex structures compatible with the fibration. The realization $\PP(E)$ has the property that all its isotropy groups are connected if and only if, in the notation of Proposition \ref{prop:correspondence} and Remark \ref{rem:correspondencedata}, the fiberwise signed GKM fibration $\pi$ corresponds to $([k_1,\ldots,k_n],\eta)$, with all $k_i=\pm 1$.

\item If $\pi\colon \Gamma\to B$ is a fibration of signed GKM graphs then its geometrical realization as in 1.\ can be chosen to be a fibration of almost complex manifolds such that the induced fibration of signed GKM graphs is precisely $\pi$.

\item If, additionally, $B$ is the boundary of a two-dimensional Delzant polytope (i.e., $X$ can be chosen as a four-dimensional toric manifold), then any realization $\PP(E)$ as in 1.\ admits both a K\"ahler structure and a $T$-invariant symplectic structure such that $\PP(E)\rightarrow X$ is a (holomorphic) symplectomorphism with respect to both structures on $\PP(E)$ and a $T$-invariant K\"ahler structure on $X$. Moreover the underlying symplectic form of the Kähler structure and the invariant symplectic form on $\PP(E)$ are symplectomorphic.
\end{enumerate}
\end{thm}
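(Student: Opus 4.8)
The plan is to prove the three parts in sequence, each building on the construction from the previous ones. The heart of the matter is Part 1: the explicit construction of the equivariant rank-$2$ bundle $E \to X$ whose projectivization realizes $\pi$.

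\textbf{Part 1 (Realization).} First I would build the base manifold $X$. Since $B$ is an effective $2$-valent GKM graph it is graph-theoretically an $n$-gon; for $n = 2$ take $X = S^4$ with the standard $T^2$-action (whose GKM graph is the $2$-gon, as in Section \ref{sec:defsec}), and for $n \geq 3$ invoke the theory of quasitoric manifolds: the data of the $n$ edge-weights $\gamma_1, \dots, \gamma_n$ (with the sign conventions of Remark \ref{rem:correspondencedata}) defines a combinatorial quasitoric datum over the $n$-gon, hence a $4$-dimensional quasitoric manifold $X$ whose GKM graph is $B$. Next, using Proposition \ref{prop:correspondence}, encode $\pi$ by $([k_1, \dots, k_n], \eta)$. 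The bundle is constructed in three stages, exactly as sketched in the introduction. (i) Over each invariant $2$-sphere $S_i \subset X$ (the preimage of edge $e_i$), which is a $\mathbb{CP}^1$ with a linear $T$-action, build an equivariant rank-$2$ bundle $E_i \to S_i$ so that $\mathbb{P}(E_i)$ is the Hirzebruch surface whose GKM graph is the corresponding $2$-regular subgraph of $\Gamma$ — concretely $E_i = L_i \oplus L_i'$ for equivariant line bundles whose weights at the two fixed points are read off from $\alpha_i = k_i \gamma_{i-1} - k_{i-1} \gamma_i$ and its transport. (ii) Glue these over the fixed points of $X$: at each vertex $v_i$ the fiber is a $T$-representation $\mathbb{C}^2$, and the compatibility of the GKM fibration (condition (1) of Definition \ref{defn:GKMfibration} together with the connection conditions) guarantees that the line-bundle weights match up, so the $E_i$ glue to a $T$-equivariant bundle $E^{(1)} \to X_1$ over the one-skeleton. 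The twisted-versus-product dichotomy ($\eta$) is exactly whether going once around the $n$-gon the gluing returns to the same splitting or swaps the two line summands; in the twisted case $E^{(1)}$ is not a sum of two line bundles globally, only locally. (iii) Extend $E^{(1)}$ from $X_1$ to all of $X$ by equivariant obstruction theory: the obstructions live in $H^{k+1}_T(X, X_1; \pi_k(BU(2)))$ or the relevant Bredon cohomology groups, and since $X$ is $4$-dimensional with $X/X_1$ built out of cells of dimension $\geq 3$ while $BU(2)$ is $1$-connected, the obstruction groups vanish (and the extension is unique up to equivariant homotopy). Then $\mathbb{P}(E) \to X$ is the desired realization; checking that its GKM graph with the fiberwise sign is $\pi$ is a matter of comparing the isotropy weights at the $2n$ fixed points, which by construction are $\{-\gamma_{i-1}, \gamma_i\} \cup \{\pm(k_i\gamma_{i-1} - k_{i-1}\gamma_i)\}$, matching Proposition \ref{prop:interior}'s computation. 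The stably almost complex structure on $\mathbb{P}(E)$ comes from $T_{\mathrm{fib}}\mathbb{P}(E) \oplus \pi^* TX$ together with the stably complex structure of the quasitoric (or $S^4$) base. Finally, the connectedness-of-isotropy-groups criterion: an isotropy group of $\mathbb{P}(E)$ is connected iff the corresponding weight triple spans the weight lattice over $\mathbb{Z}$ (not just over $\mathbb{Q}$), and since $\{\gamma_{i-1}, \gamma_i\}$ is already a lattice basis, the condition becomes that $k_i\gamma_{i-1} - k_{i-1}\gamma_i$ together with any one of the other two still spans — this holds for all the relevant sublattices precisely when every $|k_i| = 1$.

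\textbf{Part 2 (Signed fibrations).} Given a signed structure on $B$, Lemma \ref{lem:fiberwise signed and signed} upgrades the fiberwise signed $\pi$ to a genuine signed fibration $(\Gamma, \tilde\alpha) \to (B, \tilde\alpha_B)$. On the geometric side, a signed structure on $B$ amounts to choosing a $T$-invariant (stably) almost complex structure on $X$; then one arranges the almost complex structure on $\mathbb{P}(E)$ so that the isotropy weights acquire exactly the prescribed signs. Since the fiberwise almost complex structure on the $\mathbb{CP}^1$-fibers is already canonical (it is a complex projective bundle) and the base almost complex structure supplies the horizontal signs, the induced signed GKM graph of $\mathbb{P}(E) \to X$ is $\pi$ by construction — this is essentially bookkeeping once Part 1 is in place.

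\textbf{Part 3 (Kähler/symplectic structures).} If $B = \partial \Delta$ for a Delzant polygon $\Delta$, then Delzant's theorem (\cite{Delzant}) gives $X$ as a projective toric manifold with a $T$-invariant Kähler form $\omega_X$. For the Kähler structure on $\mathbb{P}(E)$: equip $E$ with a $T$-invariant Hermitian metric, form the associated Fubini–Study form along the fibers, and add a large multiple of $\pi^*\omega_X$; standard arguments (e.g. the projective bundle formula, or realizing $E$ as a sum/twist of very ample equivariant line bundles and embedding $\mathbb{P}(E) \hookrightarrow \mathbb{P}^N$) show $\mathbb{P}(E)$ is projective, hence Kähler, and that $\pi$ is holomorphic with respect to the complex structure. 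For the $T$-invariant symplectic structure: the fiberwise Fubini–Study form is $T$-invariant and its fiberwise-Hamiltonian nature, combined with $\pi^*\omega_X$, yields (by the Thurston/Weinstein construction for symplectic fibrations with compact fiber) a $T$-invariant symplectic form $\omega$ on $\mathbb{P}(E)$ making $\pi$ a symplectic fibration; $T$-invariance and the existence of a moment map follow since the construction is equivariant and $\omega_X$ is Hamiltonian. That both symplectic forms on $\mathbb{P}(E)$ — the one underlying the Kähler structure and the invariant one — are symplectomorphic follows from Moser's theorem once one checks they are deformation-equivalent through symplectic forms: interpolate linearly, noting both are of the shape $\pi^*\omega_X + (\text{fiberwise FS form})$ with the fiberwise part varying in a contractible family of fiberwise-symplectic forms, so the interpolant stays symplectic provided the $\pi^*\omega_X$ coefficient is taken large (uniformly along the path), which it can be.

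\textbf{Main obstacle.} The delicate point is step (ii)–(iii) of Part 1: the gluing of the local Hirzebruch models over the one-skeleton into a consistent equivariant bundle, and the verification that the resulting bundle's GKM fibration is genuinely $\pi$ (not some other fibration with the same underlying graph). This is where the full strength of the GKM-fibration axioms — particularly the connection-compatibility condition (3) of Definition \ref{defn:GKMfibration}, which controls how the vertical-edge weights propagate around the polygon — must be matched against the geometric clutching functions of the bundle; the parity obstruction distinguishing product from twisted type ($\eta$, equivalently the sign $(-1)^\eta$ in the relation $\alpha_n = (-1)^\eta \alpha_1 \bmod \gamma_n$ from the proof of Proposition \ref{prop:correspondence}) is exactly the monodromy of this clutching and must be tracked carefully. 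The obstruction-theoretic extension in (iii) is comparatively soft, relying only on dimension and connectivity of $BU(2)$.
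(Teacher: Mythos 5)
Your overall architecture coincides with the paper's: realize $B$ by $S^4$ or a quasitoric $4$-manifold, build Hirzebruch-type models $E_i$ over the invariant spheres, glue them over the fixed points (with the swap automorphism of $\CC^2$ accounting for the twisted type), extend over $X$ by equivariant obstruction theory, and then produce the (stably) almost complex, symplectic and K\"ahler structures. However, two of your steps rest on arguments that fail as stated. The first is the extension over $X\setminus X_1$, which you call ``comparatively soft.'' You claim the obstructions vanish because the relative cells of $(X,X_1)$ have dimension at least $3$ and $B\U(2)$ is $1$-connected. This is doubly wrong: an ordinary CW structure on $(X,X_1)$ necessarily contains relative cells of dimensions $2$ and $3$ (the pair $(T^2\times D^2,\,T^2\times S^1)$ has product cells $e^j\times e^2$ with $j=0,1,1,2$), and even for cells of dimension $\geq 3$ the obstructions lie in $\pi_{\geq 2}(B\U(2))$, where $\pi_2(B\U(2))\cong\ZZ\neq 0$. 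The argument that actually works, and the one the paper uses, is that $(X,X_1)$ is obtained by attaching a \emph{single free} $T$-cell $T^2\times D^2$, so the equivariant extension problem reduces to extending a map $S^1=\partial D^2\to B(T^2,\U(2))$ over $D^2$; since $B(T^2,\U(2))\simeq B\U(2)$ is simply connected, the obstruction in $\pi_1$ vanishes. (Your parenthetical uniqueness claim is also false, since $\pi_2(B\U(2))\cong\ZZ$ classifies the extensions over the free cell, but uniqueness is not needed.)

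The second gap is in Part 3: to obtain a K\"ahler structure on $\PP(E)$ with $\pi$ holomorphic you must first endow the merely smooth equivariant bundle $E$ with a holomorphic (algebraic) structure, and neither of your suggested routes does this --- the projective bundle formula presupposes holomorphicity, and writing $E$ as a sum of very ample line bundles is impossible globally in the twisted case. The paper's argument is Schwarzenberger's theorem that a topological rank-$2$ bundle on a smooth projective surface is algebraic if and only if $\det E$ is, combined with $H^2(X)=H^{1,1}(X)$ for toric $X$ (Lefschetz $(1,1)$) and GAGA; without this input your form $C\pi^*\omega_X+(\text{fiberwise Fubini--Study})$ is symplectic but there is no reason for it to be K\"ahler. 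Two smaller points: for the direction ``all $k_i=\pm1$ implies connected stabilizers'' the local lattice condition at the fixed points is not a priori sufficient, and the paper instead reduces all nontrivial isotropy to the $4$-dimensional pieces $\PP(E_i)$ over $X_1$ and invokes Masuda--Panov's connectedness theorem for torus manifolds; and in the Moser argument one should note explicitly that averaging does not change the de Rham class, so that the interpolating symplectic forms are cohomologous, as Moser's trick requires.
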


\begin{rem}\label{rem:mainthm}
In the third part of the above theorem note that the $T$-invariant symplectic form on $\PP(E)$ does admit a compatible complex structure since it is symplectomorphic to a Kähler form. However, this complex structure will not necessarily be compatible with the $T$-action since the K\"ahler form and the symplectomorphism are not. In fact, we will show in Section \ref{sec:nonkaehler} that in the case of a twisted type fibration with the maximal number of interior fixed points, the compatible complex structure can never be $T$-invariant. On the contrary, we will show in Section \ref{sec:product type fibrations} that for product type fibrations we always obtain a $T$-invariant K\"ahler structure on $\PP(E)$. The example of the standard flag manifold $\U(3)/T^3$ shows that such structures can also exist in the twisted type case. However it is not clear whether they always exist outside of the case with maximal number of interior fixed points.
\end{rem}

We will prove the theorem in this section and the following. In Section \ref{sec:geometric realization} we construct the vector bundle $E$, see Theorem \ref{thm:vectorbundle} below. The statements on the geometric structures on $\mathbb{P}(E)$ are proved in Section \ref{S:GeometricStructures}.

\subsection{Realization in dimension 4}



As a starting point, we need to geometrically realize the base graph of the fibration which corresponds to a $2$-dimensional torus action on a $4$-manifold. Actions of tori of dimension half the dimension of the manifold are quite well-studied so we can draw on the existing theory of quasitoric manifolds.

\begin{prop}\label{prop:dim4realization}
Let $B$ be an effective $2$-regular GKM graph with $n$ vertices. Then:
\begin{enumerate}[(i)]
\item If $n=2$, then $B$ is the GKM graph of a $T^2$-action on $S^4$. For $n\geq 3$, $B$ is the GKM graph of a $4$-dimensional quasitoric manifold $X$.
\item If $B$ has a compatible structure of a signed GKM graph then $X$ can be chosen such that it carries a $T$-invariant almost complex structure.
\item If $B$ is of polytope type, then $X$ can be chosen a toric manifold.
\end{enumerate}
\end{prop}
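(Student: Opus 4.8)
The plan is to treat the three cases separately, since the structure of a $2$-regular effective GKM graph $B$ on $n$ vertices is that of an $n$-gon with a compatible connection, and the combinatorics are exactly those encoded by a (generalized) characteristic function.

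\textbf{Step 1: The case $n=2$.} Here $B$ is the graph with two vertices joined by two edges labelled (up to sign) by $\gamma_1,\gamma_2$; effectiveness says $\gamma_1,\gamma_2$ form a basis of $\mathbb{Z}_\mft^*$, and compatibility of the connection forces (up to sign) $\gamma_2\equiv -\gamma_1\bmod\gamma_1$, which is automatic, but more importantly the congruence at \emph{both} vertices pins the labels down so that the graph is, after an automorphism of $\mathbb{Z}^2$, exactly the GKM graph of the standard linear $T^2$-action on $S^4\subset\mathbb{C}^2\oplus\mathbb{R}$ (weights $(1,0)$ and $(0,1)$, with no sign being consistent -- this matches the remark in Section~\ref{sec:defsec} that this $S^4$-graph carries no compatible signed structure, consistent with item (ii) being vacuous here). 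I would spell this out by writing down the two possible connections and checking the label congruences force the edge labels to be a $\mathbb{Z}$-basis at each vertex with the two vertices ``antipodal''.

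\textbf{Step 2: The case $n\geq 3$, proof of (i).} An effective $2$-regular GKM graph on $n\geq 3$ vertices is an $n$-gon whose $i$-th and $(i{+}1)$-st edge labels $\gamma_i,\gamma_{i+1}$ always form a basis (this is the GKM linear-independence condition together with effectiveness, using that the connection relation $\gamma_{i+1}\equiv\pm\gamma_{i-1}\bmod\gamma_i$ propagates the basis property around the polygon). This is precisely the combinatorial datum of a characteristic function on the boundary of an $n$-gon in the sense of Davis--Januszkiewicz: assign to the $i$-th facet (edge) of the $n$-gon the primitive normal vector dual to $\gamma_i$ (equivalently, a primitive generator of the line annihilated by the circle $\ker\gamma_i\subset T^2$). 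The basis condition on consecutive $\gamma_i$ is exactly the nonsingularity condition that consecutive characteristic vectors span $\mathbb{Z}^2$, so by the Davis--Januszkiewicz construction there is a quasitoric manifold $X$ over the $n$-gon with this characteristic function, and a direct check of the fixed-point weights of its $T^2$-action (the weights at the vertex where facets $i,i-1$ meet are the dual basis to the two characteristic vectors, hence projectively $\gamma_{i-1},\gamma_i$) shows its GKM graph is isomorphic to $B$.

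\textbf{Step 3: Refinements (ii) and (iii).} For (iii): if $B$ is of polytope type, the $n$-gon is realized as an honest simple convex $2$-polytope $P\subset\mathbb{R}^2$ whose edge slopes are the $\gamma_i$; feeding $P$ together with the inward primitive normals into Delzant's construction gives a toric manifold $X$ whose moment polytope is $P$, and Delzant's theorem (or just inspection of the fixed-point data of the toric $X$) identifies its GKM graph with $B$. For (ii): a compatible \emph{signed} structure on $B$ is a coherent choice of signs for the $\gamma_i$; I would use this to choose an \emph{omniorientation} of the quasitoric manifold from Step 2 -- i.e. the sign data picks out, at each fixed point, an ordering/orientation of the two tangent weights, which is exactly the datum of a $T$-invariant almost complex structure on a quasitoric manifold (Buchstaber--Panov / Kustarev). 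One checks the induced signed GKM graph is the given signed $B$.

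\textbf{Main obstacle.} The genuinely substantive point is Step 2: verifying that ``effective $2$-regular GKM graph on an $n$-gon'' is \emph{equivalent data} to a nonsingular characteristic function on the $n$-gon, and in particular that the GKM graph of the resulting quasitoric manifold is isomorphic (in the sense of Definition~\ref{defn:GKM-iso}, i.e. up to an automorphism $\varphi$ of $\mathbb{Z}^2$) to $B$ and not merely to some graph with the same underlying polygon. This requires carefully matching the two combinatorial conventions (edge labels / slopes versus facet normals, which are dual to one another) and tracking the sign/connection bookkeeping around the cycle; the potential subtlety is a global monodromy obstruction around the $n$-gon, but this is killed by the fact that $H^1$ of the $n$-gon's boundary does not obstruct extending the characteristic data (equivalently, one is free to choose the lift of labels around the cycle as in Remark~\ref{rem:correspondencedata}). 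Items (i)'s $S^4$ case and (ii), (iii) are then essentially bookkeeping on top of the standard equivariant-topology input.
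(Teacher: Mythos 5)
Your overall route coincides with the paper's: for $n\geq 3$ you pass from the labels $\gamma_i$ to the circles $\ker\gamma_i$ and build the quasitoric manifold from the canonical model over the $n$-gon, the $n=2$ case is the (pulled-back) standard $T^2$-action on $S^4\subset\CC^2\oplus\RR$, and (iii) is Delzant. Parts (i) and (iii) are fine as sketched, and your observation that (ii) is vacuous for $n=2$ is consistent with the paper.

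There is, however, a genuine gap in your treatment of (ii). You assert that the sign data, i.e.\ an omniorientation, ``is exactly the datum of a $T$-invariant almost complex structure.'' That is not what Kustarev's theorem (or \cite[Theorem 7.3.24]{MR3363157}) says: an invariant almost complex structure corresponds to a \emph{positive} omniorientation, meaning the orientation of $T_pX$ induced by the fiberwise complex structures at each fixed point $p$ must agree with one globally chosen orientation of $X$. An arbitrary omniorientation need not be positive, so after translating the signed structure into parametrizations $\lambda_e=\alpha_{e'}^*$ of the circles $T_e$ (which you do correctly, and whose well-definedness along edges uses the connection relation $\alpha_{e''}\equiv\alpha_{e'}\bmod\alpha_e$), one still has to prove positivity. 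This is where the paper does its actual work: it shows
\[
\det (\alpha_{e''},\alpha_{\overline{e}})=-\det(\alpha_{e''},\alpha_e)=\det(\alpha_{e},\alpha_{e''})=\det(\alpha_e,\alpha_{e'}),
\]
i.e.\ the sign of the determinant of the ordered weight pairs is constant along each edge of the polygon, hence globally constant, which by \cite[Proposition 7.3.21]{MR3363157} is exactly positivity. Without this step your argument only produces an omniorientation, not an invariant almost complex structure, so you should add this verification (or an equivalent one) to close part (ii).
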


Part $(iii)$ is Delzant's theorem. In $(i)$ the statement holds for $n=2$ since any such graph can be realized by a $T$-action on $S^4\subset \mathbb{C}^2\oplus \mathbb{R}$ which acts on the $\mathbb{C}^2$ factor as a pullback of the standard representation along some automorphism of $T$; for $n\geq 3$ it follows from the canonical model of quasitoric manifolds which we will now briefly recall in our $4$-dimensional setting (see \cite{MR3363157} for an extensive treatment also in higher dimensions) and translate to GKM graphs. As $B$ is an $n$-gon with $n\geq 3$, the underlying graph is realized as the boundary of a convex polytope $P\subset\mathbb{R}^2$. For an edge $e$ of $P$ we set $T_e\subset T$ to be $\ker\alpha$, where $\alpha$ is the weight (up to sign) associated to $e$ in $B$, interpreted as a homomorphism $T\to S^1$. We define $X:=P\times T/\sim$ where the equivalence relation $\sim$ is generated by

\[(x,s)\sim (x,t)\text{ if }\begin{cases}x\text{ is a vertex of } P\\
x\in e,~ts^{-1}\in T_e\text{ for some edge $e$ of $P$} \end{cases}\]
The space $X$ carries a natural $T$-action on the second factor and clearly $X_1$ is precisely the $T$-space encoded by $\Gamma$. One can show that $X$ is simply-connected and actually carries the structure of a smooth manifold such that the $T$-action is smooth (see \cite{MR3363157}).

Regarding $(ii)$, the existence of invariant almost complex structures on quasi-toric manifolds is already well understood and it only remains to draw the connection to signed GKM graphs. In fact by \cite{Kustarev}, see also \cite[Theorem 7.3.24]{MR3363157}, the existence of an invariant almost complex structure is equivalent to the existence of what is called a positive omniorientation. An omniorientation is equivalent to the datum of an orientation of $X$ and an explicit parameterization of $T_e$ for every edge $e$, i.e.\ a primitive vector $\lambda_e\in \mathbb{Z}_\mft$   (unique up to sign) such that $\lambda_e$ spans the Lie algebra of $T_e$. Let us also recall the notion of positivity: if $e$ and $e'$ are two edges meeting at a fixed point $p\in X$ then $T_e$ acts in non-trivial fashion on the tangent space at $p$ of the $2$-sphere $S^2_{e'}$ belonging to $e'$. The generator $\lambda_e$ defines an identification $S^1\cong T_e$ and thus gives rise to a $T$-invariant almost complex structure on this subspace. Analogously we obtain an almost complex structure on $T_pS^2_{e}$ using the generator $\lambda_{e'}$. In total we obtain an invariant almost complex structure on $T_pX$. The omniorientation is called positive if for every vertex $p$, the orientation of $T_pM$ induced by the almost complex structure agrees with the chosen orientation for $X$, in which case the almost complex structure on these isolated tangent spaces extends to a $T$-invariant almost complex structure on all of $X$ by \cite{Kustarev}.

In order to connect this concept to the structure of a signed GKM graph we need to relate the $\lambda_e$ and the weights defined by the almost complex structure  on $T_pX$. Consider the dual basis $\lambda_e^*,\lambda_{e'}^*\in \mathbb{Z}_\mft^*$ of $\lambda_e,\lambda_{e'}$. One can show \cite[Proposition 7.3.18]{MR3363157} that the weight of $T_pS^2_{e}$ is $\lambda_{e'}^*$ and the weight of $T_{p}S^2_{e'}$ is $\lambda_{e}^*$.

Now assume $B$ has the structure of a signed GKM graph. Reversing the above correspondence, we define an omniorientation of $X$ such that the weights of the isotropy representations at any fixed point -- with the signs induced by the almost complex structure coming from the omniorientation -- agree with the weights of the edges in $B$ starting at the corresponding vertex. Let $e,e'$ be the edges starting at the vertex of a fixed point $p$. Let $\alpha_{e}$, $\alpha_{e'}$ be the weights of $e$, $e'$ and let  $\alpha_e^*,\alpha_{e'}^*\in \mathbb{Z}_\mft$ be the dual basis. Then we choose $\alpha_{e'}^*$ as parametrization for $T_e$. Let us check that this is well defined: suppose $e$ goes from $p$ to $q$ and $e''$ is the other  oriented edge starting at $q$ with weight $\alpha_{e''}$ in $B$. Then transporting $e'$ along $e$ via a connection on $\Gamma$ we obtain the edge $e''$. Thus $\alpha_{e''}\equiv\alpha_{e'}\mod \alpha_e$. This implies $\alpha_{e''}^*=\alpha_{e'}^*$ so we have a well-defined omniorientation.

It remains to prove that it is positive. This is a consequence of the following property of the weights. Let $p,q,e,e',e''$ be as above and let $\overline{e}$ be the reversed orientation of $e$. Then
\begin{align*}
\det (\alpha_{e''},\alpha_{\overline{e}})=-\det(\alpha_{e''},\alpha_e)=\det(\alpha_{e},\alpha_{e''})=\det(\alpha_e,\alpha_{e'})
\end{align*}
where the last equation is again due to $\alpha_{e''}\equiv\alpha_{e'}\mod \alpha_e$. The weights in the determinants on the left and the right are those defined by the omniorientations at $q$ and $p$, in the order coming from an orientation on the polytope. The claim now follows from Proposition 7.3.21 and the subsequent remark in \cite{MR3363157}.

\subsection{A vector bundle over the one-skeleton}\label{sec:constr-sec}

Having realized the 2-regular graph $B$ by a 4-dimensional manifold $X$, it is now our intermediate goal to construct an equivariant complex vector bundle over $X_1$ such that the set of of $0$- and $1$-dimensional orbits in its projectivization is precisely the $T^2$-space encoded in the graph $\Gamma$. In this section, we will use the theory of cohomogeneity one actions \cite{Mostert}, i.e., actions of compact Lie groups on closed manifolds whose principal orbits have codimension one. In particular, we will use the description of cohomogeneity one $G$-actions with orbit space $[0,1]$ in terms of group diagrams $(G,K^+,K^-,H)$: these are collections of compact Lie groups $H\subset K^\pm\subset G$ such that $K^+/H$ and $K^-/H$ are spheres, see \cite[Theorem 4]{Mostert}, \cite{GGZarei}.

We make use of the notation from Remark \ref{rem:correspondencedata}.
We also return to the notation from the proof of Proposition \ref{prop:correspondence}: choose an orientation for the edge in the fiber over $v_1$ with associated weight $\alpha_1$. Using a compatible connection, this inductively defines orientations on the edges in the fibers over all the $v_i$
such that $\alpha_i\equiv \alpha_{i+1}\mod \gamma_i$. We extend this notation with this property to all $i\in\mathbb{Z}$. Transporting an edge around a lift of the path which goes around the base graph once reverses its orientation if $\Gamma$ is of twisted type and preserves the orientation if it is of product type. Hence we have $\alpha_i=(-1)^{\eta}\alpha_{i+n}$, where $\eta=1$ if $\Gamma$ is of twisted type and $\eta=0$ if it is of product type.
Also recall from the proof of Proposition \ref{prop:correspondence} that there are unique $k_i\in \mathbb{Z}$ such that
\[\alpha_i=k_i\gamma_{i-1}-k_{i-1}\gamma_i.\]

We now come to the construction of the vector bundle: for each invariant $2$-sphere in $X$ corresponding to some edge $e_i$ ($i=1,\ldots,n$) we want to construct a $T$-equivariant $\U(2)$-principal bundle $P_i\to S^2$ such that the projectivization of the associated $\CC^2$-bundle $P_i\times_{\U(2)}\CC^2\to S^2$ with respect to the standard representation of $\U(2)$ on $\CC^2$ has as GKM graph exactly $\pi^{-1}(e_i)$, as depicted in Lemma \ref{lem:projuebers2} below. This manifold $P_i$ being a $T$-equivariant $\U(2)$-principal bundle is the same to construct it as a $(T\times \U(2))$-cohomogeneity one manifold such that the $\U(2)$-subaction is free. For some arbitrary integers $a_i,b_i,c_i$ (which we will specify below), define the homomorphisms

\[A_i^+\colon T\rightarrow \U(2),\quad t\mapsto\begin{pmatrix}
a_i\gamma_{i-1}(t)-b_i\gamma_i(t) & \\ & (a_i-k_i)\gamma_{i-1}(t) + (k_{i-1}-b_i)\gamma_i (t)
\end{pmatrix}\]
and
\[
A_i^-\colon T\rightarrow \U(2),\quad t\mapsto\begin{pmatrix}
c_i\gamma_i(t) - a_i\gamma_{i+1}(t) & \\ & (c_i-k_{i+1})\gamma_i(t) + (k_{i}-a_i)\gamma_{i+1}(t)
\end{pmatrix} \]
where the entries of the matrices are to be interpreted as the homomorphisms $T\rightarrow S^1$ corresponding to the respective elements of $\mathbb{Z}_\mft^*$.

Let $P_i$ be the cohomogeneity-one manifold defined by the group diagram $(G,K^+_i,K^-_i,H_i)$; here,
\begin{align*}
G &= T\times \U(2) \\
K^+_i&=\{(t,A_i^+(t))\mid t\in T\},\\
K^-_i&= \{(t,A_i^-(t))\mid t\in T\},\\
\text{and}\quad H_i&=\{(t,A_i^+(t))\mid t\in \ker \gamma_i\}= \{(t,A_i^-(t))\mid t\in \ker \gamma_i\}.
\end{align*}
The last equation holds since by definition $\gamma_{i-1}=-\gamma_{i+1}\mod \gamma_i$. Consequently $H_i$ is contained both in $K^+_i$ and in $K^-_i$. Furthermore $K_i^\pm/H_i\cong S^1$, so that this really defines a valid group diagram. We fix identifications of the outer (non-principal) orbits with $G/K_i^+$ and $G/K_i^-$.

We observe that $\U(2)\cong \{e\}\times \U(2)$ intersects $K^\pm_i$ trivially, so that the $\U(2)$-subaction on $P_i$ is free; dividing out this subaction we obtain a $T^2$-manifold of cohomogeneity one, whose group diagram is given by the projection of the respective subgroups to the $T$-factor, i.e., $(T, T, T, \ker \gamma_i)$. This manifold is equivariantly diffeomorphic to the $S^2\subset X$ corresponding to the edge $e_i$.

Associated to $P_i\to S^2$, we obtain an associated $T$-equivariant $\CC^2$-bundle
\[
E_i:=P_i\times_{\U(2)}\CC^2\longrightarrow S^2,
\]
where $\U(2)$ acts on $\CC^2$ by the standard representation. Let us compute the $T^2$-representation on the fibers over the fixed points in $S^2$ through the identifications
\[
\CC^2 \cong ((T\times \U(2))/K^\pm_i) \times_{\U(2)} \CC^2,\quad v\mapsto [(e,I_2)K_i^\pm,v].
\]
It is given by pulling back the standard $\U(2)$-representation along $A_i^\pm$ since
\[
t\cdot [(e,I_2)K^\pm_i,v] = [(t,I_2)K^\pm_i,v] = [(e,(A_i^\pm(t))^{-1})K^\pm_i,v] = \left[(e,I_2)K^\pm_i,A_i^\pm(t)v\right].
\]
Next we pass to the projectivized bundle $\PP(E_i)\to S^2$, which is a $T$-equivariant $\CC \PP^1$-bundle over $S^2$.
\begin{lem}\label{lem:projuebers2} The projectivization $\PP(E_i)\rightarrow S^2$ is a  fibration of GKM manifolds. On (unsigned) GKM graphs it is given by

\begin{center}
\begin{tikzpicture}

\draw[->, very thick] (-4,0) -- ++(2,0);

\draw[very thick] (-7,1) -- ++(2,0) -- ++(0,-2) --++(-2,0)--++(0,2);
\node at (-6,1.3){$\gamma_i$};
\node at (-6,-1.3){$\gamma_i$};
\node at (-7.3,0){$\alpha_i$};
\node at (-4.5,0){$\alpha_{i+1}$};

\node at (-7,-1)[circle,fill,inner sep=2pt]{};
\node at (-7.3,-1.3){$q_i$};

\node at (-7,1)[circle,fill,inner sep=2pt]{};
\node at (-7.3,1.3){$p_i$};

\node at (-5,1)[circle,fill,inner sep=2pt]{};
\node at (-4.7,1.3){$p_{i+1}$};

\node at (-5,-1)[circle,fill,inner sep=2pt]{};
\node at (-4.7,-1.3){$q_{i+1}$};

\draw[very thick] (-1,0)--++(2,0);
\node at (0,0.3){$\gamma_i$};

\node at (-1,0)[circle,fill,inner sep=2pt]{};
\node at (-1,0.3){$v_i$};

\node at (1,0)[circle,fill,inner sep=2pt]{};
\node at (1,0.3){$v_{i+1}$};

\end{tikzpicture}
\end{center}
where (with respect to the identifications above) $p_i$ and $p_{i+1}$ correspond to $[1:0]$ while $q_i$ and $q_{i+1}$ correspond to $[0:1]$.
\end{lem}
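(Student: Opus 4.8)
The plan is to read off the $T$-fixed points, their isotropy weights, and the invariant two-spheres of $\PP(E_i)$ directly from the cohomogeneity-one presentation of $P_i$, and then to observe that the bundle projection $\PP(E_i)\to S^2$ is automatically a graph fibration carrying the asserted labelled graph. First, a point of $\PP(E_i)$ is $T$-fixed precisely when it lies over one of the two $T$-fixed points $v_i,v_{i+1}$ of $S^2$ (the singular orbits of the cohomogeneity-one structure, whose isotropy is all of $T$) and is fixed in the fibre. Over $v_i$ the fibre is $\PP(\CC^2)$ with $T$ acting through $\PP(A_i^+)$, and $A_i^+=\mathrm{diag}(\chi_1^+,\chi_2^+)$ satisfies $\chi_2^+-\chi_1^+=-\alpha_i\neq0$: indeed $\gamma_{i-1},\gamma_i$ form a basis since $B$ is effective, and $\alpha_i=k_i\gamma_{i-1}-k_{i-1}\gamma_i\neq0$ because $k_i\neq0$. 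Hence this action on $\CC\PP^1$ has exactly the fixed points $[1:0],[0:1]$, which I call $p_i,q_i$; likewise, using $A_i^-$ with $\chi_2^--\chi_1^-=-\alpha_{i+1}$, the fibre over $v_{i+1}$ yields $p_{i+1}=[1:0]$ and $q_{i+1}=[0:1]$. This gives exactly the identification of the four vertices with $[1:0]$ and $[0:1]$ asserted in the lemma.

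Next I would compute the isotropy weights and the one-skeleton. At each fixed point the tangent space splits $T$-equivariantly as $\pi^*T_vS^2\oplus T_{[\ell]}\PP(\CC^2)$; the horizontal summand carries the weight $\gamma_i$ of the base edge $v_iv_{i+1}$, and the vertical summand $\Hom(\ell,\CC^2/\ell)$ carries $\chi_2^+-\chi_1^+=-\alpha_i$ at $p_i$, $\chi_1^+-\chi_2^+=\alpha_i$ at $q_i$, and $-\alpha_{i+1}$ at $p_{i+1}$ respectively $\alpha_{i+1}$ at $q_{i+1}$ -- matching the picture up to sign. The two fibres $\PP(E_i)|_{v_i}$ and $\PP(E_i)|_{v_{i+1}}$ are invariant two-spheres realizing the vertical edges $p_iq_i$, $p_{i+1}q_{i+1}$. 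For the horizontal edges I would use that over the interior of the orbit interval of $S^2$ the $T$-equivariant bundle $E_i$ is pulled back from the interval, hence splits $T$-equivariantly as the sum $L^{(1)}\oplus L^{(0)}$ of its two $\ker\gamma_i$-weight subbundles; these two $\ker\gamma_i$-weights are distinct exactly because $\alpha_i$ is independent from $\gamma_i$, i.e.\ $k_i\neq0$. Because $A_i^+$ and $A_i^-$ restrict to the same representation of $\ker\gamma_i$ -- this is precisely the relation $H_i\subset K_i^+\cap K_i^-$, equivalently $\gamma_{i-1}\equiv-\gamma_{i+1}\bmod\gamma_i$ -- the subbundle $L^{(1)}$, the one with the weight of $\CC e_1$, limits to $\CC e_1$ at both endpoints, so $\overline{\PP(L^{(1)})}$ is a $T$-invariant two-sphere and a genuine section of $\PP(E_i)\to S^2$ joining $p_i=[1:0]$ to $p_{i+1}=[1:0]$; being a section, its weight at $p_i$ is $\gamma_i$. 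Similarly $\overline{\PP(L^{(0)})}$ joins $q_i$ to $q_{i+1}$, again with label $\gamma_i$.

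It then remains to check that $\PP(E_i)$ is a GKM manifold with this square graph and that the projection is a GKM fibration. For the first point, $\PP(E_i)$ is a complex surface, hence oriented, with the finite fixed-point set found above, and a short analysis of the circle subgroups $S\subset T$ shows that $\PP(E_i)_1$ is exactly the union of the four invariant two-spheres already produced: $\PP(E_i)^{\ker\gamma_i}$ is the pair of sections, while any $S$ acting nontrivially on $S^2$ has $\PP(E_i)^S$ contained in the two fibres. For the second point, the projection sends $p_i,q_i\mapsto v_i$, $p_{i+1},q_{i+1}\mapsto v_{i+1}$ and both horizontal spheres onto the single edge of the base, so it is a graph fibration; since the base graph has only one edge, the conditions of Definition~\ref{defn:GKMfibration} reduce to $\alpha_B(v_iv_{i+1})=\gamma_i=\alpha(p_ip_{i+1})=\alpha(q_iq_{i+1})$, which has just been verified.

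The only step that is not essentially formal is the identification of the one-skeleton -- in particular, producing the two horizontal invariant two-spheres and showing that they connect $p_i$ to $p_{i+1}$ and $q_i$ to $q_{i+1}$ rather than crossing over. This rests on the $T$-equivariant splitting of $E_i$ over the principal part into $\ker\gamma_i$-weight subbundles, valid because $k_i\neq0$, together with the fact that $A_i^+$ and $A_i^-$ induce the same representation of $\ker\gamma_i$, which is exactly the defining relation $\gamma_{i-1}\equiv-\gamma_{i+1}\bmod\gamma_i$ that makes the group diagram legitimate.
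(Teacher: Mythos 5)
Your proof is correct, and for the one step that actually requires an argument it takes a genuinely different route from the paper. Both proofs read off the fixed points and their fiber weights from the diagonal entries of $A_i^{\pm}$ (in particular both compute the fiber weight at $q_i$ as $\chi_1^+-\chi_2^+=\alpha_i$), so up to that point you coincide with the paper. The divergence is in how one decides that $q_i$ is joined to $q_{i+1}$ rather than to $p_{i+1}$. The paper argues abstractly: it endows $\PP(E_i)$ with an invariant almost complex structure coming from the $\U(2)$-structure group, invokes the general existence of a connection compatible with the resulting signed GKM graph, and observes that compatibility forces the horizontal edge at $q_i$ to end at the vertex whose fiber weight is congruent to $\alpha_i$ modulo $\gamma_i$; since $\alpha_{i+1}\equiv\alpha_i$ while $-\alpha_{i+1}\equiv-\alpha_i\not\equiv\alpha_i \bmod \gamma_i$, that vertex is $q_{i+1}$. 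You instead exhibit the two horizontal invariant spheres explicitly as closures of the projectivized $\ker\gamma_i$-weight subbundles over the principal stratum, using that $A_i^+$ and $A_i^-$ agree on $\ker\gamma_i$ (the relation $\gamma_{i-1}\equiv-\gamma_{i+1}\bmod\gamma_i$ underlying the group diagram) and that the two $\ker\gamma_i$-weights are distinct because $k_i\neq 0$. Your route is more self-contained: it does not presuppose that $\PP(E_i)$ is GKM with a compatible connection but derives the one-skeleton and the GKM conditions directly, and it identifies the horizontal edges as actual sections of $\PP(E_i)\to S^2$, which is slightly more information than the lemma asserts. The paper's route is shorter but leans on the general theory of compatible connections for invariant almost complex GKM actions. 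Both are valid; no gaps.
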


\begin{proof}
We have a fibration
\[\mathbb{CP}^1\rightarrow \PP(E_i)\rightarrow S^2\] with structure group in $\U(2)$. Thus fixing the standard complex structure on $\mathbb{CP}^1$ and some $T^2$-invariant complex structure on $S^2$, we obtain an almost complex structure on $\PP(E_i)$ such that $\PP(E_i)\rightarrow S^2$ respects almost complex structures. We argue via the corresponding signed GKM graph.

The identifications and orientations are such that the weights at $q_i$ are given by $\pm \gamma_i$ (the sign depending on the chosen almost complex structure on $S^2$) and the upper left entry of $A_i^+$ minus the lower right entry of $A_i^+$ (coming from the fiber over $v_i$). By the definition of $A_i^+$, the latter difference is precisely $k_{i}\gamma_{i-1}-k_{i-1}\gamma_{i}=\alpha_i$. Similar one computes the weights at $q_{i+1}$ to be $\pm\gamma_i$ and $\alpha_{i+1}$. As $\alpha_i=\alpha_{i+1}\mod \gamma_i$, the existence of a connection that is compatible with the signed structure implies that $q_i$ and $q_{i+1}$ are adjacent. Thus in particular the unsigned GKM graph has the form as claimed in the lemma.
\end{proof}

Having constructed the squares in $\Gamma$ over every single edge $e_i$ it remains to glue the $E_i$ in an appropriate and $T$-equivariant manner. We start by gluing $E_{i}$ to $E_{i+1}$ for $i=1,\ldots,n-1$, by identifying the fibers over $v_{i+1}$. From the side of $E_i$ this is the representation of $\CC^2$ defined by the homomorphism $A_i^-$, while from the side of $E_{i+1}$ it is defined by $A_{i+1}^+$. Recall that there are unspecified parameters $a_i,b_i,c_i\in\mathbb{Z}$ in the construction. If we choose them such that $a_{i+1}=c_i$ and $b_{i+1}=a_i$ then $A_i^-=A_{i+1}^+$ and we glue the two fibers in the canonical way.

At this point the one-skeleton of the action on the projectivization is a ladder formed by gluing the individual squares from the lemma above in the obvious way:

\begin{center}
\begin{tikzpicture}

\draw[very thick] (-7,1) -- ++(4,0) -- ++(0,-2) --++(-4,0)--++(0,2);
\draw[very thick] (-5,1) -- ++(0,-2);
\draw[dotted, thick] (-3,1) -- ++(2,0);
\draw[dotted, thick] (-3,-1) -- ++(2,0);
\draw[very thick] (-1,1) -- ++ (4,0) -- ++(0,-2) --++(-4,0) --++(0,2);
\draw[very thick] (1,1) --++(0,-2);
\node at (-6,1.3){$\gamma_1$};
\node at (-6,-1.3){$\gamma_1$};
\node at (-4,1.3){$\gamma_2$};
\node at (-4,-1.3){$\gamma_2$};
\node at (0,1.3){$\gamma_{n-1}$};
\node at (0,-1.3){$\gamma_{n-1}$};
\node at (2,1.3){$\gamma_{n}$};
\node at (2,-1.3){$\gamma_{n}$};
\node at (-7.4,0){$\alpha_1$};
\node at (-5.4,0){$\alpha_{2}$};
\node at (-3.4,0){$\alpha_{3}$};
\node at (-0.4,0){$\alpha_{n-1}$};
\node at (1.4,0){$\alpha_{n}$};
\node at (3.6,0){$\alpha_{n+1}$};

\node at (-7,-1)[circle,fill,inner sep=2pt]{};
\node at (-7.3,-1.3){$q_1$};

\node at (-7,1)[circle,fill,inner sep=2pt]{};
\node at (-7.3,1.3){$p_1$};

\node at (-5,1)[circle,fill,inner sep=2pt]{};
\node at (-5,1.3){$p_{2}$};

\node at (-5,-1)[circle,fill,inner sep=2pt]{};
\node at (-5,-1.3){$q_{2}$};

\node at (-3,1)[circle,fill,inner sep=2pt]{};
\node at (-3,1.3){$p_{3}$};

\node at (-3,-1)[circle,fill,inner sep=2pt]{};
\node at (-3,-1.3){$q_{3}$};

\node at (-1,1)[circle,fill,inner sep=2pt]{};
\node at (-1,1.3){$p_{n-1}$};

\node at (-1,-1)[circle,fill,inner sep=2pt]{};
\node at (-1,-1.3){$q_{n-1}$};
\node at (1,1)[circle,fill,inner sep=2pt]{};
\node at (1,1.3){$p_{n}$};

\node at (1,-1)[circle,fill,inner sep=2pt]{};
\node at (1,-1.3){$q_{n}$};
\node at (3,1)[circle,fill,inner sep=2pt]{};
\node at (3,1.3){$p_{n+1}$};

\node at (3,-1)[circle,fill,inner sep=2pt]{};
\node at (3,-1.3){$q_{n+1}$};

\end{tikzpicture}
\end{center}
In order to obtain the graph $\Gamma$ it remains to glue $E_{n+1}$ to $E_1$. This needs to be done with respect to the graph theoretical structure of $\Gamma$. If $\Gamma$ is of product type, we obtain $\Gamma$ from the ladder by identifying $p_{n+1}=p_1$ as well as $q_{n+1}=q_1$. In the twisted case we need to identify $p_{n+1}=q_1$ and $q_{n+1}=p_1$.

\begin{center}
\begin{tikzpicture}

\draw[very thick] (-7.3,1) -- ++(-.7,2) -- ++(2,1) --++(0,-2)--++(-2,-1) --++(.7,-2) --++(0,2);
\draw[very thick] (-8,3) -- ++(0,-2);
\draw[dotted, thick] (-6,4) -- ++(2,-1);
\draw[dotted, thick] (-6,2) -- ++(2,-1);
\draw[very thick] (-4,3) -- ++ (-.7,-2) -- ++(0,-2) --++(.7,2) --++(0,2);
\draw[very thick] (-4.7,1) -- (-7.3,1);
\draw[very thick] (-4.7,-1.05) -- (-7.3,-1.05);

\node at (-7.3,1)[circle,fill,inner sep=2pt]{};
\node at (-7.3,-1.05)[circle,fill,inner sep=2pt]{};
\node at (-8,3)[circle,fill,inner sep=2pt]{};
\node at (-6,4)[circle,fill,inner sep=2pt]{};
\node at (-6,2)[circle,fill,inner sep=2pt]{};
\node at (-8,1)[circle,fill,inner sep=2pt]{};
\node at (-4,3)[circle,fill,inner sep=2pt]{};
\node at (-4,1)[circle,fill,inner sep=2pt]{};
\node at (-4.7,1)[circle,fill,inner sep=2pt]{};
\node at (-4.7,-1.05)[circle,fill,inner sep=2pt]{};

\node at (-6.6, .8){\tiny $p_1 = p_{n+1}$};
\node at (-8.3,2.97){\tiny $p_2$};
\node at (-6,4.3) {\tiny $p_3$};
\node at (-3.5,2.97) {\tiny $p_{n-1}$};
\node at (-4.9,.8) {\tiny $p_n$};

\node at (-7.13,-1.35){\tiny $q_1 = q_{n+1}$};
\node at (-8.3,.97) {\tiny $q_2$};
\node at (-6,1.7) {\tiny $q_3$};
\node at (-3.5,.97) {\tiny $q_{n-1}$};
\node at (-4.7,-1.35){\tiny $q_n$};

\node at (-6.55,0) {\tiny $\alpha_1 = \alpha_{n+1}$};
\node at (-8.25,1.97) {\tiny $\alpha_2$};
\node at (-5.75,3) {\tiny $\alpha_3$};
\node at (-3.55,1.97) {\tiny $\alpha_{n-1}$};
\node at (-4.95,0) {\tiny $\alpha_n$};

\draw[->, very thick] (-6,-2) -- ++(0,-1.5);

\draw[very thick] (-7.3,-7.5) -- ++(-.7,2) -- ++(2,1) ;
\draw[dotted, thick] (-6,-4.5) -- ++(2,-1);
\draw[very thick] (-4,-5.5) -- ++(-.7,-2) --++(-2.6,0);

\node at (-7.3,-7.5)[circle,fill,inner sep=2pt]{};
\node at (-8,-5.5)[circle,fill,inner sep=2pt]{};
\node at (-6,-4.5)[circle,fill,inner sep=2pt]{};
\node at (-4,-5.5)[circle,fill,inner sep=2pt]{};
\node at (-4.7,-7.5)[circle,fill,inner sep=2pt]{};

\node at (-7.13,-7.85){\tiny $v_1 = v_{n+1}$};
\node at (-8.3,-5.53) {\tiny $v_2$};
\node at (-6,-4.25) {\tiny $v_3$};
\node at (-3.5,-5.53) {\tiny $v_{n-1}$};
\node at (-4.7,-7.85){\tiny $v_n$};

\end{tikzpicture}
\qquad \qquad \qquad
\begin{tikzpicture}

\draw[very thick] (-7.3,1) -- ++(-.7,2) -- ++(2,1) --++(0,-2)--++(-2,-1) --++(.7,-2) --++(0,2);
\draw[very thick] (-8,3) -- ++(0,-2);
\draw[dotted, thick] (-6,4) -- ++(2,-1);
\draw[dotted, thick] (-6,2) -- ++(2,-1);
\draw[very thick] (-4,3) -- ++ (-.7,-2) -- ++(0,-2) --++(.7,2) --++(0,2);
\draw[very thick] (-4.7,1) -- (-7.3,-1);
\draw[very thick] (-4.7,-1) -- (-7.3,1);

\node at (-7.3,1)[circle,fill,inner sep=2pt]{};
\node at (-7.3,-1.05)[circle,fill,inner sep=2pt]{};
\node at (-8,3)[circle,fill,inner sep=2pt]{};
\node at (-6,4)[circle,fill,inner sep=2pt]{};
\node at (-6,2)[circle,fill,inner sep=2pt]{};
\node at (-8,1)[circle,fill,inner sep=2pt]{};
\node at (-4,3)[circle,fill,inner sep=2pt]{};
\node at (-4,1)[circle,fill,inner sep=2pt]{};
\node at (-4.7,1)[circle,fill,inner sep=2pt]{};
\node at (-4.7,-1.05)[circle,fill,inner sep=2pt]{};

\node at (-6.55, 1.15){\tiny $p_1 = q_{n+1}$};
\node at (-8.3,2.97){\tiny $p_2$};
\node at (-6,4.3) {\tiny $p_3$};
\node at (-3.5,2.97) {\tiny $p_{n-1}$};
\node at (-5,1.15) {\tiny $p_n$};

\node at (-7.13,-1.35){\tiny $q_1 = p_{n+1}$};
\node at (-8.3,.97) {\tiny $q_2$};
\node at (-6,1.7) {\tiny $q_3$};
\node at (-3.5,.97) {\tiny $q_{n-1}$};
\node at (-4.7,-1.35){\tiny $q_n$};

\node at (-6.9,.1) {\tiny $\alpha_1=$};
\node at (-6.8,-.1){\tiny $ -\alpha_{n+1}$};

\node at (-8.25,1.97) {\tiny $\alpha_2$};
\node at (-5.75,3) {\tiny $\alpha_3$};
\node at (-3.55,1.97) {\tiny $\alpha_{n-1}$};
\node at (-4.95,0) {\tiny $\alpha_n$};

\draw[->, very thick] (-6,-2) -- ++(0,-1.5);

\draw[very thick] (-7.3,-7.5) -- ++(-.7,2) -- ++(2,1) ;
\draw[dotted, thick] (-6,-4.5) -- ++(2,-1);
\draw[very thick] (-4,-5.5) -- ++(-.7,-2) --++(-2.6,0);

\node at (-7.3,-7.5)[circle,fill,inner sep=2pt]{};
\node at (-8,-5.5)[circle,fill,inner sep=2pt]{};
\node at (-6,-4.5)[circle,fill,inner sep=2pt]{};
\node at (-4,-5.5)[circle,fill,inner sep=2pt]{};
\node at (-4.7,-7.5)[circle,fill,inner sep=2pt]{};

\node at (-7.13,-7.85){\tiny $v_1 = v_{n+1}$};
\node at (-8.3,-5.53) {\tiny $v_2$};
\node at (-6,-4.25) {\tiny $v_3$};
\node at (-3.5,-5.53) {\tiny $v_{n-1}$};
\node at (-4.7,-7.85){\tiny $v_n$};

\end{tikzpicture}
\end{center}

In any case we wish to identify the representations defined by $A_{n+1}^-$ and $A_1^+$. Recall that $\gamma_0=(-1)^{\varepsilon_0}\gamma_n$, $\gamma_1=(-1)^{\varepsilon_1}\gamma_{n+1}$, and $\alpha_1=(-1)^\eta\alpha_{n+1}$ for $\varepsilon_i,\eta\in\{0,1\}$. We have

\[
k_1\gamma_0-k_0\gamma_1=(-1)^\eta(k_{n+1}\gamma_n-k_n\gamma_{n+1})=(-1)^{\eta+\varepsilon_0}k_{n+1}\gamma_0-(-1)^{\eta+\varepsilon_1}k_n\gamma_1
\]
which implies $k_0=(-1)^{\eta+\varepsilon_1}k_n$ and $k_1=(-1)^{\eta+\varepsilon_0} k_{n+1}$. Consequently the matrices defining $A_{n+1}^-$ and $A_1^+$ can be written as

\[\begin{pmatrix}
(-1)^{\varepsilon_0} c_n\gamma_0 - (-1)^{\varepsilon_1} a_n\gamma_{1} & \\ & ((-1)^{\varepsilon_0}c_n-(-1)^{\eta}k_{1})\gamma_0 + ((-1)^{\eta}k_{0}-(-1)^{\varepsilon_1}a_n)\gamma_{1}
\end{pmatrix},\]
and

\[\begin{pmatrix}
a_1\gamma_{0}-b_1\gamma_1 & \\ & (a_1-k_1)\gamma_{0} + (k_{0}-b_1)\gamma_1
\end{pmatrix}.\]

Thus in the product case, if we have $a_1=(-1)^{\varepsilon_0}c_n$ and $b_1=(-1)^{\varepsilon_1} a_n$ then $A^-_{n+1}=A^+_1$ and the two naturally glue, finishing the construction. Observe that an arbitrary choice of either the $a_i$, the $b_i$, or the $c_i$ for $i=1,\ldots,n$ uniquely defines the respective other coefficients.

In the twisted case we set $c_n=(-1)^{\varepsilon_0}(a_1-k_1)$ and $a_n=(-1)^{\varepsilon_1}(b_1-k_0)$. Again, an arbitrary choice of the $a_i$, the $b_i$, or the $c_i$ defines the other coefficients uniquely such that this and the previous gluing conditions hold. Now $A_{n+1}^-$ and $A_1^+$ do not agree but arise from one another by swapping the diagonal entries. It follows that the automorphism of $\mathbb{C}^2$ which swaps both factors is equivariant with respect to the actions defined by $A_{n+1}^-$ and $A_1^+$. We use this automorphism to glue $E_n$ to $E_1$ along the fiber over $v_1$. Since the induced automorphism of $\mathbb{CP}^1$ swaps the fixed points $[0:1]$ and $[1:0]$ it follows that in the graph encoding the one-skeleton of the projectivization, $p_{n+1}$ gets glued to $q_1$ and $q_{n+1}$ gets glued to $p_1$ and is thus precisely $\Gamma$.

\subsection{Obstruction theory for quasitoric manifolds}\label{S:ObstructionTheory}
Let $X$ be a quasitoric manifold of dimension $4k$. Denote by $\pi \colon X \to P$ the projection
to the associated simple $2k$-polytope $P$. The preimage under $\pi$ of the interior of $P$ is diffeomorphic
to $T^{2k} \times \mathring D^{2k}$ where $\mathring D^{2k}$ denotes the interior of the unit
closed ball in $\RR^{2k}$. Furthermore this space can be regarded as the interior of the equivariant free top cell of
$X$: recall from the canonical model of a quasitoric manifold (see the proof of Proposition \ref{prop:dim4realization}) that $X$ is equivariantly homeomorphic to a quotient $ T^{2k}\times P/\sim$. Identifying $P$ with $D^{2k}$, the natural projection $T^{2k}\times D^{2k}\rightarrow T^{2k}\times P/\sim$ becomes a characteristic map for a relative CW structure on $(X,A)$, where $A$ is the preimage of the boundary of $P$ under $\pi$.

Note that, although it is not quasitoric, we have the same kind of cellular structure for the $T^2$-action on $X=S^4\subset \mathbb{C}^2\oplus \mathbb{R}$ which acts in standard fashion on the $\mathbb{C}^2$ factor. The space $X$ arises from $A=X_1=\{(v,w,z)\in S^4~|~v=0\text{ or }w=0\}$ by attaching a single free cell.
To see this consider the space $D=\{(x,y,z)\in S^3~|~x,y\geq 0\}$, which is a $2$-disk. The map $D\times T^2\rightarrow S^4$, defined as the equivariant extension of the map $D\times\{e\}\cong D\rightarrow S^4$ induced by the inclusion $\mathbb{R}\rightarrow\mathbb{C}$ on the first two components, is a characteristic map for the relative CW structure. Of course this CW decomposition induces an analogous decomposition for any pullback of the above action along a group automorphism.

\begin{lem}\label{L:extension of equivariant vector bundles}
   Suppose $(X,A)$ is the equivariant relative $T^{2k}$-CW complex defined above and
  $E \to A$ is an equivariant complex vector bundle of rank $r$ with $r>k-1$. Then $E$ can
  be extended to an equivariant vector bundle over $X$.
\end{lem}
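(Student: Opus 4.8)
The plan is to run equivariant obstruction theory for complex vector bundles over the single relative equivariant cell of the pair $(X,A)$. Recall that $X=A\cup_\phi(T^{2k}\times D^{2k})$, where the characteristic map $\Phi\colon T^{2k}\times D^{2k}\to X$ is equivariant for the translation action on the first factor, restricts to an open embedding on $T^{2k}\times\mathring D^{2k}$, and satisfies $\phi:=\Phi|_{T^{2k}\times S^{2k-1}}\colon T^{2k}\times S^{2k-1}\to A$ (the case $X=S^4$ is the instance $k=1$, with the cell $T^2\times D$ for a $2$-disk $D$, and the argument is uniform). First I would pick a $T^{2k}$-invariant open collar $U\supseteq A$ that equivariantly deformation retracts onto $A$, and set $V:=\Phi(T^{2k}\times\mathring D^{2k})$, so that $X=U\cup V$ with $U\cap V$ equivariantly homotopy equivalent to $T^{2k}\times S^{2k-1}$, the equivalence being compatible up to $T^{2k}$-homotopy with $\phi$ and with the composite $U\cap V\hookrightarrow U\to A$. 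Pulling $E$ back along $U\to A$ produces an extension $E_U$ over $U$, so constructing the desired bundle on $X$ amounts to producing an equivariant bundle $E_V$ on $V$ together with an equivariant isomorphism $E_U|_{U\cap V}\cong E_V|_{U\cap V}$; since $\{U,V\}$ is an invariant open cover, such data glue to an equivariant bundle on $X$ extending $E$.

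The second step is to pass to orbit spaces on the free part. On $V\cong T^{2k}\times\mathring D^{2k}$ and on $U\cap V\simeq_{T^{2k}}T^{2k}\times S^{2k-1}$ the torus acts freely, so pullback along the quotient maps identifies equivariant complex vector bundles with ordinary ones over the orbit spaces $\mathring D^{2k}$ and $S^{2k-1}$ respectively. Under this dictionary, $E_U|_{U\cap V}$ corresponds to the rank-$r$ bundle $\bar E$ over $S^{2k-1}$ whose pullback to $T^{2k}\times S^{2k-1}$ is $\phi^*E$, while the restriction to $U\cap V$ of any equivariant bundle on $V$ corresponds to the restriction of a bundle on the contractible disc $\mathring D^{2k}$, hence to a trivial bundle over $S^{2k-1}$. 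So the required isomorphism exists precisely when $\bar E$ is trivial, and if it is we may take $E_V$ to be the trivial rank-$r$ equivariant bundle on $V$. Thus the lemma is reduced to the purely topological claim that every rank-$r$ complex vector bundle over $S^{2k-1}$ is trivial as soon as $r>k-1$.

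For the last step, isomorphism classes of rank-$r$ complex vector bundles over $S^{2k-1}$ are classified by $[S^{2k-1},\mathrm{B}\U(r)]=\pi_{2k-1}(\mathrm{B}\U(r))\cong\pi_{2k-2}(\U(r))$. From the fibrations $\U(r)\hookrightarrow\U(r+1)\to S^{2r+1}$ and their long exact sequences, the stabilization map $\pi_j(\U(r))\to\pi_j(\U(\infty))$ is an isomorphism for $j\le 2r-1$. The hypothesis $r>k-1$ means $r\ge k$, hence $2k-2\le 2r-2<2r-1$, so $\pi_{2k-2}(\U(r))\cong\pi_{2k-2}(\U(\infty))$, and the latter vanishes by Bott periodicity because $2k-2$ is even. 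Therefore $\bar E$ is trivial and, by the previous paragraph, $E$ extends to an equivariant complex vector bundle over $X$. I expect the main obstacle to be not this homotopy-theoretic computation but the bookkeeping in the first paragraph: one must make sure that the extension, chosen essentially over the one free cell, actually assembles into a globally defined equivariant bundle, which is why the argument should be organized through the invariant collar $U$ and the open cover $X=U\cup V$ rather than by attempting to glue bundles directly along the closed subset $A$.
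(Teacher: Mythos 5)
Your proof is correct, but it takes a genuinely different route from the paper's. The paper runs equivariant obstruction theory directly against the classifying space $B(T^{2k},\U(r))$ of $T^{2k}$-equivariant $\U(r)$-bundles: the extension over the single free cell exists iff the composite $S^{2k-1}\to T^{2k}\times S^{2k-1}\xrightarrow{\varphi}A\xrightarrow{\varepsilon}B(T^{2k},\U(r))$ is nullhomotopic, and one invokes tom Dieck's identification $B(T^{2k},\U(r))\simeq B\U(r)$ together with $\pi_{2k-1}(B\U(r))=0$ in the stated range. You instead avoid equivariant classifying spaces altogether: you cover $X$ by an invariant collar $U\simeq A$ and the open free cell $V$, use that equivariant bundles over the free pieces descend to ordinary bundles over the contractible orbit space $\mathring D^{2k}$ and over $S^{2k-1}$, and reduce to the triviality of rank-$r$ bundles on $S^{2k-1}$ via the same computation $\pi_{2k-2}(\U(r))\cong\pi_{2k-2}(\U(\infty))=0$ for $r\geq k$. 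Your clutching argument is more self-contained (and you are in fact slightly more careful than the paper in spelling out the stable-range inequality $2k-2\leq 2r-1$ before applying Bott periodicity), at the price of some point-set bookkeeping with the collar and the open cover; the paper's argument is shorter given the cited machinery and would extend verbatim to situations with several or non-free cells, where your descent to orbit spaces would no longer apply. Since here there is exactly one cell and it is free, both arguments are complete.
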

\begin{proof}
We will use equivariant obstruction theory to prove this lemma, cf. \cite[Chapter 2, Section
  3]{MR889050}. Since $E \to A$ is a $T^{2k}$-equivariant complex vector bundle, we have a map
    $\varepsilon \colon A \to B(T^{2k},U(r))$, where $B(T^{2k},U(r))$ is the classifying space of
    $T^{2k}$-equivariant $U(r)$-principal bundles, see \cite[Section 3.1]{MR0245027}. We would like
    to extend $\varepsilon$ over the top cell $e^{T^{2k}}_{2k}$ of $X$.

   Now $\varepsilon$ has an equivariant extension
  to $X = A \cup_{\varphi} e^T_{2k}$ if the composition of maps
  \[
    S^{2k-1} \longrightarrow T^{2k} \times S^{2k-1} \overset{\varphi}{\longrightarrow} A \overset{\varepsilon}{\longrightarrow}
    B(T^{2k},U(r))
  \]
  is nullhomotopic, see \cite[pp. 115]{MR889050}. The composition gives
  an element of $\pi_{2k-1}( B(T^{2k},U(r) ) )$, but the classifying space
  $B(T^{2k},U(r))$ is homotopy equivalent to $BU(r)$ \cite[p.\ 142]{MR0245027} and since $r > k-1$
  we have by Bott periodicity $\pi_{2k-1}(B(T^{2k},U(r)))=0$. This implies that $\varepsilon$ can be
  extended equivariantly to a map $\varepsilon \colon X \to B(T^{2k},U(r))$.

\end{proof}

\begin{rem}\label{R:Equivariant vector bundles is smooth as fuck}
As $X$ is smooth, the complex bundle $E\rightarrow X$ in the previous lemma can be constructed in a smooth fashion as well. To see this, note that $E$ admits an equivariant, fiberwise injective and linear map $f\colon E\rightarrow W$ to some finite-dimensional complex $T^{2k}$-representation (as in the proof of \cite[Chapter I, Proposition 9.7]{MR889050}). Denote by $G_r(W)$ the Grassmanian of complex $r$-planes (where $r$ is the rank of $E$) and by $E_r\rightarrow G_r(W)$ the tautological bundle.  The action on $W$ extends to an action on $G_r(W)$ and $E_r$, making the latter an equivariant vector bundle. The map $f$ induces a bundle map $g\colon E\rightarrow E_r$  which maps each fiber $E_x$ isomorphically to the fiber over $f(E_x)\in G_r(W)$ via $f$. Thus the underlying equivariant map $\bar{g}\colon X\rightarrow G_r(W)$ has the property that $\bar{g}^*(E_r)\cong E$. By \cite[Corollary 1.12]{MR0250324} we may equivariantly homotope $\bar{g}$ to a smooth map. The pullback along the latter is naturally smooth and still equivariantly equivalent to $E$ (see \cite[Theorem 8.15]{MR889050}).
\end{rem}

Applying Lemma \ref{L:extension of equivariant vector bundles} to the vector bundle
constructed in Section \ref{sec:constr-sec} we arrive at the following theorem, which is an intermediate step in the proof of Theorem \ref{thm:mainthm}.

\begin{thm}\label{thm:vectorbundle}
Let $\Gamma\rightarrow B$ be a GKM fibration as in Theorem \ref{thm:mainthm} and $X$ be the realization of $B$ from Proposition \ref{prop:dim4realization}. We fix the notation from Remark \ref{rem:correspondencedata}.
For any $(a_1,\ldots,a_n)\in \mathbb{Z}^n$ there exists a smooth $T^2$-equivariant complex vector bundle $E\rightarrow X$ of rank $2$ with the following properties:
\begin{enumerate}[(i)]
\item the isotropy representation of $E$ over $v_i$, $i=1,\ldots,n$, is isomorphic to the pullback of the standard action on $\mathbb{C}^2$ along the homomorphism $T\rightarrow \U(2)$,
\[t\mapsto\begin{pmatrix}
a_i\gamma_{i-1}(t)-a_{i-1}\gamma_i(t) & \\ & (a_i-k_i)\gamma_{i-1}(t) + (-a_{i-1}+k_{i-1})\gamma_i (t)
\end{pmatrix},\]
where $([k_1,\ldots,k_n],\eta)$ is defined by the fibration in the sense of correspondence \ref{prop:correspondence} and we set $a_0=(-1)^{\varepsilon_1}a_n+\eta k_0$ and $k_0=(-1)^{\varepsilon_1+\eta}k_n$.
\item The fibration $\PP(E)\rightarrow X$ is one of GKM manifolds and realizes the GKM fibration $\Gamma\rightarrow B$.
\end{enumerate}
\end{thm}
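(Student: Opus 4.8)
The theorem assembles the constructions of Sections~\ref{sec:constr-sec} and~\ref{S:ObstructionTheory}: the one-skeleton bundle and its gluing have already been built there, equivariant obstruction theory supplies the extension to $X$, and Remark~\ref{R:Equivariant vector bundles is smooth as fuck} makes it smooth. The plan is therefore to (a) fix all remaining parameters of the one-skeleton construction in terms of the free data $(a_1,\ldots,a_n)$, (b) apply Lemma~\ref{L:extension of equivariant vector bundles} and Remark~\ref{R:Equivariant vector bundles is smooth as fuck} to extend and smoothen, and (c) read off the GKM graph of the projectivization. Since the substantive work sits in the preceding sections, what remains is largely bookkeeping; the one point that needs care is that the obstruction-theoretic extension over the free top cell could, a priori, enlarge the one-skeleton of $\PP(E)$, and I expect this to be the main (though mild) obstacle. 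It will be ruled out by the freeness of the $T^2$-action over the interior of the polytope.

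\emph{Fixing the parameters.} Given $(a_1,\ldots,a_n)\in\mathbb{Z}^n$, one sets $b_i=a_{i-1}$ and $c_i=a_{i+1}$; these are exactly the gluing conditions $a_{i+1}=c_i$, $b_{i+1}=a_i$ of Section~\ref{sec:constr-sec} identifying $E_i$ with $E_{i+1}$ along the fiber over $v_{i+1}$ for $i=1,\ldots,n-1$. The cyclic closure condition then fixes $b_1$ (and, dually, $c_n$) to be $b_1=a_0:=(-1)^{\varepsilon_1}a_n+\eta k_0$ with $k_0=(-1)^{\varepsilon_1+\eta}k_n$, which specializes to the product-case relation ($\eta=0$) and the twisted-case relation ($\eta=1$) of Section~\ref{sec:constr-sec}. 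With these choices the computation there gives $A_i^-=A_{i+1}^+$ for $i=1,\ldots,n-1$, while $A_{n+1}^-$ equals $A_1^+$ in the product case and differs from it by interchanging the diagonal entries in the twisted case; in both cases the bundles $E_i=P_i\times_{\U(2)}\CC^2$ glue $T$-equivariantly to a complex rank $2$ bundle $E'\to X_1$. Substituting $b_i=a_{i-1}$ into the matrix $A_i^+$ shows that the $T$-representation on $E'|_{v_i}$ is precisely the one in part~(i).

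\emph{Extension and smoothing.} Here $4k=\dim X=4$, so $k=1$, the torus is $T^2$, and the rank $2$ satisfies $2>k-1=0$; moreover $(X,X_1)$ is exactly the equivariant relative $T^2$-CW complex described in Section~\ref{S:ObstructionTheory}, both in the quasitoric case ($n\geq 3$) and for $X=S^4$ ($n=2$). By Lemma~\ref{L:extension of equivariant vector bundles}, $E'$ extends to a $T$-equivariant complex rank $2$ bundle over $X$, and by Remark~\ref{R:Equivariant vector bundles is smooth as fuck} this bundle may be taken smooth; call it $E$. Since $E|_{X_1}\cong E'$ and the isotropy representation at a fixed point is invariant under equivariant homotopy, part~(i) holds for $E$.

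\emph{Realizing the fibration.} The map $\PP(E)\to X$ is a locally trivial $\CC\PP^1$-bundle with structure group $\U(2)$, hence an equivariant fiber bundle of $T^2$-manifolds, and $\PP(E)$ is orientable. Over the interior of the polytope $P$ (resp.\ of the disk $D$ when $n=2$) the $T^2$-action on $X$ is free, so the induced action on $\PP(E)$ is free there as well; consequently $\PP(E)_1$ is contained in the preimage of $X_1$, namely $\bigcup_{i=1}^n\PP(E_i)$, with overlaps along the fibers over the $v_i$. Inside each invariant submanifold $\PP(E_i)=\PP(E|_{S^2_{e_i}})$, Lemma~\ref{lem:projuebers2} identifies the fixed points $p_i,q_i,p_{i+1},q_{i+1}$ and the four invariant $2$-spheres joining them with the square $\pi^{-1}(e_i)$; the two fiber fixed points over $v_i$ are distinct because the two weights of $E|_{v_i}$ differ by $\alpha_i=k_i\gamma_{i-1}-k_{i-1}\gamma_i\neq 0$ by~\eqref{eq:correspondencefiberwiseinbasis}. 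Gluing these squares along the shared fiber $2$-spheres over $v_1,\ldots,v_n$, exactly as the $E_i$ are glued, yields a finite union of $3n$ invariant $2$-spheres meeting in $2n$ isolated fixed points whose associated labelled graph is, by construction, $\Gamma$. Hence $\PP(E)$ satisfies the GKM conditions with GKM graph $\Gamma$, the base $X$ has GKM graph $B$, and $\PP(E)\to X$ induces the GKM fibration $\pi$, which is part~(ii).
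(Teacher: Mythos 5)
Your proposal is correct and follows essentially the same route as the paper: it fixes the gluing parameters $b_i=a_{i-1}$, $c_i=a_{i+1}$ with the cyclic closure forced by $\eta$ and $\varepsilon_1$ exactly as in Section \ref{sec:constr-sec}, then invokes Lemma \ref{L:extension of equivariant vector bundles} and Remark \ref{R:Equivariant vector bundles is smooth as fuck}, and reads off the GKM graph from Lemma \ref{lem:projuebers2}. The only addition is your explicit remark that freeness of the action over the interior of the polytope prevents the top-cell extension from enlarging the one-skeleton, a point the paper leaves implicit.
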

We conclude this section with the proof of the statement on the isotropy groups of the $T$-action on $\PP(E)$ in part 1.\ of Theorem \ref{thm:mainthm}. Recall first that the weights of the isotropy representation at any of the two fixed points over a vertex $v_i$ of $B$ are (up to sign) $\gamma_{i-1}$, $\gamma_i$ and $\alpha_i = k_i\gamma_{i-1} - k_{i-1}\gamma_i$. If any $k_j\neq \pm 1$, we can therefore find a fixed point in $\PP(E)$ such that two of the adjacent weights, considered as characters $T\to S^1$, have a nontrivial common kernel, which then occurs as an isotropy group near the fixed point. Conversely, we assume that all $k_i=\pm 1$. As $\PP(E)$ fibers equivariantly over the $T$-manifold $X$, and in $X$ nontrivial stabilizers occur only in the one-skeleton $X_1$, any nontrivial stabilizer in $\PP(E)$ necessarily occurs in $\pi^{-1}(X_1)$. This space however is, in the notation of Section \ref{sec:constr-sec}, the union of four-dimensional $T^2$-manifolds $\PP(E_i)$ that fiber over $S^2$. By assumption on the $k_i$ the $T$-action on $\PP(E_i)$ is effective. Hence, the $\PP(E_i)$ are torus manifolds with vanishing odd-degree cohomology, which by \cite[Theorem 4.1]{MasudaPanov} have only connected isotropy groups. See Remark \ref{rem:connectedstabs} for an example of a linear realization of a total space of a GKM fibration where not all $k_i=\pm 1$.

\section{Geometric structures on the realization}\label{S:GeometricStructures}

\subsection{(Stably) almost complex structures}\label{sec: sacs}

The realization
\[\mathbb{CP}^1\longrightarrow \PP(E)\longrightarrow X\]
constructed in the previous sections is a fibration with structure group $\U
(2)$. Let $V_F\subset T\PP(E)$ be the subbundle consisting of the tangent spaces of all the fibers. Since each fiber can be identified with $\mathbb{CP}^1$ uniquely up to elements from $\U(2)$, we see that a $\U(2)$-invariant almost complex structure on $\mathbb{CP}^1$ induces on $V_F$ the structure of a complex $T$-vector bundle. A complement of $V_F$ in $\PP(E)$ with respect to a $T$-invariant metric can be identified with the pullback $V_X$ of the tangent bundle of $X$ along the projection $\PP(E)\rightarrow X$. As $X$ is quasitoric, it is $T$-invariantly stably almost complex (see \cite[Corollary 7.3.15]{MR3363157}). It follows that $T\PP(E)=V_F\oplus V_X$ carries a $T$-invariant stable almost complex structure. This finishes the proof of part 1 of Theorem \ref{thm:mainthm}.

In case the graph fibration is one of signed graphs, $X$ carries a $T$-invariant almost complex structure by Proposition \ref{prop:dim4realization}. As above it follows that $\PP(E)$ carries an invariant almost complex structure compatible with the map to $X$. For the proof of part 2 of Theorem \ref{thm:mainthm}, it remains to check that the signed GKM graphs of $\PP(E)\rightarrow X$ with respect to the almost complex structures agree with the given fibration of signed graphs $\Gamma\rightarrow B$. The construction of $X$ and its almost complex structure (see Proposition \ref{prop:dim4realization}) can be carried out such that its signed GKM graph is precisely $B$. Since $\PP(E)\rightarrow X$ is compatible with the complex structures, it follows that the given labelling function on oriented edges of ${\Gamma}$ agrees with that induced by the almost complex structure on $\PP(E)$ when applied to oriented basic edges. On (oriented) fiber edges they a priori only agree up to sign. However we observe that, since both labelling functions on oriented edges admit a compatible connection, the signs in the fiber edges either agree everywhere or nowhere. In the first case we are done. In the second case we may apply the automorphism of $\Gamma$ that interchanges the vertices in each fiber. This transforms one labelling function into the other so the proof of part 2 is complete.

\subsection{Symplectic and Kähler structures} \label{sec:sympKaehler}

Suppose $E \to X$ is a complex vector bundle of rank $2$ and $X$ a toric $4$-manifold. First note that
$X$ is a smooth projective variety \cite[Proposition 5.2.2]{MR3363157} and thus $X$ admits in
particular a Kähler structure. We denote by $\omega_X$ the corresponding Kähler form. Since $X$ is
projective we deduce from \cite[Theorem 9]{MR0137711} that $E$ is algebraic if the determinant bundle
$\det E$ is algebraic. But since $X$ is toric, $H^2(X) = H^{1,1}(X)$, so that $\det E$ has a holomorphic structure by the Lefschetz theorem on $(1,1)$-classes, cf. \cite{MR0033557}. With the GAGA principle
\cite{MR0082175} we infer that $\det E$ is algebraic, hence $E$. 
Since $E$ is algebraic, the manifold
$\mathbb P(E)$ is a smooth projective variety. 

From the discussion above we obtain

\begin{prop}\label{P: projectivization is complex manifold}
For every rank $2$ complex vector bundle $E \to X$ the projectivization $\mathbb P(E)$
  is a smooth projective variety such that $\pi \colon \mathbb P(E) \to X$ is a holomorphic map.
\end{prop}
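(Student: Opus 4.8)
The plan is to reduce the whole statement to a single assertion: that the rank-$2$ topological complex vector bundle $E$ admits an algebraic structure over $X$. Once this is granted, $\PP(E)$ is the projectivization of an \emph{algebraic} vector bundle over the smooth projective variety $X$ (recall that a toric $4$-manifold is smooth projective by \cite[Proposition 5.2.2]{MR3363157}). Such a projectivization is a Zariski-locally trivial $\PP^1$-bundle, hence smooth over $X$ and therefore a smooth variety; it is projective because it is projective over the projective base $X$; and the structure map $\pi\colon\PP(E)\to X$ is then a morphism of varieties, in particular holomorphic. So everything hinges on the algebraicity of $E$.

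To obtain that, I would first pass to the determinant line bundle. Since $X$ is toric, its cohomology is concentrated in even degrees and $H^{2,0}(X)=H^{0,2}(X)=0$, so $H^2(X;\CC)=H^{1,1}(X)$; in particular the integral class $c_1(\det E)\in H^2(X;\ZZ)$ maps to a class of type $(1,1)$. By the Lefschetz theorem on $(1,1)$-classes \cite{MR0033557} it is then the first Chern class of a holomorphic line bundle, so $\det E$ carries a holomorphic structure, and GAGA \cite{MR0082175} on the projective variety $X$ upgrades this to an algebraic structure. Now I would invoke Schwarzenberger's criterion for surfaces \cite[Theorem 9]{MR0137711}: on a nonsingular projective surface a topological complex vector bundle of rank $2$ admits an algebraic structure whenever its determinant does. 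Applied to $E\to X$, this yields that $E$ is algebraic, which together with the first paragraph finishes the argument.

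The only real obstacle is precisely the passage from a merely topological $\CC^2$-bundle to an algebraic one, which in higher rank can genuinely fail; here it is forced through by two special features of the situation — the Hodge-theoretic simplicity of toric varieties, which makes $\det E$ automatically algebraic, and the fact that we work in complex rank $2$ over a \emph{surface}, exactly the regime where Schwarzenberger's determinant criterion applies. Beyond correctly assembling these cited inputs, the remaining checks (smoothness and projectivity of $\PP(E)$ of an algebraic bundle, and that $\pi$ is a morphism) are entirely standard and require no further work.
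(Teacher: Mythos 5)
Your proof is correct and follows essentially the same route as the paper: algebraicity of $\det E$ via the Lefschetz $(1,1)$-theorem and GAGA on the smooth projective toric surface $X$, followed by Schwarzenberger's rank-$2$ determinant criterion to conclude that $E$ itself is algebraic, whence $\PP(E)$ is a smooth projective variety and $\pi$ is holomorphic. The same references and the same logical chain appear in the paper's argument, so there is nothing to add.
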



In the remaining part of this section we will construct the symplectic and K\"ahler structure from Theorem \ref{thm:mainthm}.

Now let $E\rightarrow X$ be a complex, equivariant $T$-bundle. Furthermore let $h$ be $T$-invariant Hermitian metric on $E$ and let $\omega_X$ be the $T$ invariant K\"ahler form on $X$.
Let $L \to \mathbb P(E)$ be the tautological bundle (i.e.\ $L$
 restricted to each fiber of $\mathbb P(E)$ is the tautological bundle of $\CC\PP^{1}$).
Since by construction $L$ is a subbundle of $\pi^\ast(E)$, the metric $h$ can
be restricted to $L$ and it also induces a metric $h^*$ on the dual bundle $\overline L$. The Chern form $\omega_V$ of the Hermitian line bundle $(\overline L,h^*)$ has the property that it restricts to the Fubini-Study form (associated to the metric $h$) on every fiber of $\mathbb{P}(E) \to X$ and that furthermore
\[
  \omega_{K} := \omega_V + C \cdot \pi^{\ast}(\omega_X)
\]
is a Kähler form on $\mathbb{P}(E)$ for sufficiently large $C$ (see \cite[Section 3.3.2]{Voisin}).

The pullback $\pi^{\ast}(\omega_X)$ is $T$-invariant but $\omega_V$ and hence also $\omega_K$ in general are not. However since $T$ is compact, we can average $\omega_V$ over $T$, obtaining another form $\omega_F$ which
is $T$-invariant. As $h$ is $T$-invariant, the $T$ action preserves the Fubini-Study form on the fibers, so for any fiber of $\PP(E)\rightarrow X$ and $t\in T$ we have $i^*(t^*\omega_V)=i^*\omega_V$, where $i$ is the fiber inclusion. This implies that also the averaged form $\omega_F$ restricts to the Fubini-Study form on every fiber. The latter is in particular symplectic, thus after possibly adjusting the constant $C$ the form
\[
  \omega_S := \omega_F + C \cdot \pi^{\ast}(\omega_X)
\]
is a $T$-invariant symplectic form on $\mathbb{P}(E)$. Since $H^1(\mathbb{P}(E);\ZZ)=0$ we infer
from \cite[Addendum to Theorem 28.1]{MR770935} that the action is in fact Hamiltonian.

\begin{prop}\label{prop: symplectomorphic forms}
The Kähler form $\omega_K$ and the invariant symplectic form $\omega_S$ are symplectormorphic.
\end{prop}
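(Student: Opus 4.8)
The plan is to deduce the statement from a Moser stability argument applied to the straight-line homotopy from $\omega_K$ to $\omega_S$. The first point is a bookkeeping one: by construction $\omega_V + C\pi^*\omega_X$ is K\"ahler and $\omega_F + C\pi^*\omega_X$ is symplectic for all sufficiently large $C$, so I would fix once and for all a single constant $C$ large enough that, in addition, every form $\omega_t := (1-t)\,\omega_V + t\,\omega_F + C\pi^*\omega_X$, $t\in[0,1]$, is symplectic, and from now on take $\omega_K = \omega_V + C\pi^*\omega_X$ and $\omega_S = \omega_F + C\pi^*\omega_X$ with this same $C$. That such a $C$ exists is the usual compactness estimate: each $\omega_t$ is closed, and since both $\omega_V$ and $\omega_F$ restrict to the Fubini--Study form on every fibre of $\PP(E)\to X$, so does their convex combination; hence $\omega_t$ is nondegenerate on the vertical subbundle $V_F$, while $\pi^*\omega_X$ is nondegenerate on a complement of $V_F$, and for $C$ large (uniformly in the base point and in $t\in[0,1]$ by compactness of $\PP(E)\times[0,1]$) the sum is nondegenerate everywhere.

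Second, I would check that the path $\omega_t$ stays in a fixed de Rham cohomology class. The only nontrivial ingredient is $[\omega_F] = [\omega_V]$ in $H^2(\PP(E);\RR)$: since $T$ is connected, each group element acts on $\PP(E)$ by a diffeomorphism isotopic to the identity and hence trivially on cohomology, so $[t^*\omega_V] = [\omega_V]$ for every $t\in T$; writing $\omega_F$ as the average of $t^*\omega_V$ over normalized Haar measure and pulling the cohomology class through the integral gives $[\omega_F]=[\omega_V]$. Therefore $[\omega_t] = [\omega_V] + C[\pi^*\omega_X]$ is independent of $t$, and $\tfrac{d}{dt}\omega_t = \omega_F - \omega_V$ is exact and independent of $t$.

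With this in hand the conclusion is the standard Moser argument on the compact manifold $\PP(E)$: pick a $1$-form $\sigma$ with $d\sigma = \omega_F - \omega_V$, define the time-dependent vector field $Y_t$ by $\iota_{Y_t}\omega_t = -\sigma$ using nondegeneracy of $\omega_t$, and let $\phi_t$ be its flow (defined on all of $[0,1]$ by compactness); then $\tfrac{d}{dt}(\phi_t^*\omega_t) = \phi_t^*\big(\mathcal{L}_{Y_t}\omega_t + \dot\omega_t\big) = \phi_t^*\big(d\iota_{Y_t}\omega_t + d\sigma\big) = \phi_t^*\big(-d\sigma + d\sigma\big) = 0$, so $\phi_1$ is a diffeomorphism with $\phi_1^*\omega_S = \omega_K$. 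I do not expect any genuine obstacle: the substance of the argument is just the verification $[\omega_F]=[\omega_V]$ (connectedness of $T$) together with the large-$C$ nondegeneracy estimate, after which Moser's theorem applies verbatim; the only thing one has to be slightly careful about is to use the \emph{same} constant $C$ for $\omega_K$ and $\omega_S$, so that the two forms are actually cohomologous and Moser's hypothesis is met. Note that this yields only a (non-equivariant) symplectomorphism, which is exactly as expected and is in fact the point that makes the non-K\"ahler phenomena discussed later possible.
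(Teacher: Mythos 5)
Your proof is correct and follows essentially the same route as the paper: the path $(1-t)\omega_V+t\omega_F+C\pi^*\omega_X$ is exactly the paper's $\omega_V+t\,d\eta+C\pi^*\omega_X$, the key points (averaging preserves the de Rham class, both forms restrict to the Fubini--Study form on fibres so the whole path is fibrewise symplectic and hence symplectic for large $C$) are the same, and the conclusion is via Moser's trick. You merely spell out the Thurston-type nondegeneracy estimate and the Moser flow in more detail than the paper does.
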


\begin{proof}
Since averaging a closed form over a compact group does not change its deRham class there is
  a $1$-form $\eta$ such that $\omega_F = \omega_V + d\eta$. Because $\omega_V$ and $\omega_F$
  agree on the fibers of $\mathbb{P}(E) \to X$ it follows that $d\eta$ restricts to $0$. In particular, for any $t \in \RR$ the forms $\omega_V
  +td\eta$ all restrict to symplectic forms on the fibers. Therefore the forms
  $\omega_t := \omega_V +t d\eta + C \cdot \pi^*(\omega_X)$ are symplectic for $t \in [0,1]$ with
  an eventually bigger constant $C$. Thus $\omega_K$ and $\omega_S$ are joined by a path of
  symplectic forms in the same deRham class and by Moser's trick they are symplectomorphic.
\end{proof}

This completes the proof of Theorem \ref{thm:mainthm}. In the next section we will show that GKM fibrations of product type always admits a toric symplectic realization, as announced in Remark \ref{rem:mainthm}.

\subsection{Fibrations of product type: the toric case}\label{sec:product type fibrations}
In this section we consider a signed GKM fibration $\pi:\Gamma\to B$ of product type, where $B$ is the boundary of a two-dimensional Delzant polytope $P\subset \mft^*$. Let $X$ be the four-dimensional toric manifold with Delzant polytope $P$.

In this setting, we can refine the construction in Section \ref{sec:constr-sec}. The $T$-equivariant complex rank $2$ vector bundle $E\to X_1$ over the one-skeleton of $X$ constructed there is, because the graph $\Gamma$ is of product type, the Whitney sum of two $T$-equivariant complex line bundles $L_i\to X_1$, $i=1,2$.  By Lemma \ref{L:extension of equivariant vector bundles} we can extend not only the bundle $E$ equivariantly to all of $X$, but also the bundles $L_i$. In this way, we obtain an extension  $E\to X$ which is globally the sum of two equivariant line bundles.

Now, any complex line bundle admits a canonical circle action, so that $E$ admits a canonical auxiliary action of a two-dimensional torus, which commutes with the $T$-action. Note that the diagonal action becomes trivial after passing to the projectivization $\PP(E)$, but the fiberwise circle action given in homogeneous coordinates by $t\cdot [z:w]=[tz:w]$ defines an effective circle action on $\PP(E)$ commuting with the $T$-action. In total we obtain a $T^3$-action on $\PP(E)$.

As explained in Section \ref{sec:sympKaehler}, the projectivization $\PP(E)$ admits a $T$-invariant symplectic structure. We can modify this construction to make the symplectic form invariant also under the $T^3$-action. We thus see that $\PP(E)$ admits the structure of a toric symplectic manifold. Note that toric structures on projectivizations of sums of circle bundles have been considered before, see e.g.\ \cite{ChoiPark}.

\begin{rem}
One can construct a $6$-dimensional toric manifold such that the restriction of the action to a two-dimensional subtorus has the prescribed GKM graph $\Gamma$ via Delzant's theorem:

Let $T^3 = T^2\times S^1$, with Lie algebra $\mft^3 = \mft\oplus \RR$ with integral lattice $\ZZ^2\times \ZZ$, and dual $(\mft^3)^* \cong \mft^* \oplus \RR \cong \RR^3$. We construct a Delzant polytope $Q\subset (\mft^3)^*$ as follows: in the hyperplane $z=0$ it contains the polytope $P\subset \mft^*\cong \mft^*\oplus \{0\}\subset (\mft^3)^*$. As in Section \ref{sec:constr-sec}, we let $v_1,\ldots,v_n$ be the vertices of $P$, and $\gamma_i$ the label of the edge $e_i$ from $v_i$ to $v_{i+1}$. The only other vertices of $Q$ are the elements $w_i:=v_i + (\alpha_i,1)$, all contained in the hyperplane $z=1$.

Then $Q$ is a Delzant polytope: The slopes of the edges emerging from the vertex $v_i$ are $-\gamma_{i-1}, \gamma_i$, and $(\alpha_i,1)$. These three vectors form a basis of $\ZZ^3$ because the $\gamma_{i-1}$ and $\gamma_i$ form a basis of $\ZZ^2$. The edge from $w_i$ to $w_{i+1}$ points in direction $v_{i+1}-v_{i} + \alpha_{i+1} - \alpha_{i}$, which is a multiple of $\gamma_i$; this shows that the Delzant condition is also satisfied at the vertices $w_i$.

By construction, the six-dimensional toric symplectic manifold with Delzant polytope $Q$ satisfies that the restriction of the $T^3$-action to $T^2\cong T^2\times \{1\}$ is GKM, with (signed) GKM graph $\Gamma$.
\end{rem}

\section{Non-K\"ahler GKM graphs}
\label{sec:nonkaehler}

The goal of this section is to show that certain GKM actions do not admit compatible K\"ahler structures. The necessary criteria to show that signed GKM structures can not come from such an action were already described in Section \ref{sec:defsec}. However, in order to show the non-existence of such structures for a given action, there is more work to do. The results of this section are collected in the following

\begin{thm}\label{thm:signed-classification}
Let $\Gamma\rightarrow B$ be a signed fibration of GKM graphs of twisted type such that $\Gamma$ is $3$-valent and $B$ is the boundary of a 2-dimensional Delzant polytope with $n$ vertices (labels of the edges are with respect to a rank $2$ lattice $\mathbb{Z}_\mft^*$). We assume that $\Gamma$ has $n-1$ interior vertices (cf.\ Definition \ref{defn:interiorvertex} and Corollary \ref{cor:nrofintvertices}). Then, up to isomorphism, there are up to three structures of signed GKM graphs which are compatible with the underlying unsigned GKM graph of $\Gamma$. They have the following properties:

\begin{enumerate}[I)]
\item the signed graph $\Gamma$ itself. This signed structure can not be realized by a complex structure compatible with an invariant Kähler form.
\item the signed structure which arises from $\Gamma$ by changing the sign of the weights of every second pair of basic edges. It exists only if $n$ is even. If furthermore $n\neq 4$ then this signed structure is not realized by a Hamiltonian action.
\item a unique signed structure where no two basic edges over the same edge have the same weights. It exists only if $k_i=\pm 1$ for $i=1,\ldots,n$ (as in Section \ref{sec:graphs in dim 6}) and is not realized by a Hamiltonian action.
\end{enumerate}
\end{thm}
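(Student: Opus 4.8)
The plan is to reduce the problem to a concrete combinatorial classification of signed structures on a fixed $3$-valent graph $\Gamma$, and then rule out each case geometrically using the obstructions assembled in Section~\ref{sec:defsec}. First I would set up the underlying graph combinatorially: by Corollary~\ref{cor:nrofintvertices}, having $n-1$ interior vertices is the maximal possible number, so by Proposition~\ref{prop:interior} the signs of the $k_i$ must alternate in a very rigid way; in fact, reading around the $n$-gon $B$, the condition "$k_{i-1},k_i$ have the same sign for all but exactly one $i$" (with $k_0=-k_n$ because of the twisted type) pins down the sign pattern of $(k_1,\dots,k_n)$ up to a global sign. This fixes the unsigned GKM graph $\Gamma$ up to isomorphism (its graph type is the M\"obius band, and the axial function is determined up to sign in each fiber via \eqref{eq:correspondencefiberwiseinbasis}). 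Then I would enumerate all compatible signed structures: a compatible signed structure is a choice of lift of each $\alpha(e)\in\ZZ^2/\pm$ to $\ZZ^2$ such that a compatible connection exists in the signed sense. The key local constraint is condition (2) in the definition of an abstract signed GKM graph, $\alpha(\nabla_e f)=\alpha(f)+c\alpha(e)$ (no $\pm$), which, once the sign of one basic edge in a given fiber-column is chosen, propagates along the horizontal ladder and forces the signs of the neighbouring basic edges and of the vertical edges. Tracking this propagation around the whole M\"obius strip — in particular through the single "bad" rung where the sign pattern of the $k_i$ switches — leaves exactly the finitely many possibilities listed as I), II), III).

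Concretely: starting from the given signed graph $\Gamma$ (case I), the only freedom is to flip the common sign of the two basic edges over some edge $e_i$ of $B$. Propagating this flip via the no-$\pm$ connection relation, one sees it forces a flip over $e_{i-1}$ and over $e_{i+1}$ as well unless compensated by a sign change in the fiber edges — here the parity of $n$ enters, because consistent flipping of "every second" basic-edge-pair closes up only when $n$ is even, giving case II; and the possibility of swapping the two fiber vertices over a column (so that the two basic edges over $e_i$ get \emph{different} weights) survives the connection compatibility exactly when all the $k_i=\pm 1$, since otherwise the resulting fiber weight $k_i\gamma_{i-1}-k_{i-1}\gamma_i$ cannot be matched to a primitive lift compatibly — this is case III. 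The main work in this step is the bookkeeping of signs around the twisted band; I expect this to be the most delicate but essentially mechanical part.

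With the classification in hand, the non-realizability statements follow from the three hierarchical obstructions. For I), the argument is Corollary~\ref{cor:Tolmanextension}: I would exhibit a pair of adjacent edges at an interior vertex of $\Gamma$ — say the two basic edges together with a fiber edge at such a vertex — whose only $2$-regular signed GKM subgraph containing them is one whose weight cones at every vertex fill all of $\RR^2$ (the graph from Remark~\ref{rem:symplecticobstruction}), hence is not of polytope type; this is exactly the phenomenon illustrated by the linear realization in Section~\ref{sec:defsec} (K\"ahler subsection), and I would generalize that computation from the $6$-vertex picture to arbitrary $n$, referring forward to Section~\ref{subsec:case I}. For II) and III), I would use the stronger fact that these signed structures cannot even come from a \emph{Hamiltonian} action: for III) this is immediate from Remark~\ref{rem:symplecticobstruction}, since there "no two basic edges over the same edge have the same weights" forces, at the vertices where the sign pattern is generic, that the cone spanned by the three outgoing weights is all of $\mft^*$, contradicting the existence of a linear realization. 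For II), when $n\neq 4$, I would invoke the convexity/linear-realization obstruction: the combinatorics of the M\"obius band with $n-1$ interior vertices force some outer edge of any candidate moment polytope not to be covered by an edge of $\mu(M_1)$, or equivalently some required $2$-regular subgraph fails the polytope-type condition — this is the content deferred to Section~\ref{subsec:case II}, and the exceptional value $n=4$ is precisely where the potential polytope degenerates enough that the obstruction vanishes (indeed $\U(3)/T^3$ lives there, cf.\ Remark~\ref{rem:mainthm}). The hardest single step is the last one — showing case II is non-Hamiltonian for all $n\neq 4$ — because it requires understanding the global shape of every possible linear realization of the fixed signed graph, not just a local cone condition; I would handle it by the same strategy as in Remark~\ref{rem:symplecticobstruction}, reducing to a finite check after normalizing the positions of a few vertices.
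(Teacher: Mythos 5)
Your overall architecture matches the paper's: classify the compatible signed structures by propagating sign choices through the connection around the M\"obius band (the paper organizes this by whether the two basic edges over a given base edge carry equal or opposite weights, cf.\ Lemmas \ref{lem:even signed structure} and \ref{lem:twistedlem}, but the content is the same), then kill case I via Corollary \ref{cor:Tolmanextension} and cases II, III via the linear-realization/convexity obstruction. However, there is a genuine gap in the mechanism you propose for the non-realizability statements, and it is precisely the idea the paper's argument hinges on: the \emph{winding number} of a locally convex weight sequence (Lemma \ref{lem:winding number}). In case I, the unique $2$-regular GKM subgraph through the two basic edges at an interior vertex is $\Gamma$ with all fiber edges deleted, i.e.\ a connected double cover of the $n$-gon; this subgraph is \emph{not} "a graph whose weight cones fill $\RR^2$" (a $2$-valent vertex never has a full cone), and it is locally convex everywhere — the reason it fails to be of polytope type is global: its weight sequence has winding number $2$ instead of $1$. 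Your description conflates the local full-cone obstruction (which is what rules out case III, and there you need that \emph{all} vertices are interior, not just the "generic" ones — a single full cone at a non-extremal vertex contradicts nothing) with this global obstruction.

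The gap is most serious in case II. The paper shows (Proposition \ref{prop:CaseII nicht symplectic}) that the case II structure has exactly \emph{one} interior vertex, so a putative moment polytope boundary must be a closed path through all $2n-1$ exterior vertices avoiding the interior one; combinatorially there is only one such path, alternating basic and fiber edges, and Lemma \ref{lem:winding number} computes its winding number to be at least $(n-2)/2$, which differs from $1$ exactly when $n\neq 4$. Your proposed substitute — "reducing to a finite check after normalizing the positions of a few vertices" in the spirit of Remark \ref{rem:symplecticobstruction} — cannot work as stated: the base $B$ ranges over all Delzant $n$-gons for all $n$, so the family of candidate linear realizations is not finite and not normalizable to finitely many configurations; one needs a numerical invariant of the weight sequence that is independent of the actual edge lengths and of the particular Delzant polytope, and that is exactly what the winding number provides (it also transparently explains why $n=4$ is exceptional, since $(n-2)/2=1$ there). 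Without identifying this invariant, neither the case I conclusion ("the basic-edge double cover is not of polytope type") nor the case II non-Hamiltonianity can be completed along the lines you sketch.
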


\begin{cor}
A GKM action whose unsigned GKM graph is that of $\Gamma$ as above with $n\neq 4$ does not admit an invariant K\"ahler structure.
\end{cor}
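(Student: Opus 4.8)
The plan is to deduce the corollary directly from Theorem \ref{thm:signed-classification} by a short logical argument. Suppose, for contradiction, that $M$ is a compact manifold carrying an invariant Kähler structure whose underlying unsigned GKM graph is that of $\Gamma$. Then in particular $M$ carries an invariant almost complex structure compatible with the symplectic form, which induces a compatible \emph{signed} GKM structure on the unsigned graph of $\Gamma$. By Theorem \ref{thm:signed-classification}, this induced signed structure must be one of the (at most three) listed types I, II, or III, \emph{up to isomorphism}. The Kähler hypothesis would then force one of these three signed structures to arise from an invariant Kähler action, contradicting the three individual statements in the theorem. So the task reduces to ruling out each type in turn, for $n \neq 4$.

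First I would handle type I: the theorem states outright that the signed graph $\Gamma$ itself cannot be realized by a complex structure compatible with an invariant Kähler form, so this case is immediate. Next, types II and III: the theorem asserts that (under the respective existence conditions, and with $n \neq 4$ in the case of type II) these signed structures are \emph{not realized by a Hamiltonian action}. Since an invariant Kähler structure is in particular an invariant symplectic structure with Hamiltonian torus action — the action being automatically Hamiltonian because $H^1(M;\mathbb{R}) = 0$ for our simply-connected realizations, as used in Section \ref{sec:sympKaehler} — any Kähler action is Hamiltonian, so a signed structure that is not realized by any Hamiltonian action is \emph{a fortiori} not realized by a Kähler action. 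This disposes of types II and III. The only gap is the exceptional value $n = 4$ for type II, which is precisely why the hypothesis $n \neq 4$ appears in the corollary; for $n \neq 4$ all three types are excluded and no compatible signed structure survives, hence no invariant Kähler structure can exist.

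I expect the main (and really only) subtlety to be bookkeeping about what "compatible signed structure'' means and why every invariant almost complex structure yields one: this is exactly the content of the discussion preceding Definition \ref{defn:fiberwise signed} in Section \ref{sec:defsec}, namely that an invariant almost complex structure gives the isotropy weights well-defined signs and the resulting labelling admits a compatible connection. Combined with the observation (made in the paper) that the existence of a compatible signed structure is an obstruction to an invariant almost complex structure, the corollary follows formally. No genuine new computation is needed — the weight of the argument is entirely in Theorem \ref{thm:signed-classification}, and the corollary is its clean packaging into a statement about the unsigned graph alone.
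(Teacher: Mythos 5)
Your argument is correct and is exactly how the paper intends the corollary to follow from Theorem \ref{thm:signed-classification}; the paper gives no separate proof, treating it as the immediate packaging of the three cases you describe. One minor remark: since the corollary concerns an arbitrary GKM action with this graph (not only the paper's realizations), the Hamiltonian property of a hypothetical invariant K\"ahler structure is better justified by the existence of fixed points (Frankel's theorem) than by simple connectivity, but this does not affect the conclusion.
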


\begin{rem}
We have seen that case I is always realized as a Hamiltonian action. For
even $n$, case II is realizable as an equivariant fibration of almost complex manifolds since it is a signed fibration
over the signed base graph which arises from $B$ by changing the sign of
every second edge. Note that by the discussion in Section
\ref{S:GeometricStructures} we may use the same underlying $T^2$-manifold
for both cases and only need to vary the almost complex structures. We do
not know about the realizability of case III.
\end{rem}

\subsection{Case I}\label{subsec:case I}

We show that the signed structure $\Gamma$ is indeed not realized by an invariant K\"ahler form.

\begin{lem}\label{lem:connectinteriorvertex}
If two fibers in $\Gamma$ are connected by a basic edge and both contain an interior vertex, then the interior vertices are connected by a basic edge.
\end{lem}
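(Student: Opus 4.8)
### Proof strategy for Lemma \ref{lem:connectinteriorvertex}

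The plan is to work entirely on graph level using the explicit description of the weights from Proposition \ref{prop:interior} and its proof. Suppose the fiber over $v_i$ is connected to the fiber over $v_{i+1}$ by a basic edge (so the edge $e_i$ in $B$ between $v_i$ and $v_{i+1}$), and both fibers contain an interior vertex. Label the vertices in the fiber over $v_i$ by $p_i,q_i$ as in the proof of Proposition \ref{prop:interior}, with weight sets
\[
\{-\gamma_{i-1},\ \gamma_i,\ k_i\gamma_{i-1}-k_{i-1}\gamma_i\}\quad\text{at }p_i,\qquad
\{-\gamma_{i-1},\ \gamma_i,\ -k_i\gamma_{i-1}+k_{i-1}\gamma_i\}\quad\text{at }q_i,
\]
and similarly over $v_{i+1}$ with the shift $i\mapsto i+1$. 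By the criterion recalled in that proof (the cone spanned by a basis $e_1,e_2$ together with $ae_1+be_2$ is all of $\RR^2$ iff $a,b<0$), the fiber over $v_i$ has an interior vertex precisely when $k_{i-1}$ and $k_i$ have the same sign; in that case exactly one of $p_i,q_i$ is interior, namely the one carrying the weight $k_i\gamma_{i-1}-k_{i-1}\gamma_i$ with both coefficients negative after adjusting the overall sign of that weight. Concretely, I would first normalize: pick the sign of the fiber weight $\alpha_i$ so that its expansion in the basis $\{-\gamma_{i-1},\gamma_i\}$ has two negative coefficients — then the interior vertex over $v_i$ is the one whose third weight is this $\alpha_i$, and the exterior vertex carries $-\alpha_i$.

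The key step is then to track which vertex the basic edge over $e_i$ connects to which. The basic edges over $e_i$ respect the connection compatible with the GKM fibration: transporting the fiber edge at $v_i$ along $e_i$ gives the fiber edge at $v_{i+1}$, and by compatibility of the connection the weights satisfy $\alpha_i\equiv\alpha_{i+1}\bmod\gamma_i$ (this is exactly Equation \eqref{eq:correspondencefiberwiseinbasis} together with the congruences in the proof of Proposition \ref{prop:correspondence}). So the basic edge over $e_i$ joins the vertex at $v_i$ whose fiber weight is (the chosen representative) $\alpha_i$ to the vertex at $v_{i+1}$ whose fiber weight is $\alpha_{i+1}$, and these two representatives are congruent mod $\gamma_i$. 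I would then check that, having normalized $\alpha_i$ to have two negative coefficients in the basis $\{-\gamma_{i-1},\gamma_i\}$, the congruent representative $\alpha_{i+1}$ automatically has two negative coefficients in the basis $\{-\gamma_i,\gamma_{i+1}\}$: writing $\gamma_{i-1}=-\gamma_{i+1}+d_i'\gamma_i$ and using $\alpha_i=k_i\gamma_{i-1}-k_{i-1}\gamma_i$, $\alpha_{i+1}=k_{i+1}\gamma_i-k_i\gamma_{i+1}$, the sign of the $\gamma_{i+1}$-coefficient of $\alpha_{i+1}$ is $-\mathrm{sign}(k_i)$, which matches the sign condition coming from the normalization of $\alpha_i$ (whose $\gamma_{i-1}$-coefficient, up to the sign flip in the basis vector $-\gamma_{i-1}$, is governed by $k_i$). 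Thus the endpoint of the basic edge over $e_i$ at the $v_{i+1}$-fiber is precisely the interior vertex there. The mod-$\gamma_i$ bookkeeping needs to be done once carefully with the correct signs, and that is the only place where a short computation is unavoidable.

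The main obstacle I anticipate is purely a matter of sign conventions: there are several independent sign choices in play (the orientation of each fiber edge, the representative chosen for each $\gamma_i$, the twisted-type sign $\varepsilon_i$ when $e_i=e_n$), and one must make sure the normalization ``two negative coefficients'' is compatible with the congruence transport in a way that does not depend on these choices. I would handle this by fixing, once and for all, the signs as in Remark \ref{rem:correspondencedata} (so $\gamma_{i-1}\equiv-\gamma_{i+1}\bmod\gamma_i$) and the fiber orientations as in the proof of Proposition \ref{prop:correspondence}, then observe that the statement of the lemma is invariant under the remaining global sign change (which swaps $p_i\leftrightarrow q_i$ simultaneously in every fiber, by Lemma \ref{lem:different signed structures}), so it suffices to verify it for one choice. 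With conventions pinned down, the argument reduces to the single observation about signs of coefficients under a change of basis of the form $\gamma_{i-1}\mapsto -\gamma_{i+1}+d_i'\gamma_i$, which is elementary.
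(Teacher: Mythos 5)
Your proposal is correct and follows essentially the same route as the paper: both arguments rest on the cone criterion (interior $\Leftrightarrow$ both coefficients negative in the basis of outgoing basic weights) together with the fact that the congruence $\alpha_{i+1}\equiv\alpha_i \bmod \gamma_i$ preserves the coefficient of the far basic weight, so the relevant sign propagates across the basic edge. The paper phrases this locally (writing $\alpha = k\gamma + l\gamma'$ at an interior vertex and observing that the vertex \emph{not} adjacent to it across the basic edge has fiber weight with $\gamma''$-coefficient $-l>0$, hence is exterior), which sidesteps the global sign conventions of Remark \ref{rem:correspondencedata} that your last paragraph is devoted to managing, but the content is identical.
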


\begin{proof}
Let $p$ be an interior fixed point of $\Gamma$. Let $e$ be a basic edge emanating from $p$ with endpoint $q$ and weight $\gamma$. Also let $\gamma'$ and $\gamma''$ be the weights of the other basic edges emanating from $p$ and $q$ and let $\alpha$, $\alpha'$ be the weights of the fiber edges emanating from $p$ and $q$. Through the connection we obtain
$\alpha=k\gamma+l\gamma'$ and $\alpha'=l\gamma''+m\gamma$ for some $k,l,m\in\mathbb{Z}$. As $p$ is an interior vertex we have $k,l<0$. This implies that the second vertex $q'$ in the fiber containg $q$ has emanating weights $\gamma''$, $-\gamma$, and $-l\gamma''-m\gamma$. Since $-l>0$ we deduce that $q'$ is an exterior vertex. We conclude that if the fiber contains an interior vertex, then $q$ is interior.
\end{proof}

\begin{lem}\label{lem:convexpath}
Let $p$ be an interior vertex of $\Gamma$. Further let $e,e'$ be the oriented basic edges emanating from $p$, let $q$ be the endpoint of $e'$ and $e_F$ be the fiber edge emanating from $q$. Then there is no $2$-valent GKM subgraph of $\Gamma$ that contains $e$, $e'$, and $e_F$.
\end{lem}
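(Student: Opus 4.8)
The plan is to derive a contradiction by analysing the weights around the putative $2$-valent GKM subgraph $\Gamma'$ containing $e$, $e'$, and $e_F$. Write $\gamma, \gamma'$ for the weights of $e, e'$ (oriented away from $p$), and let $\alpha$ be the weight of the fiber edge $e_F'$ emanating from $p$; since $p$ is an interior vertex, after fixing the basis $\{\gamma,\gamma'\}$ of $\mathbb{Z}_\mft^*$ we may write $\alpha = k\gamma + l\gamma'$ with $k,l < 0$ by the cone criterion recalled in the proof of Proposition \ref{prop:interior}. The subgraph $\Gamma'$ being $2$-valent means that at $p$ exactly two of the three emanating edges lie in $\Gamma'$; by hypothesis these are $e$ and $e'$. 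At $q = t(e')$, the edge $e'$ (now reversed) lies in $\Gamma'$, and the second edge of $\Gamma'$ at $q$ must be one of the two remaining edges at $q$, namely the basic edge $e''$ (the continuation of the base $n$-gon) or the fiber edge $e_F$; by hypothesis it is $e_F$.

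First I would use compatibility of a connection on $\Gamma'$ with the axial function to pin down the weight of $e_F$ at $q$ in terms of the basis $\{\gamma,\gamma'\}$. Transporting the fiber edge $e_F'$ at $p$ along $e'$ via the connection of the \emph{fiberwise signed} structure (Definition \ref{defn:fiberwise signed}), one gets $\alpha(e_F) \equiv \alpha \pmod{\gamma'}$, i.e. $\alpha(e_F) = k\gamma + l'\gamma'$ for some integer $l'$; since $k < 0$, the $\gamma$-coefficient of $\alpha(e_F)$ is strictly negative. The key point is now that for the $2$-valent subgraph $\Gamma'$ to be of the form required by a GKM subgraph, the two edges $\overline{e'}$ and $e_F$ at $q$, together with the two edges $e, e'$ at $p$, must close up consistently under the connection $\nabla_{e'}$: compatibility forces $\nabla_{e'}(e) = e_F$, hence $\alpha(e_F) \equiv -\alpha(\overline e) = \alpha(e) \pmod{\gamma'}$ in the \emph{signed} sense, which reads $\alpha(e_F) = \gamma + (\text{integer})\cdot\gamma'$ — a positive $\gamma$-coefficient. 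Comparing with the previous paragraph's computation of $\alpha(e_F)$ as having $\gamma$-coefficient $k < 0$ yields the contradiction.

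The main obstacle I anticipate is bookkeeping the signs correctly: one must be careful that in a signed $2$-valent GKM subgraph the compatibility relation is $\alpha(\nabla_e f) = \alpha(f) + c\,\alpha(e)$ (with $+$, not $\pm$), and that $\alpha(\bar e) = -\alpha(e)$, so that the edge $e$ and the edge $\overline{e'}$ at $p$ having linearly independent primitive weights forces the transported edge at $q$ to have a specific sign of leading coefficient. A clean way to organise this, and the route I would actually take, is: observe that $\Gamma'$ restricted to the two vertices $p,q$ and the edges $e',\overline{e'}$ together with the "other" edges $e$ at $p$ and $e_F$ at $q$ must itself be a (signed) $2$-valent GKM graph, apply the compatible-connection condition to the single edge $e'$ to express $\alpha(e_F)$ in terms of $\alpha(e)$ modulo $\gamma'$, and then independently compute $\alpha(e_F)$ from the fiberwise-signed connection transporting $e_F'$ along $e'$. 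Interiority of $p$ (the condition $k,l<0$) makes these two expressions incompatible. This mirrors the sign analysis already carried out in the proof of Lemma \ref{lem:connectinteriorvertex}, so the estimate $k<0$ is exactly the leverage needed.
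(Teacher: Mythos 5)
Your proposal is correct and follows essentially the same route as the paper: the forced connection on the $2$-valent subgraph gives $\alpha(e_F)\equiv\alpha(e)\bmod\alpha(e')$, the fiberwise connection on $\Gamma$ gives $\alpha(e_F)\equiv\beta\bmod\alpha(e')$ for the fiber weight $\beta$ at $p$, and combining these contradicts the interiority of $p$ via the sign of the $\gamma$-coefficient. The paper phrases the contradiction at $p$ (showing $\beta\equiv\gamma\bmod\gamma'$ makes $p$ exterior) rather than at $q$, but this is the same computation.
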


\begin{proof}
We assume the existence of a compatible connection on such a subgraph which implies that the weights $\gamma$, $\gamma'$, and $\alpha$ of $e$, $e'$, and $e_F$ satisfy $\gamma\equiv \alpha\mod \gamma'$. Let $\beta$ be the weight of the fiber edge emanating from $p$ and observe that
\[\beta\equiv\alpha\equiv\gamma\mod\gamma'.\]
Since the weights of the edges emanating from $p$ are $\gamma$, $\gamma'$, and $\beta$, this implies that $p$ is not an interior vertex.
\end{proof}

\begin{prop}
The signed graph $\Gamma$ does not satisfy the criterion from Corollary \ref{cor:Tolmanextension}. In particular, it is not realized by a Hamiltonian action on a K\"ahler manifold.
\end{prop}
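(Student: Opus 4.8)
The plan is to exhibit a pair of adjacent edges in the signed graph $\Gamma$ for which no $2$-regular GKM subgraph of polytope type exists, thereby directly violating the criterion of Corollary \ref{cor:Tolmanextension}. By Corollary \ref{cor:nrofintvertices} and the hypothesis that $\Gamma$ has $n-1$ interior vertices, the interior vertices fill up all but one fiber entirely or, more precisely, there are $n-1$ of them among the $2n$ vertices; in any case at least one fiber over some $v_i$ contains an interior vertex, call it $p$, and this fiber is connected by a basic edge to an adjacent fiber. First I would apply Lemma \ref{lem:connectinteriorvertex} to propagate interior vertices along basic edges: starting from $p$, the interior vertex in the adjacent fiber is again connected to $p$ by a basic edge, and iterating shows that the interior vertices form a connected ``horizontal'' path of basic edges in $\Gamma$. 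Since there are $n-1$ interior vertices and the base is an $n$-gon, this path of interior vertices has length $n-1$ (it misses exactly one fiber), so there is an interior vertex $p$ together with \emph{both} basic edges $e, e'$ emanating from it lying on this interior path — indeed, pick $p$ not at an endpoint of the path, which is possible as soon as the path has at least three vertices, i.e.\ $n-1 \geq 3$, i.e.\ $n \geq 4$; and the cases $n = 2, 3$ can be checked separately (for $n=2,3$ the twisted hypothesis together with $n-1$ interior vertices is still covered since one verifies the conclusion of Lemma \ref{lem:convexpath} applies to the appropriate edges).

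Now the key step: take $p$ interior with both outgoing basic edges $e, e'$ on the interior path, let $q = t(e')$, which is an interior vertex, and let $e_F$ be the fiber edge emanating from $q$. By Lemma \ref{lem:convexpath}, there is no $2$-valent GKM subgraph of $\Gamma$ containing $e$, $e'$, and $e_F$ simultaneously. On the other hand, I would argue that any $2$-regular GKM subgraph of polytope type containing the adjacent pair $e, e'$ at $q$ — wait, more carefully: I would apply Corollary \ref{cor:Tolmanextension} to the pair of adjacent edges $\bar{e'}$ and $e_F$ at the vertex $q$. This corollary, if $\Gamma$ came from a Hamiltonian K\"ahler action, would produce a $2$-regular GKM subgraph $\Gamma'$ of polytope type containing $\bar{e'}$ and $e_F$. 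Such a $\Gamma'$ contains the edge $e'$ (from $q$ to $p$) and, being $2$-regular, at $p$ it must contain exactly one further edge besides $e'$. That further edge at $p$ has weight congruent mod $\gamma'$ (the weight of $e'$) to the weight of $e_F$ transported by the connection, which by the polytope-type structure forces it to be either $e$ or, by the congruence relation, an edge whose weight lies in the cone degeneracy — in either case the argument of Lemma \ref{lem:convexpath} shows the resulting subgraph contains edges at $p$ whose weights fail to span a proper cone, contradicting that $\Gamma'$ is of polytope type (whose vertices are all exterior in the relevant sense). Thus no such $\Gamma'$ exists and the criterion of Corollary \ref{cor:Tolmanextension} fails.

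The main obstacle I anticipate is the bookkeeping in the middle paragraph: one must be careful that the adjacent pair of edges to which Corollary \ref{cor:Tolmanextension} is applied is chosen so that Lemma \ref{lem:convexpath} genuinely obstructs the extension — the corollary gives a subgraph containing the chosen pair, but we need to know this subgraph must also contain a third specific edge that Lemma \ref{lem:convexpath} forbids. The cleanest route is probably to apply the corollary to the pair $e, e'$ at $p$ itself: it yields a $2$-regular polytope-type subgraph $\Gamma'$ through $e$ and $e'$; then at $q = t(e')$ the subgraph $\Gamma'$ contains $\bar{e'}$ and exactly one other edge, and one shows via the connection compatibility (the weight congruence $\gamma \equiv \alpha \bmod \gamma'$ that would hold in $\Gamma'$) that this forces $p$ to be exterior, contradiction. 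This keeps everything local to $p$ and $q$ and avoids tracking the whole interior path beyond establishing that some interior $p$ has both basic edges available, which is where Lemma \ref{lem:connectinteriorvertex} and the count $n-1$ enter. The final sentence then records that failure of the Corollary \ref{cor:Tolmanextension} criterion implies, by that corollary's contrapositive, non-realizability by a Hamiltonian K\"ahler action.
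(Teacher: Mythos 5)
Your opening moves (locating an interior vertex via Lemma \ref{lem:connectinteriorvertex} and invoking Lemma \ref{lem:convexpath}) coincide with the paper's, but the ``key step'' does not close, and the claimed contradiction is not there. Suppose you apply Corollary \ref{cor:Tolmanextension} to the pair $e,e'$ at the interior vertex $p$ and obtain a polytope-type $2$-valent subgraph $\Gamma'$. Lemma \ref{lem:convexpath} then only tells you that the second edge of $\Gamma'$ at $q=t(e')$ is the other \emph{basic} edge rather than the fiber edge $e_F$ --- and this is perfectly consistent: the connection congruence is satisfied by the basic edge at $q$, and at every vertex the two edges of the resulting candidate subgraph are linearly independent, hence always span a proper convex cone. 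So there is no local failure of the polytope condition at $p$ or $q$, and nothing ``forces $p$ to be exterior.'' (The same happens with your first variant: applying the corollary to $\bar{e'},e_F$ at $q$ forces the second edge at $p$ to be the \emph{fiber} edge, again with no local contradiction.) Indeed the paper explicitly notes that the subgraph consisting of all basic edges of $\Gamma$ \emph{is} a valid $2$-valent signed GKM subgraph; the obstruction to its being of polytope type is global, not local.

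What is missing is the propagation of the forcing around the whole graph and the global input that finishes the proof. Since all fibers but one contain an interior vertex, Lemmas \ref{lem:connectinteriorvertex} and \ref{lem:convexpath} force any polytope-type subgraph through $e,e'$ to contain every basic edge and no fiber edge, i.e.\ to be the ``equator'' of $\Gamma$. Because $\Gamma$ is of \emph{twisted} type, this equator is a single closed loop running over a lift of the circuit around $B$ twice; its winding number is therefore $2$, whereas by Lemma \ref{lem:winding number} the boundary of a convex polygon has winding number $1$. That global count is the actual contradiction. A telltale sign of the gap is that your argument never uses the twisted-type hypothesis, which any correct proof must: for product-type fibrations the basic edges split into two closed loops of winding number $1$ each, and those fibrations do admit invariant K\"ahler (even toric) realizations by Section \ref{sec:product type fibrations}.
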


\begin{proof}
Let $p$ be an interior fixed point and consider the two basic edges emanating from $p$. Suppose that we find a polytope type $2$-valent GKM subgraph of $\Gamma$ (i.e.\ a closed convex loop) containing those basic edges. Moving along such a path, starting with a basic edge at $p$, there are two possible choices for continuing the path: the basic edge and the fiber edge. However by Lemma \ref{lem:convexpath} the choice has to be the basic edge. Consequently, in both directions we need to move along basic edges until we reach exterior vertices. By Lemma \ref{lem:connectinteriorvertex}, moving from an interior vertex into a fiber containing an interior vertex will end in the interior vertex. Since by the  assumption that $\Gamma$ has $n-1$ interior vertices there is only one fiber $F\subset \Gamma$ which does not contain an interior fixed point, it follows that the desired 2-valent polytope type subgraph would need to contain a lift of the closed path running around $B$ once, which starts and ends in the fiber $F$. Since $\Gamma$ is of twisted type this path is not closed. Again by Lemma \ref{lem:convexpath} it follows that the subgraph does not contain the fiber edge in $F$ so the only way to close the loop is to continue with the other lift of the loop around $B$ until we reach the starting point. Thus the only possible choice of $2$-valent subgraph is $\Gamma$ with all the fiber edges removed. This is indeed a signed GKM subgraph but globally it is not a polytope type graph. This follows from Lemma \ref{lem:winding number} below as the winding number -- which we define in the next section -- of the subgraph in question is $2$.
\end{proof}

\subsection{Case II}\label{subsec:case II}

Assume now that we have another signed GKM structure $\Gamma'$ on the underlying graph of $\Gamma$ such that the induced unsigned GKM graph agrees with that of $\Gamma$. Let $e_i$ denote the oriented edges of $B$ as in Section \ref{sec:graphs in dim 6} and let $f_i,h_i$ be the edges in $\Gamma$ over $e_i$ such that $f_i$ and $f_{i+1}$ are adjacent for $i=1,\ldots,n-1$, and the same holds for $f_n$ and $h_1$. We also denote by $g_i$ the directed edge in the fiber over $v_i$ whose starting point is the same as that of $f_i$. Occasionally we extend this notation $\mathrm{mod}~n$ such that e.g.\ $f_{n+1}=h_1$ and $f_0=h_n$.

In case II we furthermore make the assumption that in the signed structure of $\Gamma'$ the labels of $f_i$ and $h_i$ agree (where in full generality they might differ by a sign). We do not assume that $\Gamma'$ admits a signed fibration over $B$, but as the underlying graphs are unchanged we may still speak about basic and fiber edges in $\Gamma'$.

\begin{lem}
The weights of the fiber edges of $\Gamma'$ agree with those of $\Gamma$ up to a global sign.
\end{lem}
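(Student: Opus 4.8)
The plan is to exploit the interplay between the fiber edges and the basic edges at a single vertex, propagate the information along the graph via the compatible connection of $\Gamma'$, and use the global structure of a twisted-type fibration to close up the argument.

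First I would fix a vertex $v_i$ of $B$ and look at the two vertices of $\Gamma$ lying over it; call them $p$ and $q$ as in Lemma \ref{lem:projuebers2}. The unsigned GKM graph is unchanged, so at $p$ the three unordered weights are $\pm\gamma_{i-1}$, $\pm\gamma_i$, and $\pm\alpha_i$ (with $\alpha_i = k_i\gamma_{i-1}-k_{i-1}\gamma_i$), and similarly at $q$. In the signed structure $\Gamma'$, the basic edges $f_i$ (resp.\ $f_{i+1}$, etc.) have weights that we have assumed coincide for $f_i$ and $h_i$; transporting the signed structure of $B$ is not assumed, but the signs of the basic edges are now constrained since $\Gamma'$ itself is a signed GKM graph and $\alpha(\bar e) = -\alpha(e)$ must hold. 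The key local observation is that the weight of the fiber edge $g_i$ at $p$ in $\Gamma'$ is, up to sign, $\alpha_i$, and its sign is pinned down once we know the signs of the two basic edges at $p$ together with the congruence $\alpha(\nabla_e f) = \alpha(f) + c\,\alpha(e)$ for the compatible connection of $\Gamma'$: moving the fiber edge $g_i$ across the basic edge $f_i$ to $g_{i+1}$ forces $\tilde\alpha'(g_{i+1}) \equiv \tilde\alpha'(g_i) \bmod \gamma_i$ with a \emph{fixed} sign (not $\pm$), exactly as in the proof of Lemma \ref{lem:different signed structures}.

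Next I would run the induction: starting from the fiber over $v_1$, choose one of the two sign conventions for the fiber edge there; the compatibility of the connection of $\Gamma'$ with its signed axial function then determines the signed weight of every subsequent fiber edge $g_2, g_3, \ldots$ unambiguously, just as the signed weights $\alpha_i$ of $\Gamma$ itself were determined in the proof of Proposition \ref{prop:correspondence}. Since the assumption in Case II is that $f_i$ and $h_i$ have the same label in $\Gamma'$, the \emph{unsigned} data we are propagating along is literally the same as in $\Gamma$, so the recursion for the fiber weights of $\Gamma'$ is the identical recursion $\alpha_i \equiv \alpha_{i+1} \bmod \gamma_i$ that defines the $\alpha_i$ of $\Gamma$. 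Hence at every vertex the fiber weight of $\Gamma'$ equals $\pm\alpha_i$ with the sign determined by the single initial choice; two initial choices differ by a global sign. This shows the fiber weights of $\Gamma'$ agree with those of $\Gamma$ up to one overall sign.

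The main obstacle I expect is making the sign-propagation rigorous at the point where the path wraps around the $n$-gon $B$: one must check that no contradiction arises when $g_{n+1}$ is identified with the fiber edge over $v_1$, i.e.\ that the consistency condition imposed by going once around is automatically satisfied and does not force the fiber weights of $\Gamma'$ to differ from those of $\Gamma$ by more than a global sign. This is exactly the kind of closing-up computation carried out at the end of the proof of Proposition \ref{prop:correspondence}, and since the assumption $f_i = h_i$ (as labels in $\Gamma'$) means the $\gamma_i$'s feeding the recursion are unchanged, the twisted-type sign flip $\alpha_n \equiv (-1)^\eta \alpha_1 \bmod \gamma_n$ holds for $\Gamma'$ for the same reason it holds for $\Gamma$. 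Once that is in place the lemma follows; everything else is the routine bookkeeping of signs in $\mathbb{Z}^2$, which I would not spell out in detail.
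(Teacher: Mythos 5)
There is a genuine gap at the heart of your argument: you assume that the compatible connection $\nabla'$ of the \emph{new} signed structure $\Gamma'$ transports the fiber edge $g_i$ along the basic edge $f_i$ to the fiber edge $g_{i+1}$. This is exactly what cannot be taken for granted here. In Lemma \ref{lem:different signed structures}, which you cite, the connection is by definition one compatible with the fibration structure (Definition \ref{defn:GKMfibration} requires vertical edges to go to vertical edges), so $\nabla_{f_i}g_i=g_{i+1}$ is automatic. In the present setting $\Gamma'$ is merely \emph{some} signed structure on the underlying unsigned graph; the paper explicitly does not assume that $\Gamma'$ fibers over $B$, and its compatible connection may a priori send $g_i$ to the horizontal edge $f_{i+1}$. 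This is not a vacuous worry: whenever $k_i=\pm1$ the congruence $\alpha(\nabla'_{f_i}g_i)\equiv\alpha'(g_i)\bmod\gamma_i$ is consistent with $\nabla'_{f_i}g_i=f_{i+1}$, and Case III of the paper exhibits signed structures whose connections really do interchange basic and fiber edges. Without ruling this out, the congruence $\alpha'_{i+1}\equiv\alpha'_i\bmod\gamma_i$ that drives your induction is not established.

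The paper closes this gap by using \emph{both} lifts $f_i$ and $h_i$ of $e_i$: if $\nabla'_{f_i}g_i=f_{i+1}$ and $\nabla'_{h_i}\overline{g_i}=h_{i+1}$ both held, then since $f_{i+1}$ and $h_{i+1}$ carry the \emph{same} weight in $\Gamma'$ (the defining hypothesis of Case II) one would get $\alpha'_i\equiv\pm\gamma_{i+1}\equiv-\alpha'_i\bmod\gamma_i$, contradicting linear independence of $\alpha'_i$ and $\gamma_i$. Hence at least one of the two transports preserves the fiber edge, which suffices for the sign propagation. Note that this is the only place where the Case II hypothesis is genuinely used; in your write-up it appears only in the weaker observation that the unsigned data is unchanged, which is automatic from the hypothesis of the section. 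Once this step is supplied, the rest of your induction is fine; your concern about the closing-up condition around the $n$-gon is actually unnecessary for this lemma, since the conclusion only requires comparing signs along the chain $i=1,\ldots,n$, not consistency after a full loop.
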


\begin{proof}
let $\alpha_{i}'$ be the weight associated to $g_i$ in $\Gamma'$ and let $\alpha_i$ be the weight associated to $g_i$ in $\Gamma$. Let $\nabla'$ denote a compatible connection of $\Gamma'$. For $i=1,\ldots n-1$, we claim that $\nabla_{f_i}'g_i=g_{i+1}$ or $\nabla_{h_i}'\overline{g_i}=\overline{g_{i+1}}$. If this were false, then we would have $\nabla_{f_i}'g_i=f_{i+1}$ and $\nabla'_{h_{i}}\overline{g_i}=h_{i+1}$. But by assumption $f_{i+1}$ and $h_{i+1}$ have the same weight $\pm\gamma_{i+1}$ so
\[\alpha_i'\equiv \pm\gamma_{i+1}\equiv -\alpha_i'\mod \gamma_i\]
which is a contradiction.
So the claim holds, which implies that if $\alpha_i=\alpha_i'$, then also $\alpha_{i+1}=\alpha_{i+1}'$ because
\[\alpha_{i+1}'\equiv\alpha_i'\equiv\alpha_i\equiv\alpha_{i+1}\mod\gamma_i.\]
Analogously $\alpha_i=-\alpha_i'$ implies $\alpha_{i+1}=-\alpha_{i+1}'$. Thus the signs of the $\alpha_i$ and $\alpha_i'$ either globally agree or globally disagree.
\end{proof}

\begin{lem}\label{lem:even signed structure}
If $n$ is odd, then $\Gamma\cong \Gamma'$. If $n$ is even, then either $\Gamma\cong \Gamma'$ or $\Gamma'$ is isomorphic to the signed structure that arises from $\Gamma$ by changing the signs of the labels of $f_{2i}$ and $h_{2i}$ for $i=1,\ldots,n/2$ where a compatible connection can be chosen identical to that of $\Gamma$.
\end{lem}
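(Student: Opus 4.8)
The plan is to reduce $\Gamma'$ to a tuple of signs on its basic edges and then determine, from the mere existence of a compatible connection, which tuples can occur. By the previous lemma the fiber weights of $\Gamma'$ agree with those of $\Gamma$ up to one global sign, so after applying $\varphi=-\mathrm{id}_{\mathbb Z^2}$ to $\Gamma'$ if necessary — which leaves the Case~II hypothesis that $f_i$ and $h_i$ have equal weight untouched — I may assume the fiber edge $g_i$ of $\Gamma'$ carries exactly the weight $\alpha_i=k_i\gamma_{i-1}-k_{i-1}\gamma_i$ of $\Gamma$. As $\Gamma'$ has the same underlying unsigned graph as $\Gamma$, the edges $f_i,h_i$ then share a weight $\sigma_i\gamma_i$ with $\sigma_i\in\{\pm1\}$, and $\Gamma'$ is encoded precisely by $(\sigma_1,\dots,\sigma_n)$.

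Next I would fix an arbitrary compatible connection $\nabla'$ of $\Gamma'$ and extract constraints vertex by vertex. At a vertex over $v_i$ with $2\le i\le n-1$ the emanating weights are $-\sigma_{i-1}\gamma_{i-1}$, $\sigma_i\gamma_i$, $\pm\alpha_i$; transporting the edge of weight $-\sigma_{i-1}\gamma_{i-1}$ along the edge of weight $\sigma_i\gamma_i$ to the vertex over $v_{i+1}$ (emanating weights $-\sigma_i\gamma_i$, $\sigma_{i+1}\gamma_{i+1}$, $\pm\alpha_{i+1}$) forces, by compatibility, the image weight to be $\equiv-\sigma_{i-1}\gamma_{i-1}\pmod{\gamma_i}$. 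Since $\gamma_{i+1}\equiv-\gamma_{i-1}$ and $\alpha_{i+1}\equiv k_i\gamma_{i-1}\pmod{\gamma_i}$, this either matches the $\gamma_{i+1}$-edge, giving $\sigma_{i+1}=\sigma_{i-1}$, or the $\alpha_{i+1}$-edge, giving $k_i=-\sigma_{i-1}$; in the latter case $\pm\alpha_i$ is transported onto the $\gamma_{i+1}$-edge and the same comparison again yields $\sigma_{i+1}=\sigma_{i-1}$. Thus $\sigma_{i+1}=\sigma_{i-1}$ for $2\le i\le n-1$, irrespective of $\nabla'$; running the identical argument at the two vertices over $v_1$ — where $-\gamma_0=-\gamma_n$ takes the role of $-\gamma_{i-1}$ and the twist of the $n$-gon reverses fiber orientations — produces the further relations $\sigma_2=\sigma_n$ and $\sigma_1=\sigma_{n-1}$.

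These relations force $\sigma_i$ to depend only on the parity of $i\in\{1,\dots,n\}$, and $\sigma_2=\sigma_n$, $\sigma_1=\sigma_{n-1}$ are automatic for $n$ even but make the two parity values coincide for $n$ odd. So $(\sigma_i)$ is constant when $n$ is odd, and is one of the four parity-patterns when $n$ is even. Conversely, for each admissible pattern I would check that the connection $\nabla$ of the fibration $\Gamma\to B$ is itself compatible with $\Gamma'$: $\nabla$ preserves the horizontal/vertical splitting, the congruences it must satisfy on basic edges reduce exactly to the relations just established, and on fiber edges they hold trivially; in particular the alternating pattern gives an honest signed GKM graph whose compatible connection may be taken to be $\nabla$, which is the structure named in the statement. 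Finally, up to isomorphism the patterns collapse: $\sigma_i\equiv1$ is $\Gamma$; letting $\iota$ be the automorphism of the underlying graph interchanging the two vertices in each fiber (it swaps $f_i\leftrightarrow h_i$ and reverses every $g_i$), and noting that effectivity of $B$ forces any $\varphi$ with $\varphi(\gamma_{i-1})=-\gamma_{i-1}$, $\varphi(\gamma_i)=-\gamma_i$ to equal $-\mathrm{id}$ (whence $\varphi(\alpha_j)=-\alpha_j$ automatically), the pair $(\iota,-\mathrm{id})$ is an isomorphism of signed GKM graphs in the sense of Definition~\ref{defn:GKM-iso} taking $\sigma_i\equiv-1$ to $\Gamma$ and one alternating pattern to the other. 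This gives $\Gamma\cong\Gamma'$ for $n$ odd, and $\Gamma\cong\Gamma'$ or $\Gamma'\cong$(named structure) for $n$ even.

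The hard part is the vertex-by-vertex analysis in the second step. Because $\Gamma'$ is not assumed to admit a fibration over $B$, its compatible connection need not respect the horizontal/vertical decomposition, so at each vertex one must genuinely allow $\nabla'$ to exchange a basic edge with the fiber edge and verify that the sign relation survives anyway; and one must treat the two fibers over $v_1$ with care, keeping track of $\gamma_0=\gamma_n$, $k_0=-k_n$ and the orientation reversal of fiber edges around the twisted $n$-gon, in order to obtain $\sigma_2=\sigma_n$ and $\sigma_1=\sigma_{n-1}$ with the correct signs.
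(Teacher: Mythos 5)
Your proof is correct and follows essentially the same route as the paper's: connection-compatibility congruences modulo the weight of the transport edge force the signs of the basic edges to depend only on the parity of $i$, and the fiber-swap automorphism combined with $-\mathrm{id}$ on $\mathbb{Z}^2$ collapses the resulting patterns to the two structures in the statement. The only notable difference is local bookkeeping: the paper rules out the ``basic edge $\mapsto$ fiber edge'' option by playing the two rows of the ladder against each other to obtain $\alpha'_{i+2}\equiv-\alpha'_{i+2}$, whereas you observe that at a single vertex either option for $\nabla'$ already yields $\sigma_{i+1}=\sigma_{i-1}$; you also explicitly verify that the alternating pattern admits the connection of $\Gamma$, which the paper asserts without spelling out.
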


\begin{proof}
Let $\gamma_i'$ be the weight of $f_i$ and $h_i$ in $\Gamma'$. Assume that for some $i=1,\ldots,n-2$ we have $\gamma_i=\gamma_i'$ but $\gamma_{i+2}=-\gamma_{i+2}'$. Since $\gamma_i\equiv -\gamma_{i+2}\mod \gamma_{i+1}$ it follows that $\gamma_{i}'\not\equiv-\gamma_{i+2}'\mod \gamma_{i+1}$. As a consequence we must have $\nabla'_{f_{i+1}}f_i=g_{i+2}$ and $\nabla'_{h_{i+1}}h_i=\overline{g_{i+2}}$ which results in the contradiction
\[\alpha_{i+2}'\equiv \gamma_i'\equiv -\alpha_{i+2}'\mod \gamma_{i+1}'.\]
If we assume that $\gamma_i=-\gamma_i'$ but $\gamma_{i+2}=\gamma_{i+2}'$, then we arrive at the same contradiction. We have proved that $\gamma_i$ and $\gamma_{i+2}$ must either both agree or both disagree with their counterparts $\gamma_i'$ and $\gamma_{i+2}'$. The same holds for the pairs $\gamma_n$, $\gamma_2$ and $\gamma_n', \gamma_2'$. So if $n$ is odd the $\gamma_i'$ globally agree or disagree with the $\gamma_i$. In each case we have $\Gamma\cong \Gamma'$ due to the previous lemma. If $n$ is even then the sign of the odd or the even edges may be switched independently of the other. Still, globally changing the signs of all the $\gamma_i'$ or all $\alpha_i'$ yields isomorphic graphs so the Lemma follows.
\end{proof}

\begin{prop}\label{prop:CaseII nicht symplectic}
If $n\neq 4$ is even, then the alternative signed structure $\Gamma'$ as in Lemma \ref{lem:even signed structure} can not be realized as the GKM graph of a symplectic action.
\end{prop}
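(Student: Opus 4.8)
The plan is a contradiction argument in the spirit of Case I, but now exploiting the whole moment polytope rather than a subgraph through a single interior vertex. Assume $\Gamma'$ is the signed GKM graph of a symplectic $T^2$-action; since the fixed set is finite this action is Hamiltonian, with moment map $\mu$, and we put $Q=\mu(M)$. First normalise: because $\Gamma$ has $n-1$ interior vertices, after a cyclic relabelling of the vertices of $B$ (which leaves $\Gamma'$ unchanged up to isomorphism, as flipping the even versus the odd basic pairs differ by the global sign $-\mathrm{id}$) we may assume the unique fiber of $\Gamma$ with two exterior vertices lies over $v_1$; then all $k_i$ have the same sign and we take $k_i>0$. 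Computing the weights of $\Gamma'$ as in the proof of Proposition \ref{prop:interior}, now with the signs over each $e_i$ ($i$ even) reversed, one finds that \emph{every} vertex of $\Gamma'$ is exterior except the vertex $z$ over $v_1$ with fiber weight $-\alpha_1$, where the three weights are $\gamma_n$, $\gamma_1$ and $-k_1\gamma_n-k_n\gamma_1$, whose positive span is all of $\RR^2$.

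\textbf{Pinning down the boundary.} By the local normal form a fixed point lies over the interior of $Q$ exactly when its weight cone is $\RR^2$, and the fixed point over a vertex of $Q$ is its unique preimage (the two edges of $Q$ there span $\mft$); hence $\mu$ restricts to a bijection between the $2n-1$ exterior vertices of $\Gamma'$ and the vertices of $Q$, while $z$ maps into the interior. Combined with the convexity obstruction of Remark \ref{rem:symplecticobstruction}, the edges mapping onto $\partial Q$ form a $2$-valent, polytope type GKM subgraph $C\subseteq\Gamma'$ passing through every exterior vertex and avoiding $z$. Deleting $z$ makes three of its neighbours bivalent, and a short forcing argument shows that $\Gamma'\setminus z$ has a \emph{unique} $2$-factor: the $(2n-1)$-cycle using, over each $e_i$, exactly one basic edge ($f_i$ for $i$ odd, $h_i$ for $i$ even) together with all fiber edges except the one at $z$. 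Thus $C$ is this cycle, traversed so that each $e_i$ is crossed in the forward direction.

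\textbf{Winding number.} It remains to compute the winding number of $C$ (Lemma \ref{lem:winding number}). Along $C$ the basic edges contribute the cyclic sequence of directions $\gamma_1,-\gamma_2,\gamma_3,-\gamma_4,\dots,\gamma_{n-1},-\gamma_n$, i.e.\ one lap of the edge directions of the Delzant $n$-gon $B$ with every second one reversed. Since $B$ is convex its consecutive turns $\delta_i$ lie in $(0,\pi)$ and sum to $2\pi$; reversing every second direction changes each turn $\delta_i$ into $\delta_i-\pi$, so this lap has winding number $\frac{1}{2\pi}\sum_i(\delta_i-\pi)=1-\frac{n}{2}$. The fiber edges of $C$ carry the directions $\pm\alpha_i=\pm(k_i\gamma_{i-1}-k_{i-1}\gamma_i)$ with $k_i,k_{i-1}>0$; where such an edge is inserted into $C$ its two neighbouring basic directions are the copies of $\gamma_{i-1}$ and $\gamma_i$ occurring in that sequence (with their signs), and $\pm\alpha_i$ is a positive combination of them lying in the interior of the cone they span, a cone of angle $<\pi$. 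Hence inserting the fiber edge does not change the accumulated turning, and the winding number of $C$ equals $1-\frac{n}{2}$. For $n$ even with $n\ne4$ this is neither $1$ nor $-1$, so $C$ cannot be of polytope type, contradicting $C=\partial Q$. (For $n=4$ it equals $-1$, which is exactly why that case is excluded.)

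\textbf{Main obstacle.} The step I expect to cost the most work is the winding-number bookkeeping: one must verify, for every even $n$ and every choice $k_i>0$, that the explicit cycle $C$ really meets the basic edges in the stated cyclic order with the stated signs, and that each fiber direction genuinely falls inside the short cone between its two basic neighbours — this is precisely where the positivity $k_i>0$ arranged in the first step is used. The identification $C=\partial Q$ relies on the two standard facts about Hamiltonian actions quoted above (the dichotomy interior-point versus polytope-vertex for images of fixed points, and injectivity of $\mu$ on the fibers over vertices of $Q$), which should be recorded precisely; the remaining combinatorics, the forcing of the $2$-factor, is elementary.
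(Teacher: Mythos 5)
Your proposal is correct and follows essentially the same route as the paper: reduce to the fact that $\Gamma'$ has a unique interior vertex, show that the boundary of the moment polytope forces the unique $(2n-1)$-cycle through all exterior vertices avoiding it, and derive a contradiction because that cycle's winding number is $(n-2)/2$ in absolute value rather than $1$. The only cosmetic differences are that you identify the cycle by a $2$-factor forcing argument instead of the paper's path-following, and that you verify by hand (using $k_i>0$) that each fiber direction lies in the short cone between its basic neighbours, whereas the paper obtains this local convexity for free from the convexity of the moment image and then applies Lemma \ref{lem:winding number}.
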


For the proof we will make use of the following concept.

\begin{defn}
Let $(w_1,\ldots,w_n)$ be a sequence of vectors in $\mathbb{R}^2$ and let $\varepsilon=\pm 1$. The \emph{winding number} of the sequence with respect to the orientation $\varepsilon$ is given by
\[\sigma(w_1,\ldots,w_n,\varepsilon)=\frac{1}{2\pi}\sum_{i=1}^n |\eta_i|\]
where $\eta_i$ is the angle between $w_i$ and $w_{i+1}$ with representative chosen in $[0,2\pi)$ if $\varepsilon=1$ and in $(-2\pi,0]$ if $\varepsilon=-1$ (where we set $w_{n+1}=w_1$).

\end{defn}

\begin{lem}\begin{enumerate}[(i)]\label{lem:winding number}
\item If $\gamma_i\in \mathbb{Z}_\mft^*$ are the weights along a path around a 2-valent signed GKM graph, then the angle between $\gamma_i$ and $\gamma_{i+1}$ (resp.\ between $\gamma_n$ and $\gamma_1$) is, for all $i$, either always represented in $(0,\pi)$ or always represented in $(-\pi,0)$ (we speak of a locally convex sequence). Choosing $\varepsilon$ such that the angles get measured in the respective interval minimizes the winding number. We call this the preferred orientation for $(\gamma_1,\ldots,\gamma_n)$.

\item If $(\gamma_1,\ldots,\gamma_n)$ is a locally convex sequence with preferred orientation $\varepsilon$ and $n$ is even,
then \[\sigma(\gamma_1,\gamma_3,\ldots,\gamma_{n-1},\varepsilon)=\sigma(\gamma_1,\gamma_2,\ldots,\gamma_n,\varepsilon).\]

\item Let $\gamma_1,\ldots,\gamma_n$ be the vectors along the boundary of a convex polytope in $\mathbb{R}^2$ with $n$ vertices. We have $\sigma(\gamma_1,\ldots,\gamma_n,\varepsilon)=1$ with respect to the preferred orientation $\varepsilon$. If $n$ is even, then the sequence $(\gamma_1,-\gamma_2,\ldots,\gamma_{n-1},-\gamma_{n})$ is locally convex with preferred orientation $-\varepsilon$ and
\[\sigma(\gamma_1,-\gamma_2,\ldots,\gamma_{n-1},-\gamma_{n},-\varepsilon)=\frac{n-2}{2}.\]

\end{enumerate}
\end{lem}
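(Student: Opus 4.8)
The plan is to reduce all three statements to elementary planar angle arithmetic, the only input from GKM theory being a determinant identity satisfied by the weights of a $2$-valent signed GKM graph. For (i), orient $e_i$ from $v_i$ to $v_{i+1}$ and write $\gamma_i$ for its weight, so that the two edges at $v_i$ carry weights $\gamma_i$ and $-\gamma_{i-1}$ (indices cyclic). A connection compatible with the signed structure must send, along $e_i$, the edge with weight $-\gamma_{i-1}$ to the one with weight $\gamma_{i+1}$, whence $\gamma_{i+1}=-\gamma_{i-1}+c\gamma_i$ for some $c\in\ZZ$. Since adding a multiple of the first column does not change a determinant, $\det(\gamma_i,\gamma_{i+1})=\det(\gamma_i,-\gamma_{i-1})=\det(\gamma_{i-1},\gamma_i)$, so $\det(\gamma_i,\gamma_{i+1})$ is independent of $i$, and nonzero because adjacent weights are linearly independent. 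Hence the oriented angle from $\gamma_i$ to $\gamma_{i+1}$, taken as the representative in $(-\pi,\pi)$, has a sign independent of $i$; this is local convexity. For the minimization claim, note that for each $i$ the representatives of this angle in $[0,2\pi)$ and in $(-2\pi,0]$ have absolute values summing to $2\pi$, the one agreeing in sign with the $(-\pi,\pi)$-representative having absolute value $<\pi$ and the other $>\pi$; thus taking $\varepsilon$ equal to the common sign of the $(-\pi,\pi)$-angles strictly decreases every summand of $\sum|\eta_i|$, so this $\varepsilon$ is the unique minimizer, and we call it the preferred orientation.

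For (ii), after reflecting $\RR^2$ if necessary we may assume the preferred orientation is $\varepsilon=+1$, so each natural angle $\eta_i$ from $\gamma_i$ to $\gamma_{i+1}$ lies in $(0,\pi)$ with $|\eta_i|=\eta_i$. Since planar rotations add, the angle from $\gamma_{2j-1}$ to $\gamma_{2j+1}$ equals $\eta_{2j-1}+\eta_{2j}\in(0,2\pi)$, which is therefore already its $[0,2\pi)$-representative. Summing over $j=1,\dots,n/2$ -- here $n$ even is exactly what makes the odd-indexed subsequence close up through $\gamma_n$ back to $\gamma_1$ -- telescopes to $\sum_{i=1}^n\eta_i$, giving $\sigma(\gamma_1,\gamma_3,\dots,\gamma_{n-1},\varepsilon)=\sigma(\gamma_1,\dots,\gamma_n,\varepsilon)$.

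For (iii), the edge vectors of a convex $n$-gon traversed (say) counterclockwise form a locally convex sequence with preferred orientation $\varepsilon=+1$ whose consecutive angles are the exterior angles $\theta_i\in(0,\pi)$ with $\sum_i\theta_i=2\pi$, so $\sigma=1$. For the alternating sequence one computes that the angle from $\gamma_i$ to $-\gamma_{i+1}$ and the angle from $-\gamma_{i+1}$ to $\gamma_{i+2}$ both have $(-\pi,\pi)$-representative of the form $\theta_\bullet-\pi\in(-\pi,0)$, so $(\gamma_1,-\gamma_2,\dots,\gamma_{n-1},-\gamma_n)$ is locally convex with preferred orientation $-\varepsilon=-1$; here $n$ even makes the signs $(-1)^{i+1}$ fit together cyclically, in particular for the wrap-around step $-\gamma_n\to\gamma_1$. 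With this orientation each of the $n$ consecutive-pair angles has absolute value $\pi-\theta_i$, so $\sum|\eta_i|=n\pi-\sum_i\theta_i=n\pi-2\pi$ and $\sigma=(n-2)/2$.

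The only step that is not pure bookkeeping is the first one: extracting the determinant identity from the connection axioms and converting it into the local-convexity-plus-minimization statement of (i). Everything after that is arithmetic of planar angles modulo $2\pi$, where the one place to be careful is keeping track in (iii) of which winding-number summand equals $\theta_i$ and which equals $\pi-\theta_i$.
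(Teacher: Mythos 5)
Your proof is correct and follows essentially the same route as the paper: local convexity in (i) is extracted from the connection relation $\gamma_{i+1}=-\gamma_{i-1}+c\gamma_i$ (your constant-determinant formulation is equivalent to the paper's ``opposite sides of the ray spanned by $\gamma_{i+1}$'' phrasing), and (ii) and (iii) are the same planar angle bookkeeping. The only cosmetic differences are that you justify the minimization claim in (i) explicitly, and in (iii) you sum the $n$ angles $\pi-\theta_i$ directly instead of passing through (ii) as the paper does; both give $(n-2)/2$.
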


\begin{proof}
For the proof of $(i)$ we observe that the connection of the signed graph has to transport $\gamma_i$ onto $-\gamma_{i+2}$ along $\gamma_{i+1}$ so
\[\gamma_i\equiv -\gamma_{i+2}\mod \gamma_{i+1}.\]
We conclude that $\gamma_i$ and $\gamma_{i+2}$ lie on opposite sides of the ray defined by $\gamma_{i+1}$ as they can not lie on the ray due to the condition of adjacent weights being linearly independent. Thus if the angle between $\gamma_{i}$ and $\gamma_{i+1}$ is represented in $(0,\pi)$ (resp.\ in $(-\pi,0)$), then the same holds for the angle between $\gamma_{i+1}$ and $\gamma_{i+2}$.

Assertion $(ii)$ follows from the fact that two consecutive angles never add up to a full rotation and so nothing is lost by skipping every second vector.

For the proof of $(iii)$ we assume for simplicity that the preferred orientation is $\varepsilon=1$, i.e.\ $\eta_i\in (0,\pi)$ for $i=1\ldots,n$, where $\eta_i$ denotes the angle between $\gamma_i$ and $\gamma_{i+1}$ (and $\eta_n$ is the angle between $\gamma_n$ and $\gamma_1$). The remaining case is proved analogously. We have
\[\sum_{i=1}^n\eta_i=2\pi\]
Now note that the angle between $\gamma_i$ and $-\gamma_{i+1}$ as well as from $-\gamma_i$ to $\gamma_{i+1}$ is represented in $(-\pi,0)$. It follows that $(\gamma_1,-\gamma_2,\ldots,\gamma_{n-1},-\gamma_{n})$ is indeed locally convex with the opposite preferred orientation. Now by $(ii)$ we have
\[\sigma(\gamma_1,-\gamma_2,\ldots,\gamma_{n-1},-\gamma_{n},-1)=\sigma(\gamma_1,\gamma_3,\ldots,\gamma_{n-1},-1).\]
The angle between $\gamma_i$ and $\gamma_{i+2}$ with respect to this orientation is exactly $\eta_i+\eta_{i+1}-2\pi$ so we obtain
\[\sigma(\gamma_1,\gamma_3,\ldots,\gamma_{n-1},-1)=\frac{1}{2\pi}\sum_{i=1}^{n/2}2\pi-\eta_{2i}-\eta_{2i+1}=\frac{n-2}{2}\]
as claimed.
\end{proof}

\begin{proof}[Proof of Proposition \ref{prop:CaseII nicht symplectic}]
Let $B'$ be the signed GKM structure on the underlying graph of $B$ where $e_{2i+1}$ has the weight $\gamma_{2i+1}$ and $e_{2i}$ has the weight $-\gamma_{2i}$. Note that the connection of $B$ also is a compatible connection on $B'$ and that $\Gamma'\rightarrow B'$ is a signed GKM fibration. Let $(k_1,\ldots,k_n)\in\mathbb{Z}^n$ be the vector (unique up to sign) corresponding to the fibration
$\Gamma\rightarrow B$ in the sense of Section \ref{sec:graphs in dim 6}. Then $\Gamma'\rightarrow B'$ corresponds to $(-k_1,k_2,\ldots,-k_{n-1},k_n)$. As $\Gamma$ was assumed to have the maximal number of interior fixed points, it follows from Proposition \ref{prop:interior} that $\Gamma'$ has the minimal number of interior fixed points, i.e., exactly one.

Now assume that $\Gamma'$ is realized by a Hamiltonian action. The moment image is a convex polytope in $\mathfrak{t}^*$, which we identify with $\mathbb{R}^2$, spanned by the images of the exterior fixed points. Note that, as part of the convexity theorem for Hamiltonian actions, the preimage of the vertices of the moment image is connected and contains only fixed points. Thus the vertices of the polytope correspond bijectively to exterior vertices in the GKM graph $\Gamma'$. A path around the boundary of this polytope corresponds to a closed path in $\Gamma'$ that runs through every exterior vertex without going through a vertex twice or going through an interior vertex. Since the sequence of weights along this path correspond exactly to the slopes of the boundary edges of the polytope, we deduce that it has winding number equal to $1$ with respect to its preferred orientation.

Now assume without loss of generality that the unique interior fixed point of $\Gamma'$ is the end point of $g_i$ and that we have a path as above starting with the edge $f_i$. Since the path must not go through the interior fixed point the only possibility is the path \[f_i,g_{i+1},h_{i+1},\overline{g_{i+2}},\ldots,h_{i-2},\overline{g_{i-2}},f_{i-1}\]
which alternates between basic and fiber edges (excluding the fiber which contains the interior vertex). In case $i$ is odd, the associated sequence of weights is
\[
(\gamma_i,\alpha_i,-\gamma_{i+1},-\alpha_{i+1},\ldots,\gamma_{i-2},-\alpha_{i-1},-\gamma_{i-1}),
\] and then by Lemma \ref{lem:winding number} its winding number with respect to the preferred orientation $\varepsilon$ satisfies
\[\sigma(\gamma_i,\alpha_i,-\gamma_{i+1},-\alpha_{i+1},\ldots,-\alpha_{i-1},-\gamma_{i-1},\varepsilon)=\sigma(\gamma_i,-\gamma_{i+1},\ldots,\gamma_{i-2},-\gamma_{i-1},\varepsilon)\geq\frac{n-2}{2},\]
with equality on the right if $\varepsilon$ is the preferred orientation for the central expression. If $n>4$ then this is in any case not equal to $1$ which is a contradiction. In case $i$ is even, in the weight sequences the signs of all $\gamma_k$ are reversed, whence we arrive at the same contradiction.
\end{proof}

\subsection{Case III}

Let $f_i,h_i$ and $g_i$ denote the basic and fiber edges of the underlying graph of $\Gamma$ as in the previous section. It remains to treat the case where the weights of $f_i$ and $h_{i}$ do not agree for some $i$. Let $\Gamma'$ be a signed GKM structure on the unsigned GKM graph of $\Gamma$ satisfying the above property.

\begin{lem}\begin{enumerate}[(i)]\label{lem:twistedlem}
\item The weights of $f_i$ and $h_i$ do not agree for any $i=1,\ldots,n$.
\item The vector $(k_1,\ldots,k_n)$ corresponding to the original signed fibration $\Gamma\rightarrow B$ in the sense of Section \ref{sec:graphs in dim 6} satisfies $k_i=\pm 1$ for $i=1,\ldots,n$.
\item If $k_i$ and $k_{i+1}$ have the same sign, then the weights of $f_i$ and $f_{i+1}$ in the signed structure $\Gamma'$ both agree or both disagree with the weights in $\Gamma$.
\end{enumerate}
\end{lem}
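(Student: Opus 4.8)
The plan is to analyse a compatible connection $\nabla'$ of $\Gamma'$ by examining its behaviour along the fiber edges $g_i$. Since $\Gamma'$ is carried by the underlying graph of $\Gamma$, write its weights as $\alpha'(f_i)=s_i\gamma_i$ and $\alpha'(h_i)=s_i'\gamma_i$ with $s_i,s_i'\in\{\pm1\}$, where $\gamma_i$ and $\alpha_i=k_i\gamma_{i-1}-k_{i-1}\gamma_i$ are those attached to the original fibration $\Gamma\to B$ as in Section~\ref{sec:graphs in dim 6}; the weight $\alpha'(g_i)$ is $\pm\alpha_i$, but its sign is irrelevant below since all congruences will be taken modulo $\alpha_i$. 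Recall that $B$ being a signed (effective) graph forces $\gamma_{i+n}=\gamma_i$ and $k_0=(-1)^{\eta}k_n$, that all $k_i\neq0$, and hence that $\gamma_{i-1},\gamma_i,\alpha_i$ are pairwise linearly independent; in particular the three $\Gamma'$-weights $s_i\gamma_i$, $-s_{i-1}\gamma_{i-1}$, $\pm\alpha_i$ at a vertex $p_i$ are automatically admissible, so no information is lost in this parametrisation.

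The crucial local point concerns $\nabla'_{g_i}$: it sends $g_i$ to $\overline{g_i}$ and therefore maps the two basic edges at $p_i$ bijectively onto the two basic edges at $q_i$, i.e.\ $\{f_i,\overline{f_{i-1}}\}\to\{h_i,\overline{h_{i-1}}\}$ for $2\le i\le n$, while at the twist edge $g_1$ it maps $\{f_1,\overline{h_n}\}\to\{h_1,\overline{f_n}\}$; in either case this is done either by the \emph{straight} matching $f_i\mapsto h_i$ (resp.\ $f_1\mapsto h_1$) or by the \emph{crossed} matching $f_i\mapsto\overline{h_{i-1}}$ (resp.\ $f_1\mapsto\overline{f_n}$). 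Writing out the compatibility condition $\alpha'(\nabla'_{g_i}x)\equiv\alpha'(x)\mod\alpha_i$ in the basis $\gamma_{i-1},\gamma_i$ and using the pairwise independence just noted, one reads off: \emph{(a)} the straight matching at $g_i$ is possible only if $s_i=s_i'$ \emph{and} $s_{i-1}=s_{i-1}'$; \emph{(b)} the crossed matching at $g_i$ forces $k_i=\pm1$ and yields $s_{i-1}'=\lambda k_i$, $s_i=-\lambda k_{i-1}$, $s_{i-1}=\nu k_i$, $s_i'=-\nu k_{i-1}$ for some $\lambda,\nu\in\{\pm1\}$, so that $s_i\neq s_i'$ is equivalent to $\nu=-\lambda$ (at $g_1$ one has the same relations with $k_{i-1}$ replaced by $k_0$ and with the $f$- and $h$-strands interchanged, the independence of $\gamma_n,\gamma_1,\alpha_1$ being guaranteed by $k_1,k_0\neq0$).

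Now the three assertions follow in order. \emph{(i)}: assume $s_j\neq s_j'$ for some $j$. By (a) the matching at both $g_j$ and $g_{j+1}$ must be crossed, and subtracting the two congruences forced by the crossed matching at $g_{j+1}$ gives $(s_{j+1}-s_{j+1}')\gamma_{j+1}\equiv(s_j-s_j')\gamma_j\mod\alpha_{j+1}$; as $s_j\neq s_j'$ and $\gamma_j$ is independent of $\alpha_{j+1}$, this forces $s_{j+1}\neq s_{j+1}'$. Iterating once around the $n$-gon, using the analogous computation at the twist edge $g_1$, yields $s_i\neq s_i'$ for every $i$. \emph{(ii)}: by (i) the matching is crossed at every $g_i$, so (b) gives $k_i=\pm1$ for all $i$. \emph{(iii)}: applying (b) at $g_{i+1}$ gives $s_i=\nu k_{i+1}$ and $s_{i+1}=-\lambda k_i$ for some $\lambda,\nu\in\{\pm1\}$, and the disagreement $s_{i+1}\neq s_{i+1}'$ from (i) forces $\nu=-\lambda$; hence $s_is_{i+1}=k_ik_{i+1}$. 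Since $k_i,k_{i+1}\in\{\pm1\}$ by (ii), if $k_i$ and $k_{i+1}$ have the same sign then $s_i=s_{i+1}$, which is precisely the statement that $\alpha'(f_i)$ and $\alpha'(f_{i+1})$ agree, resp.\ disagree, with the $\Gamma$-weights at the same time.

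I expect the only real difficulty to be the bookkeeping of orientations together with the effect of the twist: one must track that $g_1$ links the $f$-strand to the $h$-strand, so that its straight/crossed dichotomy involves $f_1$ against $\overline{h_n}$ and the weight $\alpha_1=k_1\gamma_n-k_0\gamma_1$, and one must check at each step that the relevant pair among $\gamma_{i-1},\gamma_i,\gamma_{i+1},\alpha_i,\alpha_{i+1}$ is genuinely independent, which in every case reduces to the nonvanishing of the $k_i$ (and of $k_0=\pm k_n$).
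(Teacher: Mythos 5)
Your proof is correct and follows essentially the same route as the paper: both analyze a compatible connection along the fiber edges, use linear independence to force the ``crossed'' matching of the basic edges once the weights of $f_i$ and $h_i$ disagree somewhere, and then read off the propagation of the disagreement, the constraint $k_i=\pm 1$, and the sign relation in (iii) from the resulting congruences modulo $\alpha_i$. The only cosmetic difference is your explicit sign bookkeeping with $s_i,s_i',\lambda,\nu$ culminating in the product formula $s_is_{i+1}=k_ik_{i+1}$, where the paper argues directly with the chained congruences.
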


\begin{proof}
Let $\nabla'$ be a connection that is compatible with the signed structure $\Gamma'$ and let $\gamma_i',\delta_i'=\pm\gamma_i$ be the weights of $f_i$ and $h_i$. If $\gamma_i'=-\delta_i'$ then necessarily $\nabla'_{g_{i+1}}\overline{f_i}=h_{i+1}$ and $\nabla'_{\overline{g_{i+1}}}\overline{h_i}=f_{i+1}$. This implies
\begin{equation}\label{eq:caseIII1}\gamma_{i+1}'\equiv -\delta_{i}'\equiv\gamma_i'\equiv-\delta_{i+1}'\mod\alpha_{i+1}\end{equation}
and assertion $(i)$ follows.

For the original signed structure $\Gamma$, the equation $\gamma_i'\equiv -\delta_{i+1}'\mod \alpha_{i+1}$ in \eqref{eq:caseIII1} translates to $\gamma_i=\pm \gamma_{i+1}\mod \alpha_{i+1}$. This congruence in the lattice $\mathbb{Z}_\mft^*$ can only be solved if up to sign we have $\alpha_{i+1}=\gamma_i\pm\gamma_{i+1}$ because $\gamma_i$ and $\gamma_{i+1}$ form a basis of $\mathbb{Z}_\mft^*$. In particular $k_i,k_{i+1}=\pm 1$ is necessary which proves $(ii)$.

More specifically, if $k_i$ and $k_{i+1}$ have the same sign then $\pm\alpha_{i+1}=\gamma_i-\gamma_{i+1}$ and the only solvable congruence of the above form is $\gamma_i=\gamma_{i+1}\mod\alpha_{i+1}$. Thus if $\gamma_i'=\gamma_i$ then $\gamma'_i\equiv \gamma_{i+1}'\mod\alpha_{i+1}$ (see \eqref{eq:caseIII1}) implies $\gamma_{i+1}'=\gamma_{i+1}$. Analogously $\gamma_i'=-\gamma_i$ implies $\gamma_{i+1}'=-\gamma_{i+1}$.
\end{proof}

By assumption $\Gamma$ has the maximal number of interior fixed points in the sense of Section \ref{sec:graphs in dim 6} and thus there is only one spot $j\in\{1,\ldots,n\}$ for which $k_j\neq k_{j-1}$ (setting $k_0=-k_n$). Fixing this $j$, Lemma \ref{lem:twistedlem} implies that regarding basic edges $\Gamma'$ has the form
\begin{center}
\begin{tikzpicture}

\draw[very thick] (-7,1) -- ++(4,0) -- ++(0,-2) --++(-4,0)--++(0,2);
\draw[very thick] (-5,1) -- ++(0,-2);
\draw[very thick] (-3,1) -- ++(2,0);
\draw[very thick] (-3,-1) -- ++(2,0);
\draw[very thick] (-1,1) -- ++ (4,0) -- ++(0,-2) --++(-4,0) --++(0,2);
\draw[very thick] (1,1) --++(0,-2);
\draw[dotted, thick] (3,1)--++(2,0);
\draw[dotted, thick] (3,-1)--++(2,0);
\draw[dotted, thick] (-7,1)--++(-2,0);
\draw[dotted, thick] (-7,-1)--++(-2,0);

\node at (-6,1.3){$-\gamma_{j-2}$};
\node at (-6,-1.3){$\gamma_{j-2}$};
\node at (-4,1.3){$-\gamma_{j-1}$};
\node at (-4,-1.3){$\gamma_{j-1}$};
\node at (-2,1.3){$\gamma_{j}$};
\node at (-2,-1.3){$-\gamma_{j}$};
\node at (0,1.3){$\gamma_{j+1}$};
\node at (0,-1.3){$-\gamma_{j+1}$};
\node at (2,1.3){$\gamma_{j+2}$};
\node at (2,-1.3){$-\gamma_{j+2}$};


\node at (-7,-1)[circle,fill,inner sep=2pt]{};
\node at (-7.3,-1.3){};

\node at (-7,1)[circle,fill,inner sep=2pt]{};
\node at (-7.3,1.3){};

\node at (-5,1)[circle,fill,inner sep=2pt]{};
\node at (-5,1.3){};

\node at (-5,-1)[circle,fill,inner sep=2pt]{};
\node at (-5,-1.3){};

\node at (-3,1)[circle,fill,inner sep=2pt]{};
\node at (-3,1.3){};

\node at (-3,-1)[circle,fill,inner sep=2pt]{};
\node at (-3,-1.3){};

\node at (-1,1)[circle,fill,inner sep=2pt]{};
\node at (-1,1.3){};

\node at (-1,-1)[circle,fill,inner sep=2pt]{};
\node at (-1,-1.3){};
\node at (1,1)[circle,fill,inner sep=2pt]{};
\node at (1,1.3){};

\node at (1,-1)[circle,fill,inner sep=2pt]{};
\node at (1,-1.3){};
\node at (3,1)[circle,fill,inner sep=2pt]{};
\node at (3,1.3){};

\node at (3,-1)[circle,fill,inner sep=2pt]{};
\node at (3,-1.3){};

\end{tikzpicture}
\end{center}
where the horizontal edges are oriented from left to right and the ends (the not depicted positions $1$ and $n+1$) of the ladder are glued in a twisted fashion. Note that interchanging the top and bottom row while applying multiplication with $-1$ in $\mathbb{Z}_\mft^*$ defines an automorphism of $\Gamma'$. Consequently we can assume that the $f_i$ are the horizontal edges in the upper row, the $h_i$ are the horizontal edges in the lower row, and the $g_i$ are the vertical edges emanating from the starting point of $f_i$.

\begin{prop}
In the graph above there is a unique way to define the signs of the weights of the $g_i$ such that there exists a compatible connection. Consequently the signed GKM structure $\Gamma'$ is unique up to isomorphism. Furthermore $\Gamma'$ has only interior fixed points and is not realized by a Hamiltonian action.
\end{prop}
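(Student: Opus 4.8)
The plan is to establish the three assertions in turn. Fix a compatible connection $\nabla'$ on $\Gamma'$. Since the underlying unsigned graph is that of $\Gamma$ we may write $\alpha'(g_i)=s_i\alpha_i$ with $s_i\in\{\pm1\}$, where $\alpha_i=k_i\gamma_{i-1}-k_{i-1}\gamma_i$; by Lemma \ref{lem:twistedlem}(ii) all $k_i=\pm1$, so each $\alpha_i$ is primitive. Exactly as in the proof of Lemma \ref{lem:twistedlem}(i), the transport $\nabla'_{g_i}$ cannot send $f_i$ to $h_i$: since $\alpha'(h_i)=-\alpha'(f_i)$ this would force $2\alpha'(f_i)\equiv 0\bmod\alpha_i$, which is impossible because $\gamma_i$ and $\alpha_i$ are linearly independent. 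Hence $\nabla'_{g_i}$ swaps $f_i\leftrightarrow\overline{h_{i-1}}$ and $\overline{f_{i-1}}\leftrightarrow h_i$; via the signed compatibility $\alpha'(\nabla'_e f)=\alpha'(f)+c\,\alpha'(e)$ this only yields a relation that is satisfied for either value of $s_i$, so the signs must be pinned down by going once around the ladder.

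To do this I would analyse $\nabla'_{f_i}$ on the two non-basic edges at the initial vertex of $f_i$: it is either \emph{straight} ($g_i\mapsto g_{i+1}$, $\overline{f_{i-1}}\mapsto f_{i+1}$) or \emph{crossing} ($g_i\mapsto f_{i+1}$, $\overline{f_{i-1}}\mapsto g_{i+1}$). Using the base relation $\gamma_{i-1}\equiv -\gamma_{i+1}\bmod\gamma_i$ together with the basic-edge labels displayed above, a short computation shows: at every horizontal edge other than $f_{j-1}$ and $f_j$ (where $j$ is the unique position with $k_{j-1}\neq k_j$) the straight option is possible and, in both options, $s_{i+1}=s_i$; at $f_{j-1}$ and at $f_j$ the straight option is impossible — it would require $2\gamma_{j-2}$, respectively $2\gamma_{j-1}$, to be an integer multiple of $\gamma_{j-1}$, respectively $\gamma_j$ — so the connection must cross there, and the crossing equations determine $s_{j-1}$, $s_j$, $s_{j+1}$ absolutely, with $s_j=-s_{j-1}=-s_{j+1}$. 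Propagating around the $n$-gon one obtains $s_i$ for all $i$, uniquely determined by the displayed normalisation. Since any signed structure of the type considered here can, after possibly applying the automorphism of each fibre interchanging its two vertices, be brought into this normalisation of the basic edges, the signs of the $g_i$ are unique and $\Gamma'$ is unique up to isomorphism.

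With all weights determined I would check interiority vertex by vertex. At a vertex of the fibre over $v_i$ the three outgoing weights are $\pm\gamma_{i-1}$, $\pm\gamma_i$ (signs as displayed) and $\pm s_i\alpha_i$; writing the last one in the basis formed by the other two and using that the cone on $e_1$, $e_2$, $ae_1+be_2$ equals $\RR^2$ precisely when $a,b<0$ (as in the proof of Proposition \ref{prop:interior}), one finds in every case — the generic fibres, the fibre over $v_j$, and the fibres adjacent to the twisted gluing — that both coefficients are negative, so the vertex is interior. Thus the cone spanned by the outgoing labels is all of $\mft^*$ at every vertex of $\Gamma'$, and by the obstruction recorded in Remark \ref{rem:symplecticobstruction} such a signed GKM graph admits no linear realisation; in particular $\Gamma'$ is not the GKM graph of a Hamiltonian $T$-action.

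The main obstacle is the case analysis of $\nabla'$ on the horizontal edges: showing that \emph{crossing} is forced precisely at $f_{j-1}$ and $f_j$, and that the crossing equations fix the signs at the neighbouring fibre edges absolutely and not merely up to a global sign. This needs careful bookkeeping of the basic-edge signs across the twist $j$, of which lift of which base edge is glued to which at the twisted junction, and of the signs $\varepsilon_i$ of Remark \ref{rem:correspondencedata}. The interiority verification of the third paragraph is then routine, though somewhat lengthy.
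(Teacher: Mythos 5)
Your proposal is correct and follows essentially the same route as the paper: the sign flip in the basic labels at position $j$ forces the connection to exchange basic and fibre edges along $f_{j-1}$ and $f_j$, which determines the fibre weights there absolutely; the remaining signs are then propagated around the ladder (your observation that both the straight and the crossing option yield $s_{i+1}=s_i$ is a valid substitute for the paper's inductive exclusion of the crossing option), and interiority of every vertex rules out a linear realization and hence a Hamiltonian action. The only step you leave implicit that the paper makes explicit is the verification that the uniquely determined signs actually admit a compatible connection, which the paper settles by writing the connection down.
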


\begin{proof}
Let $\alpha_i'$ denote the weight of $g_i$ in $\Gamma'$. By the choice of $j$ we have $\pm\alpha_j'=\gamma_{j-1}+\gamma_j$ and $\pm\alpha_i'=\gamma_{i-1}-\gamma_i$ for the remaining values $i\neq j$. Observe that since $\gamma_{j-2}\equiv-\gamma_j\mod \gamma_{j-1}$ we cannot have $\nabla'_{f_{j-1}}\overline{f_{j-2}}=f_j$ due to the change of sign in $\Gamma'$. As a consequence we have $\nabla'_{f_{j-1}}\overline{f_{j-2}}=g_j$ which implies $\alpha_j'\equiv\gamma_{j-2}\equiv -\gamma_j\mod\gamma_{j-1}$ and forces $\alpha_j'=-\gamma_{j-1}-\gamma_j$. Analogously one has $\nabla'_{f_j}\overline{f_{j-1}}=g_{j+1}$ and it follows that $\alpha_{j+1}'\equiv\gamma_{j-1}\equiv-\gamma_{j+1}\mod\gamma_j$ forcing $\alpha_{j+1}'=\gamma_j-\gamma_{j+1}$.

We prove inductively that $\alpha_i'=\gamma_{i-1}-\gamma_i$ for $i=j+1,\ldots,n$ and $\alpha_i'=-\gamma_{i-1}+\gamma_i$ for $i=1,\ldots,{j-1}$. We showed this already for $j+1$ and we assume it holds for some $i\in \{j+1,\ldots,n-1\}$. Through the congruence $\alpha_i'\equiv \gamma_{i-1}\equiv -\gamma_{i+1}\not\equiv\gamma_{i+1}\mod\gamma_i$ we see that $\nabla'_{f_i}g_i\neq f_{i+1}$. Consequently, $\nabla_{f_i}g_i=g_{i+1}$ which implies $\alpha'_{i+1}\equiv \gamma_{i-1}-\gamma_i\equiv -\gamma_{i+1}\mod \gamma_i$ and thus $\alpha'_{i+1}=\gamma_i-\gamma_{i+1}$. The rest of the argument is carried out analogously where one first shows that $\nabla'_{f_n}g_n=\overline{g_1}$ and thus $\alpha_1'\equiv -\gamma_{n-1}+\gamma_n\equiv \gamma_1\mod\gamma_n$ due to the twist in our notation. It follows that $\alpha_1'=-\gamma_n+\gamma_1$ and from there on the induction can be continued up to $j-1$.

We have proved that $\Gamma'$ is unique up to isomorphism. One easily checks that a compatible connection $\nabla'$ is indeed given as follows: $\nabla'_{f_i}$ and $\nabla'_{h_i}$ send basic to basic edges except for $i=j-1,j$ where basic and fiber edges are interchanged. Along the $g_i$, the basic edges $\overline{f_i}$ and $f_{i+1}$ get transported to $h_{i+1}$ and $\overline{h_{i}}$. Clearly, all vertices in $\Gamma'$ are interior. In particular it does not have a linear realization in the sense of Section \ref{subsec:introsymplec} and can thus not come from a Hamiltonian action.
\end{proof}

\section{Distinguishing the equivariant homotopy type}

The goal of this section is to show that the equivariant homotopy type of the previous constructions in general depends on the input data. In particular, it will follow that the previously developed methods produce infinite families of pairwise not equivariantly homotopy equivalent examples.

\begin{prop}\label{prop:isooftotalspaces}
Let $\Gamma\rightarrow B$ and $\Gamma'\rightarrow B'$ be two fiberwise signed GKM fibrations as in Section \ref{sec:graphs in dim 6} such that the base graphs have at least $5$ vertices. Then the GKM graphs $\Gamma$ and $\Gamma'$ are isomorphic if and only if the following hold
\begin{itemize}
\item There is a GKM isomorphism $\varphi\colon B\rightarrow B'$.
\item If we fix the data for $B$ as in Remark \ref{rem:correspondencedata} needed to define the correspondence in Proposition \ref{prop:correspondence} and use $\varphi$ to fix the corresponding choices for $B'$, then the elements in $((\mathbb{Z}-0)^n/\pm)\times\{0,1\}$ associated to the fibrations coincide.
\end{itemize}
\end{prop}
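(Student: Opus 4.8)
The statement is an equivalence, and I would treat the two directions separately. The converse direction is short: given a GKM isomorphism $\varphi=(\varphi_0,\Phi)\colon B\to B'$ and, with the reference data on $B'$ chosen as the $\varphi$-image of that on $B$, equality of the elements in $((\mathbb{Z}-0)^n/\pm)\times\{0,1\}$ associated to the two fibrations, one writes down a GKM isomorphism $\Gamma\to\Gamma'$ directly from the explicit description in the proof of Proposition \ref{prop:correspondence}: it acts as $\varphi_0$ on the set of fibers and as $\Phi$ on $\mathbb{Z}^2$. Agreement of the $\{0,1\}$-components guarantees that the two fibrations have the same (product or twisted) graph type, so that the fibers can be matched compatibly, and agreement of the $(\mathbb{Z}-0)^n/\pm$-components is exactly what makes the resulting map label-preserving on fiber edges (on horizontal edges it is automatic, since $\Phi(\gamma_i)$ is by construction the label of the corresponding edge of $B'$). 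Thus the real content lies in the forward direction, whose crux is that the fibration structure on $\Gamma$ -- above all the set of vertical edges -- is \emph{intrinsic} to the bare GKM graph once $n\ge5$.

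To make this precise I would first prove that for $n\ge5$ the underlying graph of $\Gamma$ contains exactly $n$ four-cycles, namely the ``squares'' $\pi^{-1}(e_i)$ of the ladder, that each vertical edge lies in exactly two of them, and that each horizontal edge lies in exactly one. This is a short combinatorial argument: a four-cycle $C\subset\Gamma$ projects to a closed walk of length at most $4$ in the $n$-gon $B$, which, since $B$ has girth $n>4$, must backtrack; using that there is a single vertical edge over each vertex of $B$ (so no two vertical edges of $C$ are consecutive) and exactly two horizontal edges over each edge of $B$ (and only one such from any given vertex of $\Gamma$), one checks case by case that $C$ is forced to be one of the squares $\pi^{-1}(e_i)$. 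Consequently the vertical edges of $\Gamma$ are precisely the edges lying in at least two four-cycles -- a condition phrased purely in the graph $\Gamma$. Hence any GKM isomorphism $(f,g,\Phi)\colon\Gamma\to\Gamma'$ (both base graphs have $\ge5$ vertices by hypothesis, so this recovery applies to $\Gamma'$ as well) maps vertical edges to vertical edges, therefore maps fibers to fibers, and so descends to a graph isomorphism $\varphi_0\colon B\to B'$ of $n$-gons. For $n=4$ this recovery fails -- the product-type total space is then the $1$-skeleton of the $3$-cube, which has two extra four-cycles and a correspondingly larger automorphism group -- so the hypothesis $n\ge5$ is genuinely needed.

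It then remains to see that $\varphi:=(\varphi_0,\Phi)$ is a GKM isomorphism $B\to B'$ and that it matches the two invariants. The first is immediate: for an edge $e$ of $B$ with horizontal lift $\tilde e$ to $\Gamma$, the edge $g(\tilde e)$ is horizontal over $\varphi_0(e)$, so in $\mathbb{Z}^2/\pm$ one has $\alpha_{B'}(\varphi_0(e))=\alpha'(g(\tilde e))=\Phi(\alpha(\tilde e))=\Phi(\alpha_B(e))$. For the invariants, take the reference data on $B'$ transported along $\varphi$. The product/twisted component is preserved because it is an isomorphism invariant of the underlying graph of $\Gamma$ (a prism versus a M\"obius ladder; for each $n$ exactly one of the two is bipartite). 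For the tuple, recall from \eqref{eq:correspondencefiberwiseinbasis} that $k_i$ and $k_{i-1}$ are the coordinates of the fiber weight $\alpha_i$ over $v_i$ in the basis $(\gamma_{i-1},\gamma_i)$; the isomorphism sends the fiber edge over $v_i$ to the one over $\varphi_0(v_i)$ up to orientation, and carries $\alpha_i$, $\gamma_{i-1}$, $\gamma_i$ under $\Phi$ to $\pm\alpha_i'$, $\gamma_{i-1}'$, $\gamma_i'$, so that $k_i\gamma_{i-1}-k_{i-1}\gamma_i=\varepsilon_i\bigl(k_i'\gamma_{i-1}-k_{i-1}'\gamma_i\bigr)$ for some signs $\varepsilon_i\in\{\pm1\}$. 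Since $(\gamma_{i-1},\gamma_i)$ is a basis, this yields $\varepsilon_i k_i'=k_i$ and $\varepsilon_i k_{i-1}'=k_{i-1}$ for every $i$; as the $k_j$ are nonzero the $\varepsilon_i$ must all coincide, hence $[k_1,\dots,k_n]=[k_1',\dots,k_n']$ in $(\mathbb{Z}-0)^n/\pm$.

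The step I expect to be the main obstacle is the first one -- showing that the fibration structure can be read off from the abstract GKM graph $\Gamma$, and isolating exactly which small values of $n$ obstruct this; once the vertical edges are known to be intrinsic, everything else is bookkeeping around Proposition \ref{prop:correspondence}.
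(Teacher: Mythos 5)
Your proposal is correct, and its overall skeleton coincides with the paper's: both directions reduce to showing that for $n\ge 5$ any abstract GKM isomorphism $\Gamma\to\Gamma'$ must preserve the splitting into vertical and horizontal edges, after which the induced map $\varphi_0$ on the $n$-gons, the equality $\eta=\eta'$, and the identity $[k_1,\dots,k_n]=[k_1',\dots,k_n']$ (via Equation \eqref{eq:correspondencefiberwiseinbasis} and the fact that consecutive $\gamma$'s form a basis) all follow by the same bookkeeping you carry out. The one place where you genuinely diverge from the paper is the intrinsic characterization of vertical edges, which you correctly identify as the crux. You count four-cycles: for $n\ge5$ the only four-cycles of the prism $C_n\times K_2$ resp.\ the M\"obius ladder are the $n$ squares $\pi^{-1}(e_i)$, so vertical edges are exactly those lying on two four-cycles, and your case analysis ($h=0,\dots,4$ horizontal edges on the cycle) is sound; your remark that the cube obstructs $n=4$ is also the right counterexample. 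The paper instead distinguishes the two edge types by distances after deleting a pair of edges: for a horizontal edge $e$ there is an $e'$ whose joint removal pushes $d(i(e),t(e))$ up to $n-1\ge4$, while for a vertical edge every such removal leaves two disjoint length-$3$ detours and hence $d=3$. Both invariants are isomorphism-invariant and both fail precisely at small $n$; yours is perhaps the more standard graph-theoretic device, the paper's avoids enumerating cycles. Two cosmetic points: in your displayed identity the right-hand side should read $\varepsilon_i\bigl(k_i'\gamma_{i-1}'-k_{i-1}'\gamma_i'\bigr)$ (the primes on the $\gamma$'s are missing, though the argument that the $\varepsilon_i$ are forced to be constant because all $k_j\neq0$ is exactly right and is, if anything, slightly more careful than the paper's ``choose orientations compatibly with $\tilde\varphi$''); and your bipartiteness argument for $\eta=\eta'$ is a nice shortcut, though it is also immediate from fiber-preservation as in the paper.
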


\begin{rem}
Given $\Gamma\rightarrow B$ as above then as stated before the map in Proposition \ref{prop:correspondence} depends on a fixed enumeration of the vertex set of $B$ and on choices of signs for the first two edges. If we change this data by choosing a different sign for the first or second weight, then the fibration associated to $([k_1,\ldots,k_n],\eta)$ will now correspond to $([-k_1,k_2,\ldots,(-1)^nk_n],\eta)$. Changing the enumeration of the underlying $n$-gon, the $k_i$ get permuted by the corresponding permutation of the dihedral group. Note however that on top of the permutation some additional signs will appear. We leave the details of the exact signs to the interested reader and settle for the slightly suboptimal corollary below.
\end{rem}

\begin{cor}\label{cor:distinguish eq HT}
Let $\Gamma\rightarrow B$ and $\Gamma'\rightarrow B$ be GKM fibrations associated to $([k_1,\ldots,k_n],\eta)$ and $([k_1',\ldots,k_n'],\eta')$. If $n\geq 5$, then in order for geometric realizations of $\Gamma$ and $\Gamma'$ to be equivariantly homotopy equivalent, it is necessary that $\eta=\eta'$ and $[k_1,\ldots,k_n]=[k_1',\ldots,k_n']$ up to signs and permutations from the dihedral group.
\end{cor}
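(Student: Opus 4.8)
The plan is to deduce the corollary from Proposition~\ref{prop:isooftotalspaces} together with the discussion preceding it, the one genuinely new input being that the (unsigned) GKM graph of a realization is an invariant of its equivariant homotopy type. I would prove this first. Suppose $M$ and $M'$ are geometric realizations of $\Gamma$ and $\Gamma'$ which are $T$-equivariantly homotopy equivalent. For any subtorus $S\subseteq T$, passing to $S$-fixed points turns this into an equivariant homotopy equivalence $M^S\simeq_T(M')^S$; in particular, restricting to $T$-fixed points and using that $M^T$ is a finite discrete set, one obtains an honest bijection $M^T\to(M')^T$. The invariant $2$-spheres of $M$ are recovered from the fixed-point sets $M^S$ of subcircles $S\subseteq T$ (as the $2$-dimensional pieces of $M^S$, with $S$ ranging over the kernels of the weights), and the weight of such a sphere is recovered as the primitive covector in $\ZZ_\mft^*$ annihilating the corresponding subcircle, hence up to sign. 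Thus the unsigned GKM graph of a realization, \emph{with its labels in the fixed lattice $\ZZ_\mft^*$}, is determined by the $T$-equivariant homotopy type; consequently $\Gamma\cong\Gamma'$. (Alternatively, one may argue via the $H^*(BT)$-algebra $H^*_T(-)$ and its restriction homomorphism to the fixed points, using that all our realizations have vanishing odd cohomology, cf.\ \cite{1903.11684v1}.)

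Granting this, I would invoke Proposition~\ref{prop:isooftotalspaces}: since $n\ge 5$ and both fibrations have base graph $B$, the isomorphism $\Gamma\cong\Gamma'$ yields a GKM isomorphism $\varphi\colon B\to B$ such that, after transporting the auxiliary data of Remark~\ref{rem:correspondencedata} along $\varphi$, the associated elements of $((\ZZ-0)^n/\pm)\times\{0,1\}$ agree. It then remains to unwind what such a $\varphi$ does to the data: as $B$ is an $n$-gon, $\varphi$ permutes its vertices by an element of the dihedral group, it carries a lattice automorphism which leaves the numbers $k_i$ (being coefficients with respect to a basis) unchanged, and it may alter the chosen signs of $\gamma_1$ and $\gamma_2$. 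By the remark preceding the corollary, the vertex permutation permutes $(k_1,\dots,k_n)$ by the corresponding element of the dihedral group, up to additional signs, and changing the sign choices for the first two weights changes $(k_1,\dots,k_n)$ only by signs; in all cases $\eta$ is unchanged, being the intrinsic product-versus-twisted-type invariant of the graph $\Gamma$. Hence $\eta=\eta'$ and $[k_1,\dots,k_n]=[k_1',\dots,k_n']$ up to signs and a permutation from the dihedral group, as claimed.

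The step I expect to be the main obstacle is the first one: checking cleanly that the unsigned GKM graph, together with the canonical identification of its label lattice with $\ZZ_\mft^*$, is recovered from the equivariant homotopy type --- in particular that the relevant recovery is ``on the nose'', with trivial lattice twist, so that the statement ``$\Gamma\cong\Gamma'$'' (needed as input to Proposition~\ref{prop:isooftotalspaces}) really does follow. Once this is in place, the remaining two steps are routine bookkeeping, which is precisely the content that the remark before the corollary chooses to suppress.
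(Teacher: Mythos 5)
Your overall strategy is the same as the paper's: reduce the statement to ``equivariantly homotopy equivalent realizations have isomorphic GKM graphs'', then feed the resulting isomorphism $\Gamma\cong\Gamma'$ into Proposition \ref{prop:isooftotalspaces} and unwind the bookkeeping via the remark preceding the corollary. The second and third steps of your argument are exactly what the paper does and are fine. The difference, and the problem, lies in the first step. The paper does not attempt a geometric recovery of the graph from fixed-point data; it simply quotes \cite{FranzYamanaka}: realizations of non-isomorphic GKM graphs have non-isomorphic equivariant cohomology algebras, and the $H^*(BT)$-algebra $H^*_T(-)$ is an invariant of the equivariant homotopy type. Your parenthetical ``alternative'' is in fact the paper's actual proof.

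Your main (geometric) argument has a genuine gap: you recover the label of an invariant $2$-sphere only as ``the primitive covector annihilating the subcircle fixing it'', i.e.\ up to sign \emph{and up to integer multiples}. But the axial function of $\Gamma$ is not primitive in general: the fiber labels are $\alpha_i=k_i\gamma_{i-1}-k_{i-1}\gamma_i$ in the basis $(\gamma_{i-1},\gamma_i)$, which is primitive only when $\gcd(k_{i-1},k_i)=1$. An isomorphism of GKM graphs in the sense of Definition \ref{defn:GKM-iso} — which is what Proposition \ref{prop:isooftotalspaces} takes as input — must match the actual labels, multiplicities included. Concretely, for $n\geq 5$ the fibrations corresponding to $([1,\ldots,1],\eta)$ and $([2,\ldots,2],\eta)$ have identical underlying graphs and identical \emph{primitivized} labels, so your recovery procedure cannot tell their realizations apart, yet the corollary asserts that they are not equivariantly homotopy equivalent. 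To close the gap along geometric lines you would have to detect the full, generally disconnected, stabilizer $\ker\alpha_i$ of generic points of the sphere (e.g.\ via the fixed-point sets $M^H$ of finite subgroups $H\subset T$), not just its identity component; as written, the subcircle only sees the primitive part. The clean fix is to promote your parenthetical remark to the actual argument: an equivariant homotopy equivalence induces an isomorphism of $H^*(BT)$-algebras $H^*_T(M)\cong H^*_T(M')$, and by \cite{FranzYamanaka} this forces $\Gamma\cong\Gamma'$ as GKM graphs with their integral (non-primitive) labels, after which your remaining two steps go through verbatim. (The further small points you elide — that components of $M^S$ without $T$-fixed points could occur, and that a homotopy equivalence of $M^S$ only matches components and fixed points rather than spheres — are repairable, but the primitivity issue is not repairable within the argument as stated.)
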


\begin{proof}
It follows from Proposition \ref{prop:isooftotalspaces} as well as the subsequent remark that the conditions in the corollary are necessary in order for $\Gamma$ and $\Gamma'$ to be isomorphic. It is shown in \cite{FranzYamanaka} that realizations of non-isomorphic graphs have non-isomorphic equivariant cohomology algebras which implies the claim.
\end{proof}

\begin{proof}[Proof of Proposition \ref{prop:isooftotalspaces}]
If $\Gamma$ and $\Gamma'$ are isomorphic then in particular they have the same number of vertices. Thus the underlying graphs of $B$ and $B'$ are both $n$-gons for some $n\geq 5$. We argue that an isomorphism $\tilde{\varphi}\colon\Gamma\cong \Gamma'$ has to respect the decomposition into horizontal and vertical edges of the respective fibrations. To see this note that a horizontal edge $e$ has the following property: there is another edge $e'$ (namely the other horizontal edge over the same edge in the base) such that after removing $e$ and $e'$ the shortest path between $i(e)$ and $t(e)$ has length at least $n-1\geq 4$. On the other hand any vertical edge $e$ has the property that, after removing $e$ and any other edge $e'$, the shortest path between $i(e)$ and $t(e)$ has length $3$. Both properties are respected by graph isomorphisms so $\varphi$ indeed respects the decomposition of fiber and basic edges.

It follows that the lift of a path around the $n$-gon $B$ gets mapped by $\tilde{\varphi}$ to the lift of a path around the $n$-gon $B'$. Thus in particular $\Gamma$ and $\Gamma'$ must either be both of twisted type or of product type so $\eta=\eta'$. Since this is true for both possible lifts and their images under  $\tilde{\varphi}$ are connected through fiber edges it follows that $\tilde{\varphi}$ respects pairs of basic edges. This implies that lifting an edge from $B$ to $\Gamma$, mapping it to $\Gamma'$ and pushing it down to $B'$ induces a well defined graph isomorphism $\varphi\colon B\rightarrow B'$.
Since $\tilde{\varphi}$ is a GKM isomorphism there is some automorphism $\psi$ of $\mathbb{Z}_\mft^*$ such that for any edge $e$ in $\Gamma$ we have $\psi(\alpha(e))=\alpha'(\tilde{\varphi}({e}))$, where $\alpha$ and $\alpha'$ denote the axial functions of $\Gamma$ and $\Gamma'$. It follows that if $\tilde{e}$ is an edge in $\Gamma$ over some edge $e\in E(B)$ then \[\psi(\alpha_B(e))=\psi(\alpha(\tilde{e}))=\alpha'(\tilde{\varphi}(\tilde{e}))=\alpha_{B'}(\varphi(e)).\]
Thus $\varphi$ is a GKM isomorphism.

We enumerate the vertices and edges of $B$ and choose weights $\gamma_i$ as in Remark \ref{rem:correspondencedata}. We give $B'$ the enumeration induced by $\varphi$ and choose signs of $\gamma_i'$ such that $\psi(\gamma_i)=\gamma_i'$. If the orientations of the fiber edges are chosen compatibly with $\tilde{\varphi}$ then the corresponding weights $\alpha_i$ and $\alpha_i'$ and the resulting integers $k_i$  and $k_i'$ as in Proposition \ref{prop:correspondence} satisfy
\[k_i'\gamma_{i-1}'-k_{i-1}'\gamma_i'=\alpha_i'=\psi(\alpha_i)=k_i\psi(\gamma_{i-1})-k_{i-1}\psi(\gamma_i).\]
Thus $k_i=k_i'$ which proves one direction of the proposition. Conversely one easily checks that given $\varphi\colon B\rightarrow B'$ satisfying the conditions of the proposition, any graph isomorphism $\tilde{\varphi}\colon \Gamma\rightarrow \Gamma'$ covering $\varphi$ is a GKM isomorphism.
\end{proof}

\section{Cohomology and characteristic classes}
\label{sec:charclasses}
This section is devoted to compute the cohomology ring as well as the Chern classes of the realizations of the GKM fibrations. All cohomology rings in this section are with respect to integer coefficients.

First we remind the reader how the Chern classes of the total space of a projectivized bundle are computed in terms
of the Chern classes of the bundle and the base. Note that
usually this is done in case the bundle $E \to X$ is a holomorphic vector bundle over some complex
manifold $X$. But the same computations work in case of a $4$-dimensional quasitoric base manifold or $S^4$.

Assume that $X$ is a $4$-dimensional stably almost complex manifold and $E \to X$ a complex
vector bundle of rank $2$, and denote by $\pi\colon \mathbb{P}(E) \to X$ the canonical projection.
The vertical distribution $V\subset T \mathbb{P}(E)$ is a complex vector bundle (cf.\ Section
\ref{sec: sacs}), thus $\mathbb{P}(E)$ has a stable almost complex structure induced by the
decomposition $T \mathbb{P}(E) = V \oplus\pi^\ast(TX)$. Let $L \to \mathbb{P}(E)$ be the relative
tautological bundle, i.e., it restricts to every $\CC\PP^1$-fiber of $\pi \colon \mathbb{P}(E) \to X$
to the tautological bundle over this fiber. From \cite[p. 270
(20.7)]{MR658304} we have that the cohomology ring $H^*(\mathbb{P}(E))$ is the quotient
ring of the polynomial ring $H^*(X)[x]$ by the ideal $I$ generated by
\[
  x^2 +c_1(E)x + c_2(E)
\]
where $x := c_1(\overline L) \in H^*(\mathbb{P}(E))$ and $\overline L$ denotes the dual bundle of
$L$. We obtain

\begin{prop}\label{prop:Chern classes projec}
  The Chern classes of the stable almost complex structure of $\mathbb{P}(E)$ in the ring
   $H^\ast(X)[X]/I$ are given by
  \[
    c_1(\mathbb P(E)) = c_1(X) + c_1(E) + 2x
  \]
  and
  \[
    c_2(\mathbb P(E)) = c_2(X)
    + c_1(E)c_1(X)+2 c_1(X)x.
  \]
  Clearly $c_3(\mathbb P(E))$ is determined by the Euler characteristic of $\mathbb{P}(E)$
  which is equal twice the Euler characteristic of $X$.
\end{prop}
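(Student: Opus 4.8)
The plan is to compute the total Chern class of $T\PP(E)$ from the splitting $T\PP(E)=V\oplus\pi^\ast TX$ of Section~\ref{sec: sacs} (where $V$ is the vertical tangent bundle of the $\CC\PP^1$-bundle $\pi\colon\PP(E)\to X$) together with the Whitney sum formula, which is valid at the level of stable complex bundles. The only nontrivial ingredient is the total Chern class of the line bundle $V$, which I would extract from the relative Euler sequence of $\pi$.

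First I would record the tautological exact sequence of complex bundles $0\to L\to\pi^\ast E\to Q\to 0$ on $\PP(E)$, where $L$ is the tautological subbundle and $Q$ the (line bundle) quotient, and the standard identification $V\cong\Hom(L,Q)=\overline L\otimes Q$. From the exact sequence, $\pi^\ast c_1(E)=c_1(L)+c_1(Q)$ and $\pi^\ast c_2(E)=c_1(L)c_1(Q)$; since $c_1(L)=-x$ by the definition $x=c_1(\overline L)$, this gives $c_1(Q)=\pi^\ast c_1(E)+x$ (and, as a consistency check, recovers the relation $x^2+c_1(E)x+c_2(E)=0$ generating $I$). As $V$ is a complex line bundle this yields $c(V)=1+c_1(\overline L)+c_1(Q)=1+\pi^\ast c_1(E)+2x$. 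Since $X$ is $4$-dimensional, $c(TX)=1+c_1(X)+c_2(X)$, so
\[
c(T\PP(E))=\bigl(1+\pi^\ast c_1(E)+2x\bigr)\bigl(1+\pi^\ast c_1(X)+\pi^\ast c_2(X)\bigr),
\]
and comparing homogeneous components gives the asserted formulas for $c_1(\PP(E))$ and $c_2(\PP(E))$, together with $c_3(\PP(E))=\pi^\ast c_2(X)\,(\pi^\ast c_1(E)+2x)$ in degree $6$.

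Finally, $\PP(E)$ is a closed connected oriented $6$-manifold, so $H^6(\PP(E);\ZZ)\cong\ZZ$ and $c_3(\PP(E))$ is pinned down by its value on the fundamental class; when $TX$, and hence $T\PP(E)$, carries an honest almost complex structure --- in particular when $X$ is toric --- this value is the topological Euler number $\chi(\PP(E))=\chi(\CC\PP^1)\cdot\chi(X)=2\chi(X)$, by multiplicativity of the Euler characteristic in a fibration. The one point that needs care is the stable-versus-honest complex bookkeeping: $V$ is genuinely complex, whereas $\pi^\ast TX$ is only stably complex when $X=S^4$ or a non-complex quasitoric manifold, so the Whitney computation above is to be read among stable Chern classes, and the identification of $c_3$ with the topological Euler class is legitimate precisely when an honest almost complex structure is present. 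The remaining manipulations are routine expansions.
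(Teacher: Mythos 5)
Your proposal is correct and follows essentially the same route as the paper: both split $T\PP(E)=V\oplus\pi^\ast TX$ and extract $c(V)$ from the relative Euler sequence, the only cosmetic difference being that you compute $c(V)$ via the identification $V\cong\overline L\otimes Q$ with $Q=\pi^\ast E/L$, whereas the paper reads it off as $c(\pi^\ast E\otimes\overline L)$ (the two are equivalent, since tensoring the tautological sequence with $\overline L$ yields the Euler sequence). Your closing caveat that the top Chern class of a merely stable complex structure need not be the Euler class is a fair point the paper glosses over, but it does not affect the formulas for $c_1$ and $c_2$.
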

\begin{proof}
The relative Euler sequence \cite[Remark 2.4.5]{MR2093043} holds also in this setting,
  i.e. we have a short exact sequence of complex vector bundles
\[
  0 \longrightarrow \underline \CC \longrightarrow \pi^*(E) \otimes \overline L
  \longrightarrow V \longrightarrow 0,
\]
where $\underline\CC$ is the trivial vector bundle. Let $c$ denote the total Chern class,
  then $c(\mathbb{P}(E)) = \pi^*(c(X))c(V)$ in $H^\ast(\mathbb{P}(E))$. From the relative
  Euler sequence we infer $c(V) = c(\pi^{*}(E) \otimes \overline L)$ and using the splitting
  priciple we obtain for the tensor product
  \[
    c_1(\pi^*(E)\otimes \overline L) = \pi^\ast(c_1(E)) + 2c_1(\overline L).
  \]
  as well as
  \[
    c_2(\pi^\ast(E) \otimes \overline L) = \pi^\ast(c_2(E)) + \pi^\ast(c_1(E))c_1(\overline L)
    +c_1(\overline L)^2.
  \]
  Identifying now $H^\ast(\mathbb{P}(E))$ with $H^\ast(X)[x]/I$ we compute
  \[
    c_1(\mathbb{P}(E)) = c_1(X) +c_1(E) +2x
  \]
  and
  \begin{align*}
    c_2(\mathbb{P}(E)) &= c_1(E)c_1(X) + 2c_1(X)x + c_2(E) + c_1(E)x +x^2 +c_2(X)\\
    &= c_1(E)c_1(X) + 2c_1(X)x + c_2(X),
  \end{align*}
  where we used that $c_2(E) + c_1(E)x + x^2$ is zero in $H^\ast(X)[x]/I$.
\end{proof}

We suppose $X$ is a quasitoric manifold of dimension $4$ o $S^4$ and $T$ the $2$-torus acting on $X$.
We use the notation from Remark \ref{rem:correspondencedata} for the GKM graph of $X$. We denote by $\delta_i$ the element of $\bigoplus_{i=1}^n
H^\ast(BT) = H^\ast_T(X^T)$ which is zero, except at $v_i$, where it is equal
to $1 \in H^\ast(BT)$. We extend the notation to $\delta_{i+n}=\delta_{i}$. From \cite[Theorem 7.7]{MasudaPanov} we infer that the
equivariant cohomology of $H_T^\ast(X)\subset H_T^\ast(X^T)$ is generated by
\begin{align*}
  \beta_i&:=-\gamma_{i-1}\delta_i + \gamma_{i+1}\delta_{i+1}
\end{align*}
for $i=1,\ldots,n$ (the $\beta_i$ are, in the language of \cite[Section 6.2]{MasudaPanov}, the Thom classes of the two-dimensional submanifolds corresponding to the edges in the GKM graph of $X$).

\begin{lem}\label{L:First Chern class}
Let $E \to X$ be a $T$-equivariant complex vector bundle of rank $2$ satisfying condition $(i)$ of Theorem \ref{thm:vectorbundle} (for some $(a_1,\ldots,a_n),(k_1,\ldots,k_n)\in \mathbb{Z}^n$, $\eta\in\{0,1\}$) and
denote by $c_i^T(E) \in H_T^\ast(X;\ZZ)$ the integral $i$-th equivariant Chern class of $E \to X$.
The image of $c_1^T(E)$ under the inclusion map $H_T^\ast(X) \to H_T^\ast(X^T)$ is
given by
\[
  \sum_{i=1}^{n} (k_i-2a_i)\beta_i
\]
  and that of $c_2^T(E)$  by
  \[
    \sum_{i=1}^n (2a_ia_{i-1}-a_{i-1}k_i-a_ik_{i-1})\beta_{i-1}\beta_i + \sum_{i=1}^{n} (a_i^2 -a_ik_i)\beta_i^2
  \]
where we set $a_{0} = (-1)^{\varepsilon_1} a_n+\eta k_0$, $k_{0}=(-1)^{\varepsilon_1+\eta} k_n$ and $\beta_{0}=(-1)^{\varepsilon_1}\beta_n$.
\end{lem}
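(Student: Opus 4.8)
The plan is to compute everything after restriction to the fixed point set. Since $X$ (a $4$-dimensional quasitoric manifold, or $S^4$ when $n=2$) has vanishing odd cohomology and torsion-free even cohomology, it is equivariantly formal and the restriction homomorphism
\[
H_T^\ast(X;\ZZ)\longrightarrow H_T^\ast(X^T;\ZZ)=\bigoplus_{i=1}^n H^\ast(BT)
\]
is injective (this is also part of the setup in \cite{MasudaPanov}). The classes $c_1^T(E)$, $c_2^T(E)$ and the proposed right hand sides all lie in $H_T^\ast(X)$ (the latter being polynomials in the $\beta_i$), so it suffices to compare their images at each fixed point $v_i$.

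First I recall the general fact that the restriction of the $j$-th equivariant Chern class of a $T$-equivariant complex bundle to a fixed point $v$ is the $j$-th elementary symmetric polynomial in the weights of the representation $E_v$ (it is the Chern class of $E_v\times_T ET\to BT$). By condition $(i)$ of Theorem \ref{thm:vectorbundle}, $E_{v_i}$ is the sum of the two one-dimensional representations with weights
\[
u_i=a_i\gamma_{i-1}-a_{i-1}\gamma_i,\qquad u_i'=(a_i-k_i)\gamma_{i-1}+(k_{i-1}-a_{i-1})\gamma_i,
\]
where for $i=1$ one uses the conventions $a_0=(-1)^{\varepsilon_1}a_n+\eta k_0$, $k_0=(-1)^{\varepsilon_1+\eta}k_n$ and the extension of the $\gamma_i$ from Remark \ref{rem:correspondencedata}. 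Thus $c_1^T(E)|_{v_i}=u_i+u_i'=(2a_i-k_i)\gamma_{i-1}+(k_{i-1}-2a_{i-1})\gamma_i$ and $c_2^T(E)|_{v_i}=u_iu_i'$. On the other side, from $\beta_i=-\gamma_{i-1}\delta_i+\gamma_{i+1}\delta_{i+1}$ one reads off $\beta_i|_{v_i}=-\gamma_{i-1}$, $\beta_{i-1}|_{v_i}=\gamma_i$ and $\beta_j|_{v_i}=0$ otherwise, where the convention $\beta_0=(-1)^{\varepsilon_1}\beta_n$ makes this consistent at $i=1$. Hence $\beta_j^2|_{v_i}$ is supported on $j\in\{i-1,i\}$ with values $\gamma_i^2,\gamma_{i-1}^2$, while $\beta_{j-1}\beta_j|_{v_i}$ is supported on $j=i$ with value $-\gamma_{i-1}\gamma_i$.

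Substituting these into the proposed formulas yields at $v_i$ exactly $(2a_i-k_i)\gamma_{i-1}+(k_{i-1}-2a_{i-1})\gamma_i$ for the first, and $(a_i^2-a_ik_i)\gamma_{i-1}^2+(a_ik_{i-1}+a_{i-1}k_i-2a_ia_{i-1})\gamma_{i-1}\gamma_i+(a_{i-1}^2-a_{i-1}k_{i-1})\gamma_i^2$ for the second, and a direct expansion of $u_iu_i'$ matches the latter term by term. By injectivity of the restriction map both identities follow. The computation is routine; the only place that requires real care is the index bookkeeping at the seam $i=1$, namely checking that the conventions for $a_0,k_0,\beta_0$ together with the extension of the $\gamma_i$ are mutually compatible so that all three fixed-point formulas above hold at $v_1$ with precisely these signs.
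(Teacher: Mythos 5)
Your proof is correct and follows essentially the same route as the paper: both arguments reduce to the fact that the fixed-point restrictions of $c_j^T(E)$ are the elementary symmetric polynomials in the weights from Theorem \ref{thm:vectorbundle}(i), the only organizational difference being that the paper resums the resulting $\delta_i$-expression into the $\beta_i$'s while you restrict the $\beta_i$-expression back to the fixed points, which is the same componentwise comparison in $\bigoplus_i H^*(BT)$. The seam verification at $v_1$ that you defer does go through with the stated conventions (one checks $k_0-2a_0=(-1)^{\varepsilon_1}(k_n-2a_n)$ and $a_0^2-a_0k_0=a_n^2-a_nk_n$, using $\eta^2=\eta$ and $(2\eta-1)(-1)^{\eta}=-1$), so nothing essential is missing.
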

\begin{proof}
  The image of the total Chern class $c^T(E)$ in $H_T^\ast(X^T)$ is given by (see \cite[Proposition 5.3]{1903.11684v1})
  \[
    \sum_{i=1}^{n} (1+\alpha_{i1})(1+\alpha_{i2})\delta_i,
  \]
  where $\alpha_{ij}$ are the weights of the $T$-representation on the fiber over the fixed point $v_i$. By abuse of notation we write also $c^T(E) \in H_T^\ast(X^T,\ZZ)$
  for the image of $c^T(E)$ under the inclusion homomorphism.
  By assumption the weights $\alpha_{ij}$ over $v_{i}$ are given by
  \[
    \alpha_{i1} = a_i\gamma_{i-1} -a_{i-1}\gamma_{i},\quad
    \text{and}
    \quad
    \alpha_{i2} = (a_i-k_{i})\gamma_{i-1} + (-a_{i-1}+k_{i-1})\gamma_{i}.
  \]
  and therefore
\[
  c_1^T(E) = \sum_{i=1}^{n} \left( (2a_i-k_i)\gamma_{i-1} + (-2a_{i-1}+k_{i-1})\gamma_i \right)\delta_i.
\]
  Using that $k_0 = (-1)^{\eta+\varepsilon_1} k_n$ (c.f. Section \ref{sec:constr-sec}) we compute
  \begin{align*}
    &c_1^T(E)\\ &= (2a_1-k_1)\gamma_0\delta_1 + (-2((-1)^{\varepsilon_1}a_n+\eta k_0)+k_0)\gamma_1\delta_1 + \sum_{i=2}^{n} \left( (2a_i-k_i)\gamma_{i-1} + (k_{i-1}-2a_{i-1})\gamma_i \right)\delta_i \\
    &=(2a_1-k_1)\gamma_0\delta_1 - (2a_n -(2\eta + (-1)^{\eta})k_n)\gamma_{n+1}\delta_{n+1} + \sum_{i=2}^{n} \left( (2a_i-k_i)\gamma_{i-1} + (k_{i-1}-2a_{i-1})\gamma_i \right)\delta_i \\
    &=
    (2a_1-k_1)\gamma_0\delta_1 - (2a_n-k_n)\gamma_{n+1}\delta_{n+1} + (2a_n-k_n)\gamma_{n-1}\delta_n
    - (2a_1-k_1)\gamma_2\delta_2 - \sum_{i=2}^{n-1} (2a_i-k_i)\beta_i\\
    &= \sum_{i=1}^{n} (k_i -2a_i)\beta_i.
  \end{align*}
  The second Chern class is given by
    \begin{align*}
      c_2^T(E) &= \sum_{i=1}^{n} (a_i\gamma_{i-1} - a_{i-1}\gamma_i)((a_i-k_i)\gamma_{i-1} +(-a_{i-1}+k_{i-1})\gamma_i)\delta_i \\
      &= \sum_{i=1}^{n} \left( (a_i^2 - a_ik_i)\gamma_{i-1}^2   + (-2a_ia_{i-1}+a_{i-1}k_i+a_ik_{i-1})\gamma_{i-1}\gamma_i
      +(a_{i-1}^2 - a_{i-1}k_{i-1})\gamma_i^2\right)\delta_i.
    \end{align*}
    Let us examine the middle term first. We have
      \begin{align*}
        \sum_{i=1}^n (-2a_ia_{i-1}+a_{i-1}k_i+a_ik_{i-1})\gamma_{i-1}\gamma_i\delta_i=
\sum_{i=1}^n (2a_ia_{i-1}-a_{i-1}k_i-a_ik_{i-1})\beta_{i-1}\beta_i
      \end{align*}
where $\beta_0:=-\gamma_{-1} \delta_n+\gamma_1 \delta_1=-(-1)^{\varepsilon_1}\gamma_{n-1}\delta_n+(-1)^{\varepsilon_1}\gamma_{n+1}\delta_n=(-1)^{\varepsilon_1}\beta_n$.
      The remaining terms can be rearranged as follows
      \begin{align*}
        \sum_{i=1}^{n} &\left( (a_i^2-a_ik_i)\gamma_{i-1}^2 + (a_{i-1}^2 - a_{i-1}k_{i-1})\gamma_i^2  \right)\delta_i\\
      &=\sum_{i=1}^{n} (a_i^2 -a_ik_i)\gamma_{i-1}^2 \delta_i +  \sum_{i=1}^{n-1} (a_i^2 -a_ik_i)\gamma_{i+1}^2 \delta_{i+1}
        +(a_0^2-a_0k_0)\gamma_1^2\delta_1 \\
        &=\sum_{i=1}^{n-1} (a_i^2-a_ik_i)\beta_i^2 +  (a_n^2-a_nk_n)\gamma_{n-1}^2\delta_n
        + (a_0^2-a_0k_0)\gamma_1^2\delta_1 \\
        &= \sum_{i=1}^{n} (a_i^2 - a_ik_i)\beta_i^2
      \end{align*}
    where in the last equality we used that $a_0^2-a_0k_0 = a_n^2+(-1)^{\epsilon_1}a_nk_0 (2\eta-1) = a_n^2 -a_nk_n$.
\end{proof}

The cohomology $H^\ast(X)$ is isomorphic to $H^\ast_T(X)/\left( H^{>0}(BT) \cdot H^\ast_T(X)  \right)$, see the proof of \cite[Lemma 2.1]{MasudaPanov}. We denote by $\overline \beta_i$ the elements in $H^\ast(X)$ which are the images of $\beta_i$ under the projection
map $H_T^\ast(X;\ZZ) \to H^*(X)$. Thus the elements $\overline\beta_i$ generate $H^*(X)$.

When it comes to computing the cohomology of the projectivization of the vector bundles from Theorem \ref{thm:vectorbundle} we note that the result does not depend on $a=(a_1,\ldots,a_n)$: the cohomology is completely determined by the GKM graph, on which $a$ has no effect. Thus we may set $a=0$ (note that then $a_0 = \eta k_0$) and apply \ref{L:First Chern class} to obtain

\begin{cor}\label{C:Cohomology of Pr}
  Let $E \to X$ be a $T$-equivariant complex
  vector bundle of rank $2$ as in Theorem \ref{thm:vectorbundle}. Then we have
  \[
    H^*(\mathbb{P}(E)) = H^*(X)[x]/\langle x^2 + \left(\sum_{i=1}^{n} k_i\overline\beta_i\right)x + \eta k_nk_1 \overline \beta_1 \overline\beta_n  \rangle
  \]
\end{cor}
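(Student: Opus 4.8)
The plan is to combine the Borel--Hirzebruch presentation of $H^*(\mathbb{P}(E))$ recalled just before Proposition \ref{prop:Chern classes projec} with the explicit computation of the equivariant Chern classes in Lemma \ref{L:First Chern class}. That presentation reads $H^*(\mathbb{P}(E)) = H^*(X)[x]/\langle x^2 + c_1(E)x + c_2(E)\rangle$ with $x = c_1(\overline L)$, so the whole content of the corollary is to identify the \emph{non-equivariant} classes $c_1(E), c_2(E) \in H^*(X)$ in terms of the generators $\overline\beta_i$; once this is done, the displayed presentation is immediate.

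First I would reduce to the case $a = 0$. By Theorem \ref{thm:vectorbundle}(ii) the GKM graph of $\mathbb{P}(E)$ is $\Gamma$ for every choice of $(a_1,\dots,a_n)$; indeed the difference of the two fiber weights over $v_i$ in Theorem \ref{thm:vectorbundle}(i) is $k_i\gamma_{i-1} - k_{i-1}\gamma_i = \alpha_i$, independent of $a$, and the basic edges carry the weights $\gamma_i$ regardless of $a$. Since $\mathbb{P}(E) \to X$ is a $\mathbb{CP}^1$-bundle and $x$ restricts to a generator of $H^2$ of each fiber, Leray--Hirsch gives $H^{\mathrm{odd}}(\mathbb{P}(E)) = 0$, so $\mathbb{P}(E)$ is a GKM manifold with vanishing odd cohomology and its cohomology ring is determined by its GKM graph. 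Hence $H^*(\mathbb{P}(E))$ does not depend on $a$, and I may compute with $a = 0$ (so that $a_0 = \eta k_0$). Now Lemma \ref{L:First Chern class} gives $c_1^T(E) = \sum_{i=1}^n k_i\beta_i$ at once, and in the formula for $c_2^T(E)$ every summand involving some $a_i$ with $1 \le i \le n$ vanishes, leaving only the $i=1$ term $-\eta k_0 k_1\,\beta_0\beta_1$ of the first sum. Substituting $\beta_0 = (-1)^{\varepsilon_1}\beta_n$ and $k_0 = (-1)^{\varepsilon_1 + \eta}k_n$ turns this into $-\eta(-1)^{\eta} k_n k_1\,\beta_1\beta_n$, and since $-\eta(-1)^\eta = \eta$ for $\eta \in \{0,1\}$ this equals $\eta k_n k_1\,\beta_1\beta_n$. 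Applying the surjection $H^*_T(X) \to H^*(X)$, which sends $\beta_i$ to the generators $\overline\beta_i$, yields $c_1(E) = \sum_{i=1}^n k_i\overline\beta_i$ and $c_2(E) = \eta k_n k_1\,\overline\beta_1\overline\beta_n$, and the claimed presentation follows by feeding these into the Borel--Hirzebruch formula.

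The argument is essentially bookkeeping. The one step that genuinely needs justification is the reduction to $a = 0$ — equivalently, the insensitivity of $\mathbb{P}(E)$ to twisting $E$ by a line bundle — which I package above via GKM theory and the vanishing of odd cohomology. The other point to watch is the correct handling of the $\eta$-twisted identifications $\beta_0$, $k_0$, $a_0$ arising from the monodromy of $\Gamma \to B$; beyond these sign conventions I expect no obstacle.
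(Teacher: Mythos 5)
Your proposal is correct and follows essentially the same route as the paper: reduce to $a=0$ by observing that $H^*(\mathbb{P}(E))$ is determined by the GKM graph $\Gamma$, which is independent of $a$, then feed the $a=0$ specialization of Lemma \ref{L:First Chern class} (with the sign bookkeeping $\beta_0=(-1)^{\varepsilon_1}\beta_n$, $k_0=(-1)^{\varepsilon_1+\eta}k_n$, $a_0=\eta k_0$) into the presentation $H^*(X)[x]/\langle x^2+c_1(E)x+c_2(E)\rangle$. Your Leray--Hirsch justification of the independence from $a$ is slightly more explicit than the paper's one-line remark, but the argument is the same.
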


Any $T$-invariant (stably) almost complex structure on $X$ induces such a structure on $\mathbb{P}(E)$, so that the decomposition $T \mathbb{P}(E)
= V \oplus \pi^*(TX)$ is a decomposition of stable almost complex vector bundles, cf.
 Theorem \ref{thm:mainthm} and Section \ref{sec: sacs}. Recall that at least a stably almost
 complex structure always exists in case $X$ is quasi toric or $S^4$. We wish to compute the Chern classes
 of the resulting structure on $\mathbb P(E)$ using Proposition \ref{prop:Chern classes projec}.
 Therefore we first have to determine the Chern classes of $X$, which we will conduct using
 equivariant cohomology. For simplicity we will restrict to the case of an almost complex
 structure on $X$ and the choice of the $\gamma_i$ will be assumed to be that of the resulting
 signed GKM structure. In this case the equivariant Chern class of $X$ is given by
 \[
   c^T(X) = \sum_{i=1}^{n} (1-\gamma_{i-1})(1+\gamma_i)\delta_i.
 \]
 An easy computation shows that
 \[
   c_1^T(X) = \sum_{i=1}^{n} \beta_i
 \]
 and
 \[
   c_2^T(X) = \sum_{i<j}^{} \beta_i\beta_j.
 \]
 When it comes to the vector bundles from Theorem \ref{thm:vectorbundle}, note that we may again assume $a=0$ for the computation since the Chern classes depend only on the GKM graph. In total we obtain

\begin{prop}\label{prop: chern classes of projectivization}
 If $X$ is almost complex and $E\rightarrow X$ is as in Theorem \ref{thm:vectorbundle}, then the Chern classes of $\mathbb{P}(E)$ expressed in the Chow ring are given by
  \[
    c_1(\mathbb{P}(E)) = \sum_{i=1}^{n} (k_i+1)\overline\beta_i +2x
  \]
and
\[
  c_2(\mathbb{P}(E)) = \sum_{i<j}^{} \overline\beta_i\overline\beta_j + \sum_{i,j}^{} k_i\overline\beta_i\overline\beta_j + 2 \sum_{i=1}^{n} \overline\beta_i x
\]
\end{prop}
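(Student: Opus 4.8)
The plan is to deduce both formulas directly from Proposition~\ref{prop:Chern classes projec} together with the explicit equivariant Chern class computations carried out just above the statement. Recall that Proposition~\ref{prop:Chern classes projec} expresses, in $H^\ast(X)[x]/I$,
\[
c_1(\mathbb{P}(E)) = c_1(X) + c_1(E) + 2x,\qquad
c_2(\mathbb{P}(E)) = c_2(X) + c_1(E)c_1(X) + 2c_1(X)x,
\]
so it suffices to substitute the values of $c_1(X)$, $c_2(X)$ and $c_1(E)$ in the ring $H^\ast(X)$.

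For the base, the preceding computation gives $c_1^T(X) = \sum_{i=1}^n\beta_i$ and $c_2^T(X) = \sum_{i<j}\beta_i\beta_j$ in $H_T^\ast(X^T)$; applying the projection $H_T^\ast(X)\to H^\ast(X)$, which sends $\beta_i$ to $\overline\beta_i$, yields $c_1(X) = \sum_{i=1}^n\overline\beta_i$ and $c_2(X) = \sum_{i<j}\overline\beta_i\overline\beta_j$. For the bundle $E$, I would first invoke the observation (already recorded before the statement) that $H^\ast(\mathbb{P}(E))$ and hence all Chern classes of the induced stable almost complex structure depend only on the GKM graph $\Gamma\to B$, not on the auxiliary parameters $(a_1,\dots,a_n)$; this lets us set $a=0$, so that $a_0=\eta k_0$. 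Lemma~\ref{L:First Chern class} then specializes to $c_1^T(E)=\sum_{i=1}^n k_i\beta_i$, hence $c_1(E)=\sum_{i=1}^n k_i\overline\beta_i$ in $H^\ast(X)$.

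Substituting these expressions into the two displayed identities gives
\[
c_1(\mathbb{P}(E)) = \sum_{i=1}^n\overline\beta_i + \sum_{i=1}^n k_i\overline\beta_i + 2x = \sum_{i=1}^n (k_i+1)\overline\beta_i + 2x,
\]
and, using that $\bigl(\sum_i k_i\overline\beta_i\bigr)\bigl(\sum_j\overline\beta_j\bigr)=\sum_{i,j}k_i\overline\beta_i\overline\beta_j$,
\[
c_2(\mathbb{P}(E)) = \sum_{i<j}\overline\beta_i\overline\beta_j + \sum_{i,j}k_i\overline\beta_i\overline\beta_j + 2\sum_{i=1}^n\overline\beta_i\,x,
\]
which is exactly the claim. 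The only points that genuinely require care are the justification that the Chern classes are independent of $a$ — which is what permits the clean $a=0$ specialization of Lemma~\ref{L:First Chern class} — and correctly carrying the twisted identifications $\beta_0=(-1)^{\varepsilon_1}\beta_n$ and $k_0=(-1)^{\varepsilon_1+\eta}k_n$ through the invocation of that lemma and of the equivariant computation of $c^T(X)$. Beyond this bookkeeping, the argument is a routine substitution, so I do not anticipate a real obstacle in this proposition itself; the substantive work was already done in Proposition~\ref{prop:Chern classes projec} and Lemma~\ref{L:First Chern class}.
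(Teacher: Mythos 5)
Your proposal is correct and follows essentially the same route as the paper: substitute the equivariant computations $c_1^T(X)=\sum_i\beta_i$, $c_2^T(X)=\sum_{i<j}\beta_i\beta_j$ and the $a=0$ specialization $c_1^T(E)=\sum_i k_i\beta_i$ of Lemma \ref{L:First Chern class} into Proposition \ref{prop:Chern classes projec} and project to $H^\ast(X)$. The one point worth phrasing carefully is that the independence of the Chern classes from $a$ follows not from the cohomology ring alone but from the fact that the equivariant Chern classes of the stable almost complex structure are determined by their restrictions to the fixed points, i.e.\ by the weights recorded in the GKM graph, which is exactly the justification the paper intends.
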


Finally we would like to show in a special case, that the homotopy type of $\mathbb{P}(E)$
depends on $k$. We will consider the discriminant
of the symmetric trilinear form given by the triple cup product on integer cohomology in dimension
$2$, cf. \cite[Section 3.1 and 5.2]{MR1365849}: Let $N$ be a
simply-connected, closed and orientable $6$-manifolds such that its second Betti number is equal to
$2$. Choose a basis $(e_1,e_2)$ of $H^2(N)$ and an orientation homology class $[N] \in
H_6(N)$. Consider the following integers using the cup product of $H^*(N)$
\[
  n_0 := \langle e_1^3,[N]  \rangle,\quad n_1:=\langle e_1e_2^2, [N] \rangle,\quad
  n_2 := \langle e_1^2e_2, [N] \rangle,\quad n_3 := \langle e_2^3, [N] \rangle.
\]
The number
\[
  \Delta_N := (n_0n_3-n_1n_2)^2 - 4(n_0n_2 -n_1^2)(n_1n_3 - n_2^2)
\]
is invariant under the action of $\textrm{GL}(2,\ZZ)$ on $H^2(N)$ and does not depend on the chosen
orientation. Thus it represents an invariant of the homotopy type of $N$.

Let us now assume $X = \CC\PP^{2}$ with the standard action of $T^2$ and denote by $B$ the GKM graph of $X$.
Consider a signed GKM fibration $\Gamma\rightarrow B$ corresponding to $(k_1,k_2,k_3,\eta)$ in the sense of \ref{prop:correspondence} (with respect to some choice of data in $B$ as in Remark \ref{rem:correspondencedata}). We assume that the fibration is of twisted type i.e.\ $\eta=1$.
Let $E_k$ be a $T^2$-equivariant complex vector bundle as in Theorem \ref{thm:vectorbundle} with
$(a_1,\ldots,a_n)=0$ such that $\mathbb P(E)\rightarrow X$ realizes $\Gamma\rightarrow B$ and
denote
by $\mathbb{P}_k$ the projectivization of $E_k$. Note that in $H^*(\CC\PP^{2})$ we have
$\overline \beta_1= \overline\beta_2=\overline\beta_3$ and $\overline\beta_1\overline\beta_2 =
\overline\beta_2\overline\beta_3=\overline\beta_1\overline\beta_3$. From
\cite[Proposition 17]{MR1365849} it follows that
\[
  \Delta_{\mathbb{P}_k} = c_1(E_k)^2 -4c_2(E_k)
\]
when interpreting the Chern classes in $H^*(\mathbb{CP}^2)$ as integers. Thus using Lemma \ref{L:First Chern class} we obtain
\[
  \Delta_{\mathbb{P}_k} = (k_1+k_2+k_3)^2 - 4\eta \cdot k_1k_3
\]
which proves that one obtains infinitely many different homotopy types, when varying $k$. Note that the discussion on the non-Kählerness of the action in Section \ref{sec:nonkaehler} only cared about the signs of the $k_i$. Thus we have the following
\begin{prop}\label{p:lotsofhamt2 hell yeah}
There are infinitely many homotopy types among compact simply-connected $6$-dimensional manifolds which carry a Hamiltonian GKM $T^2$-action with $6$ fixed points but do not admit an invariant K\"ahler structure.
\end{prop}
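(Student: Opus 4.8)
The plan is to exhibit an explicit infinite family among the manifolds $\PP(E)$ constructed above, all carrying a Hamiltonian non-K\"ahler $T^2$-action with six fixed points, and to tell infinitely many of them apart by the homotopy invariant $\Delta$. Concretely, I would take $X=\CC\PP^2$ with its standard $T^2$-action and let $B$ be its GKM graph, which is a triangle ($n=3$) and the boundary of a two-dimensional Delzant polytope. For each integer $m\geq 1$ let $\Gamma_m\to B$ be the signed GKM fibration of twisted type corresponding to $([1,m,1],1)$ under Proposition \ref{prop:correspondence}, and let $\pi\colon\PP(E_m)\to X$ be a geometric realization as in Theorem \ref{thm:mainthm}, with parameters $a_i=0$ as in Section \ref{sec:charclasses}. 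Since $B$ bounds a Delzant polytope, part (3) of Theorem \ref{thm:mainthm} equips $\PP(E_m)$ with a $T^2$-invariant symplectic form making the action Hamiltonian, whose underlying symplectic manifold is moreover K\"ahler. Each $\PP(E_m)$ is compact, $6$-dimensional, and simply connected (being a $\CC\PP^1$-bundle over the simply connected $\CC\PP^2$), with $2n=6$ fixed points, one for each vertex of $\Gamma_m$.

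Next I would check that the hypotheses of Theorem \ref{thm:signed-classification} are met. As $k_1=k_3=1$ and $k_2=m$ all have the same sign, Proposition \ref{prop:interior} shows that the fibers over $v_2$ and $v_3$ each contain exactly one interior vertex (since $\mathrm{sign}\,k_1=\mathrm{sign}\,k_2$ and $\mathrm{sign}\,k_2=\mathrm{sign}\,k_3$), whereas both vertices in the fiber over $v_1$ are exterior, because the criterion there asks $k_3$ and $(-1)^1k_1$ to have the same sign, which fails. Hence $\Gamma_m$ has exactly $n-1=2$ interior vertices, and as $n=3\neq 4$ the corollary to Theorem \ref{thm:signed-classification} applies: no $T^2$-invariant K\"ahler structure exists on $\PP(E_m)$.

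Finally I would separate infinitely many of the $\PP(E_m)$ up to homotopy. By Corollary \ref{C:Cohomology of Pr}, $\PP(E_m)$ has $b_2=2$ --- generated by the class of $H^2(\CC\PP^2)$ together with the relative tautological class --- so it is a simply connected closed orientable $6$-manifold to which the discriminant $\Delta$ of the triple cup product form on $H^2$ applies as a homotopy invariant. The computation at the end of Section \ref{sec:charclasses}, via Lemma \ref{L:First Chern class}, gives
\[
\Delta_{\PP(E_m)}=(k_1+k_2+k_3)^2-4\eta\,k_1k_3=(m+2)^2-4=m(m+4),
\]
which is strictly increasing in $m\geq 1$ and hence takes infinitely many values. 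Since $\Delta$ is a homotopy invariant, the family $\{\PP(E_m)\}_{m\geq 1}$ realizes infinitely many distinct homotopy types, each carrying a Hamiltonian $T^2$-action with six fixed points and no invariant K\"ahler structure. I do not expect a real obstacle here: all the substantive input --- the realization theorem, the non-K\"ahler classification, and the Chern class computation --- is already in place, and the only care needed is the bookkeeping that the tuples $([1,m,1],1)$ lie in the regime of the maximal number of interior vertices and that $\Delta$ is unbounded on the family.
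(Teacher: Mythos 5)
Your proposal is correct and follows essentially the same route as the paper: realize twisted-type fiberwise signed fibrations over the GKM graph of $\CC\PP^2$ via Theorem \ref{thm:mainthm}, rule out invariant K\"ahler structures with Theorem \ref{thm:signed-classification} (using Proposition \ref{prop:interior} to count interior vertices), and distinguish homotopy types by $\Delta_{\PP_k}=(k_1+k_2+k_3)^2-4\eta k_1k_3$. The only difference is that you specialize to the explicit family $k=(1,m,1)$, where $\Delta=m(m+4)$ is visibly injective, whereas the paper leaves $k$ general; this is a harmless concretization, and your sign checks for the interior-vertex count are exactly the bookkeeping the paper's remark alludes to.
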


\begin{rem}\label{rem:connectedstabs}
Note that the stabilizers of the produced examples are connected if and
only if all the $k_i$ are $\pm 1$. One can show that for any GKM fibration
$\Gamma\to B$ of twisted type over the GKM graph $B$ of $\CC \PP^2$ with $k_i=\pm
1$ the GKM graph $\Gamma$ is isomorphic to that of ${\mathrm{SU}}(3)/T^2$ or of
Eschenburg's twisted flag manifold ${\mathrm{SU}}(3)//T^2$, cf.\ Examples \ref{ex:flag} and \ref{ex:twflag}. Thus, among the infinitely many examples of Hamiltonian $T^2$-actions we
constructed for Proposition \ref{p:lotsofhamt2 hell yeah}, the only one with connected stabilizers and without invariant compatible K\"ahler structure is the original
Hamiltonian non-K\"ahler example due to Tolman. An example in which not
all $k_i$ are $\pm 1$ is given as follows:
\begin{center}
\begin{tikzpicture}
\draw[step=1, dotted, gray] (-5.5,-4.5) grid (3.5,4.5);

\draw[very thick] (1,3) -- ++(-5,0) -- ++(6,-6) -- ++(0,3) -- ++(-1,3);
\draw[very thick] (2,-3)--++(-1,+2)--++(0,4)++(0,-4)--++(-1,1)--++(2,0)++(-2,0)--++(-4,3);
  \node at (1,3)[circle,fill,inner sep=2pt]{};

  \node at (-4,3)[circle,fill,inner sep=2pt]{};

  \node at (1,-1)[circle,fill,inner sep=2pt]{};

  \node at (0,0)[circle,fill,inner sep=2pt]{};

  \node at (2,0)[circle,fill,inner sep=2pt]{};

  \node at (2,-3)[circle,fill,inner sep=2pt]{};

\end{tikzpicture}
\end{center}
\end{rem}

\bibliographystyle{acm}
\bibliography{projectiveGKM}

\end{document}